\documentclass{amsart}

\usepackage[letterpaper,top=2cm,bottom=2cm,left=3cm,right=3cm,marginparwidth=1.75cm]{geometry}

\usepackage[utf8]{inputenc}
\usepackage{amsmath}
\usepackage{mathtools}   
\usepackage{amsfonts}
\usepackage{listings}
\usepackage{cancel}
\usepackage{comment}
\usepackage{mathabx}
\usepackage{amssymb}
\usepackage{tikz}
\usepackage{subfig}
\usepackage{float}
\usetikzlibrary{arrows, automata, decorations.markings}
\usepackage{hyperref}
\usepackage{verbatim}

\usepackage{todonotes}

\theoremstyle{plain}   						
\newtheorem{theorem}{Theorem}[section]			
\newtheorem{lemma}[theorem]{Lemma} 		 	
\newtheorem{proposition}[theorem]{Proposition}

\DeclareMathOperator{\sech}{sech}
\theoremstyle{remark} 
\newtheorem{remark}{Remark}[section]

\theoremstyle{definition}
\newtheorem{definition}{Definition}[section]
\numberwithin{equation}{section}

\title[]{Confinement and orbital stability of solitons \\ of the NLS equation
on metric graphs}

\author[]{Martino Caliaro}
\address{Gran Sasso Science Institute, L'Aquila, Italy}
\email{martino.caliaro@gssi.it}

\author[]{Diego Noja}
\address{Dipartimento di Matematica e Applicazioni, Universit\`a
 di Milano Bicocca,  via R. Cozzi 55, 20126, Milano, Italy}
\email{diego.noja@unimib.it} 

\thanks{Corresponding author: Diego Noja, e-mail: diego.noja@unimib.it}

\date{\today}

\begin{document}

\begin{abstract}

We study the behavior of untrapped soliton states for the subcritical, time-dependent focusing NLS equation on a large family of non-compact metric graphs with Kirchhoff boundary conditions. We give first our results for metric graphs characterized by a topological assumption (``Assumption~H'') which rules out the existence of a ground state for all members of the class, with a single exception: the bubble-tower metric graph. We present two main results.
First, we show that if the initial datum is close (in the energy norm) to a soliton placed on a single half-line of the graph and sufficiently far from the nearest vertex, then the corresponding solution remains confined to the same half-line for all times, and close to the soliton, up to a remainder small in the energy norm. As a nontrivial application, this yields reflection of a slow soliton upon collision with the compact core of the graph, a phenomenon that first we prove and then we further investigate numerically.
Second, for the exceptional case of bubble-tower graphs, we prove that the existing ground state is orbitally stable. We emphasize that this example does not allow a direct application of the Cazenave--Lions orbital stability argument. Finally, we show, with several examples, how the ideas and methods developed here extend beyond the class of metric graphs satisfying Assumption~H, which turns out to be an instance of a more general energy threshold phenomenon. In particular, we treat the meaningful case of the line in the presence of a smooth potential or a delta interaction.

\end{abstract}

\maketitle

\begin{footnotesize}
 \emph{Keywords:} Non-linear Schr\"odinger equation; Metric graphs; Solitons; Stability of standing waves \\
 \emph{MSC 2020:} 35J10, 81Q35, 34B45, 35Q51, 35Q55, 37K45 .
 \end{footnotesize}

\section{Introduction}

The study of the NLS equation on metric graphs has experienced substantial growth over the last decade. A rigorous analysis was probably initiated by \cite{ACFN11}, where the behavior of fast solitons of the cubic NLS equation after collision with the vertex of a three-edge star graph under several boundary conditions was investigated, in analogy with the seminal study on the line in \cite{HMZ07}. 
In the subsequent years, research in the area has focused almost exclusively on the stationary NLS equation, describing the amplitude of bound states, i.e. periodic localized solutions of the NLS equation. This has led to a large body of results on the existence and properties of ground states (corresponding to several definitions and obtained by several methods) and on the existence of various classes of standing waves. In some cases, their orbital stability and instability has been established, essentially as a by-product of the variational or spectral analysis of the stationary problem and its linearization. These results rely on the methods originating from the seminal works of Cazenave and Lions \cite{cazenave_lions} and of Grillakis, Shatah and Strauss \cite{Grillakis88, GSS87}, together with later developments (see, e.g., \cite{KNP22} and references therein for a review). 

In this paper we firstly provide results on the time behavior of untrapped, soliton-like solutions of the NLS equation on a broad and well-defined class of non-compact metric graphs with finitely many edges, whose importance was highlighted in \cite{adami_2}, \cite{adami}. The graphs we consider are characterized by the property that every point lies on a trail (i.e., a sequence of distinct edges) containing two half-lines. This topological assumption is called Assumption~H in \cite{adami_2},\cite{adami}; it is shown there that Assumption~H implies the generic absence of an NLS ground state (defined as a minimizer of the NLS energy at fixed $L^2$ mass), with a single exception: the so-called \emph{tower of bubbles}, for which the ground state exists and can be exhibited explicitly. This class is therefore large enough to contain both the generic no-ground-state regime and the unique exceptional case with a ground state and it offers a natural and robust setting. At the same time, as we will discuss in the last section, the confinement argument can be extended well beyond Assumption~H and it reveals a more fundamental threshold phenomenon. We now come to the description of our results.

\par
First, in Proposition~\ref{main_prop_1}, without distinguishing between the two cases, we show that if an initial datum is supported on a single half-line of the graph and is close in the energy norm to a truncated soliton, then along the NLS flow it remains confined to the same half-line and stays close (up to a suitable rephasing and translation) to the truncated soliton for all times.
This statement resembles orbital stability, except that the standard definition compares solutions with an actual solitary/standing wave, whereas here the solution is compared with a reference function (a soliton on the half-line plus a small error on the rest of the graph), to which no genuine solitary wave corresponds. The idea is that a soliton placed on a half-line sufficiently far from the compact core (so that the truncation error is small in the energy norm) still effectively behaves as a configuration that is stable for the NLS dynamics, in an appropriate and rigorous sense.  A further piece of information contained in Proposition~\ref{main_prop_1} is that the initial datum can be chosen so that the corresponding soliton-like solution never comes closer than a fixed \emph{a priori} distance to the compact core of the metric graph (see Proposition~\ref{main_prop_1} for the precise formulation).

For comparison, a related result was recently proved for the KdV equation on the half-line with Dirichlet boundary condition (see \cite{CavMun19} and the subsequent \cite{CavMun23, CCCav26} the latter treating also different b.c.). Note that a half-line is a very special metric graph, and the Dirichlet condition presumably prevents the existence of an exact solitary wave, as remarked by the authors of those papers. Their strategy uses an adaptation of the ideas in \cite{MMT02}, based on the introduction of certain almost-conserved quantities and their monotonicity properties. Here, instead, we consider the NLS equation in the substantially more general setting of metric graphs. Our proofs exploit a version of concentration--compactness adapted to non-compact metric graphs with finitely many edges (in particular, covering all graphs satisfying Assumption~H), developed in \cite{CFN17}, combined with a contradiction argument. Roughly speaking, if the claim were false along a sequence of initial data, the corresponding evolutions would produce a minimizing sequence. Such a minimizing sequence could either converge (which is ruled out by the absence of ground states) or be \emph{runaway}, i.e.\ escape to infinity along a single half-line carrying all the mass. The runaway scenario is also excluded, since it would imply a false inequality, yielding a contradiction. In the exceptional tower of bubbles case, where mass and energy alone are not sufficient, we introduce an additional functional $F$ (see \eqref{F}) that is almost conserved along minimizing sequences; this allows us to conclude, again by contradiction.

A relevant example of initial datum satisfying the hypotheses of Proposition~\ref{main_prop_1} is that of a slow soliton placed far from the vertex and moving toward it, discussed in Subsection~\ref{SlowS}. In this case, the main result implies that the solution remains on the same half-line and stays close to a soliton in the energy norm, if the velocity initially present is below a critical velocity: slow solitons are reflected, as shown in Proposition \ref{prop:slow-solitons}. This behavior is reminiscent of the so-called \emph{quantum reflection} of matter-wave solitons, well known in the physics of Bose--Einstein condensates and matter wave solitons (see, e.g., \cite{Cornish08,Brand06, ErnstBrand2010}). Beyond the rigorous result above, the quantum-reflection phenomenon is further investigated numerically in Section~\ref{conclusions}. In particular, we show that the kinetic energy of the incoming soliton attains its maximum at the collision time, a markedly non-classical behavior.\\
We also stress that analogous results on confinement, orbital stability and slow-soliton reflection hold for the more standard case of the subcritical NLS on the line with a repulsive potential; this is proved in the final part of the paper (see Proposition~\ref{Line}), where we also compare our approach with the existing literature on the subject. This model, and the other discussed in Section~6, show that confinement and reflection of slow solitons have to be interpreted as a general threshold energy phenomenon and they are not restricted to metric graphs in the class $H$. We find that the latter remains however a convenient setting, useful to exhibit the relevant behaviors in both the situations of absence or presence of ground state.
\par
Our second main result concerns the tower of bubbles (see the definition in Subsection~\ref{BBL} and Figure~\ref{fig:1}). This is the only subfamily within class~H that admits an NLS ground state. The general expectation is that ground states are orbitally stable, following the celebrated argument of Cazenave and Lions \cite{cazenave_lions}. Here, however, one must take into account that a key assumption in \cite{cazenave_lions} is the relative compactness of all minimizing sequences in the energy space. This fails for tower of bubbles graphs, because there exist minimizing sequences that escape along a single half-line (runaway sequences in the terminology above; see \cite{CFN17} and Subsection~\ref{sect: concentration_comp}), preventing convergence. To overcome this difficulty and prove orbital stability of the ground state (see Proposition~\ref{main_prop_2}), we again exploit the functional $F$ introduced above, showing that instability of the ground state would contradict the continuity of the map $t\mapsto F(\xi(t))$ for $t\in\mathbf{R}$, where $\xi$ denotes the solution of the time-dependent NLS equation.

\par\noindent
The paper is organized as follows. In Section~2 we collect preliminary notation and background on metric graphs, introduce several constructions used later, and recall variational properties of minimizing sequences and of the NLS equation on metric graphs. Most of this material is known but scattered in the literature; here we present it in a compact and unified way. In Section~3 we focus on the class of non-compact metric graphs under consideration, including its definition and basic properties. In Section~4 we prove the first main result on confinement of solitons, stated as Proposition~\ref{main_prop_1}; the dynamics of slow solitons is also discussed there, see Proposition \ref{prop:slow-solitons}. In Section~5 we prove orbital stability of ground states for tower-of-bubbles graphs, stated as Proposition~\ref{main_prop_2}. In Section~6 we discuss how the hypotheses of the presented results can be weakened in various directions and we provide some open problems. Finally, we present numerical simulations of collision and reflection of a slow soliton at the vertex of a three-edge star graph, illustrating the results of Section~4. The simulations use the package QGLAB recently developed in \cite{goodman}.

\section{Preliminaries}
\subsection{Quantum graphs}
We begin by introducing some tools which are necessary to deal with the NLS equation on metric graphs.\\ 
We consider a metric graph $\mathcal{G} = (V,E)$, where $V$ is the set of vertices and $E$ is the set of edges. We will always consider finite graphs, so that the cardinalities $|V|$ and $|E|$ are finite. Each edge $e \in E$ has an associated length $L_e \in (0,+\infty]$ and it is identified with the interval $I_e = [0,L_e]$, if $L_e$ is finite, or with the interval $I_e=[0,+\infty)$, if $L_e$ is infinite. An edge of finite length is called bounded edge, while an edge of infinite length is called half-line. If the graph $\mathcal{G}$ possesses a half-line, we say that $\mathcal{G}$ is non-compact; we say $\mathcal{G}$ is compact if it has no half-lines. To every bounded edge we associate two vertices, and to every half-line we associate a single vertex. On each interval $I_e$ we fix a coordinate $x$, such that the points $x=0$ and $x = L_e$ correspond to vertices if $L_e< +\infty$; if instead $L_e = +\infty$, the point $x = 0$ corresponds to the unique vertex associated to the half-line $e$. When a vertex $\underline{v} \in V$ belongs to an edge $e \in E$, we write $\underline{v} \in e$. We denote by $\{e \prec \underline{v}\}$ the set of edges connecting the vertex $\underline{v} \in V$. In the following, we will denote points on the graph with $\underline{x} =(e,x)$, where $e \in E$ identifies the edge and $x \in I_e$ the coordinate on the corresponding edge. Given two points $\underline{x}$ and $\underline{y}$ on $\mathcal{G}$, we define the distance $d(\underline{x},\underline{y})$ as infimum of the length of the paths connecting the two points. With this distance, the couple $(\mathcal{G},d)$ is a locally compact metric space. We call \textit{trail} a finite sequence of consecutive edges $e \in E$ in which no edge is repeated (a vertex, instead, may be repeated, for example in presence of loops). A \textit{path} is a trail in which no vertex is repeated.  We say that a graph is connected if for every pair of vertices $\underline{v_1},\underline{v_2} \in V$, there exists a path connecting the two. If a graph is not connected, we say it is disconnected.\\
A function $\psi: \mathcal{G} \to \mathbf{C}$ can be regarded as a family of functions $\{\psi_e\}_{e \in E}$ with $\psi_e: I_e \to \mathbf{C}$. In particular, if $\underline{x} = (e,x)$, then
\begin{equation}
    \psi(\underline{x}) = \psi_e(x).
\end{equation}
At convenience, we will also use the notation $(\psi)_e = \psi_e$ for $e \in E$.
The spaces $L^q(\mathcal{G})$, $1\leq q\leq \infty$, are made of functions $\psi$ on $\mathcal{G}$ such that $\psi_e \in L^q(I_e)$ for every $e \in E$ and we define
\begin{equation}
    ||\psi||^q_{L^q(\mathcal{G})} = \sum_{e \in E} ||\psi_e||^q_{L^q(I_e)}, \ 1 \leq q < \infty \quad \text{and} \quad ||\psi||_{L^{\infty}(\mathcal{G})} = \max_{e \in E} ||\psi_e||_{L^{\infty}(I_e)}
\end{equation}
The space $L^2(\mathcal{G})$ forms a Hilbert space, with the scalar product defined in the natural way.
We denote by $C(\mathcal{G})$ the set of functions which are continuous on $\mathcal{G}$, and we introduce the spaces
\begin{equation}
    H^1(\mathcal{G}) = \{ \psi \in C(\mathcal{G}) \ \text{s.t.} \ \psi_e \in H^1(I_e), \forall e \in E\}
\end{equation}
equipped with the norm
\begin{equation}
    ||\psi||^2_{H^1(\mathcal{G})} = \sum_{e \in E}||\psi_e||^2_{H^1(I_e)}
\end{equation}
and 
\begin{equation}
    H^2(\mathcal{G}) = \{ \psi \in C(\mathcal{G}) \ \text{s.t.} \ \psi_e \in H^2(I_e), \forall e \in E\}
\end{equation}
equipped with the norm
\begin{equation}
    ||\psi||^2_{H^2(\mathcal{G})} = \sum_{e \in E}||\psi_e||^2_{H^2(I_e)}.
\end{equation}
We recall the Gagliardo-Nirenberg inequality for metric graphs \cite{adami}, which we shall use in the case of connected non-compact graphs. Notice that the constants $C,C'>0$ appearing in the statement do not depend on the structure of the non-compact graph.
\begin{proposition}[Gagliardo-Nirenberg inequality]
    Let $\mathcal{G}$ be a connected non-compact graph. If $q\in [2,+\infty)$, there exists $C>0$ such that
    \begin{equation}
        ||\psi||^q_{L^q(\mathcal{G})} \leq C ||\psi'||^{\frac{q}{2}-1}_{L^2(\mathcal{G})}||\psi||^{\frac{q}{2}+1}_{L^2(\mathcal{G})}
        \label{gagliardo-nirenberg}
    \end{equation}
    for any $\psi \in H^1(\mathcal{G})$.  Moreover, there exists $C'>0$ such that
\begin{equation}
||\psi||^2_{L^{\infty}(\mathcal{G})} \leq C' ||\psi'||_{L^2(\mathcal{G})}||\psi||_{L^2(\mathcal{G})},
\end{equation}
for any $\psi \in H^1(\mathcal{G})$.
\end{proposition}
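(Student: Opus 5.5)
The plan is to prove the $L^\infty$ estimate first and then obtain the $L^q$ bound by interpolation. The constants must be independent of the graph, so the whole argument should rest on a single one-dimensional fact that does not care how the edges are joined. A convenient choice is the symmetric (or decreasing) rearrangement onto the half-line, but a more elementary route works. Fix $\psi\in H^1(\mathcal{G})$ with $\psi\neq 0$; by density we may assume $\psi$ has compact support. Since $\mathcal{G}$ is connected and non-compact, every point $\underline{x}$ can be joined by a path to the vertex of some half-line and then to infinity along that half-line. Call this path $\gamma$; it is a simple curve isometric to $[0,+\infty)$ starting at $\underline{x}$, and it passes through each edge at most once.

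Along $\gamma$ the function $|\psi|^2$ is absolutely continuous and vanishes at infinity. Hence
\begin{equation*}
|\psi(\underline{x})|^2=-\int_\gamma \frac{d}{ds}|\psi|^2\,ds\le 2\int_\gamma|\psi||\psi'|\,ds\le 2\,\|\psi\|_{L^2(\mathcal{G})}\|\psi'\|_{L^2(\mathcal{G})},
\end{equation*}
where we use Cauchy--Schwarz and the fact that $\gamma$ covers each point of $\mathcal{G}$ at most once. Taking the supremum over $\underline{x}$ gives the second inequality with $C'=2$. This constant does not depend on the graph structure: we only used that the graph is connected and contains a half-line. Density of compactly supported functions in $H^1(\mathcal{G})$ extends the bound to all of $H^1(\mathcal{G})$. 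This is done by truncating with a cutoff on each half-line, which gives convergence in $H^1$ and uniform convergence.

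For $q\in[2,+\infty)$ we write $\|\psi\|^q_{L^q}\le\|\psi\|_{L^\infty}^{q-2}\|\psi\|_{L^2}^2$ and insert the $L^\infty$ bound:
\begin{equation*}
\|\psi\|_{L^q}^q\le (C')^{\frac{q-2}{2}}\,\|\psi'\|_{L^2}^{\frac q2-1}\,\|\psi\|_{L^2}^{\frac q2-1}\,\|\psi\|_{L^2}^2 .
\end{equation*}
The right-hand side is exactly $C\|\psi'\|^{\frac q2-1}_{L^2}\|\psi\|^{\frac q2+1}_{L^2}$ with $C=(C')^{(q-2)/2}$, which depends only on $q$.

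The main obstacle is the step that produces the path $\gamma$. We need a path from an arbitrary point, possibly in the interior of a bounded edge, that reaches infinity without reusing edges. This follows from connectedness: take a shortest path from $\underline{x}$ to the vertex of a half-line, and then append that half-line. Such a path repeats no vertex, so in particular it does not run along the half-line twice. A minor technical point is that $|\psi|^2$ restricted to $\gamma$ lies in $W^{1,1}$. This holds because $\psi$ is continuous at the vertices, so the edge pieces glue together into an $H^1$ function on $[0,+\infty)$.
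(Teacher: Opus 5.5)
Your proof is correct. The paper does not prove this proposition at all: it recalls it from \cite{adami}, adding only that the constants are independent of the graph. Your argument is therefore a self-contained replacement rather than a reconstruction.

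Compare it with the rearrangement route you mention at the outset. There one replaces $|\psi|$ by its decreasing rearrangement $\psi^*$ on $\mathbf{R}_+$ and observes that on a connected non-compact graph every level $t\in(0,\|\psi\|_{L^\infty})$ is attained at least once. This gives the P\'olya--Szeg\H{o} inequality $\|(\psi^*)'\|_{L^2(\mathbf{R}_+)}\le\|\psi'\|_{L^2(\mathcal{G})}$, and one then applies the half-line inequalities. Your geodesic-ray argument is far more elementary. It uses only the fundamental theorem of calculus along one injective curve and Cauchy--Schwarz. It also produces the explicit constant $C'=2$, which is already optimal on $\mathbf{R}_+$ (equality for $e^{-x}$).

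What it gives up is sharpness for finite $q$. Interpolating through $L^\infty$ yields $C=2^{(q-2)/2}$, whereas rearrangement transfers the optimal half-line constant for every $q$ at once. The same device, symmetric rearrangement onto $\mathbf{R}$ (available under Assumption H), also gives the lower bound in Proposition~\ref{prop: energy}. For the uses made here, namely boundedness from below of $E$ on $H^1_\mu(\mathcal{G})$ and the vanishing of $\|u_{0,n}\|_{L^p(\mathcal{B})}$, any graph-independent constant suffices.

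Two small points. First, the density step is unnecessary, since each $\psi_e\in H^1(\mathbf{R}_+)$ already tends to $0$ at infinity. Second, if $\underline{x}$ lies on a half-line you should take $\gamma$ to be the tail of that half-line. The recipe ``shortest path to a half-line vertex, then append that half-line'' may otherwise double back along the same edge, although this would only cost a factor $2$ in the constant.
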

\subsection{Sub-graphs}
\label{sub-graphs}
In the following we will consider the restriction of a function $u \in H^1(\mathcal{G})$ to a sub-graph of $\mathcal{G}$. A sub-graph $\mathcal{B} = (E',V')$ will be formed from a subset of the vertices and edges of $\mathcal{G}$, and will be possibly disconnected. The vertex subset $V'$ will include only the vertices connected to an edge of the subset $E'$. For $u \in H^1(\mathcal{G})$, we use the notation
\begin{equation}
    ||u||^2_{H^1(\mathcal{B})} = \sum_{e \in E'} ||u||^2_{H^1(I_e)},
\end{equation}
and similarly for the other $L^q$-norms, with $1\leq q<\infty$.
We also make use of the following construction. Suppose $\mathcal{G} = (V,E)$ is a connected graph and $\mathcal{T} =(e_1, e_2,  ..., e_n)$ is a trail of $\mathcal{G}$, with $n \in \mathbf{N}$. Suppose that $e_1$ and $e_n$ are half-lines. Call $\underline{v_1}$ the vertex associated to $e_1$. Given $u \in H^1(\mathcal{G})$, our goal is to define a continuous function $u|_{\mathbf{R}} \in H^1(\mathbf{R})$, by restricting  $u$ to the trail $\mathcal{T}$. We proceed as follows. To the edge $e_1$ of the trail, we associate the interval $J_{1} = (-\infty,0]$. To the edge $e_2$ we associate the interval $J_2 = [0,L_{e_2}]$, where $L_{e_2}\geq 0$ is the length of $e_2$. We proceed by consecutive edges in this way, until the entire trail has been covered, except for the half-line $e_n$. We associate to $e_n$ the interval $J_{n} = [l,+\infty]$, where $l \geq 0$ is the total length of the edges $e_2,...,e_{n-1}$. If $u \in H^1(\mathcal{G})$, we define the function $u|_{\mathbf{R}}: \mathbf{R} \to \mathbf{C}$ as follows. For $x \in J_{1}$, we set $u|_{\mathbf{R}}(x) = u_{e_1}(-x)$. If the edge $e_2$ is outgoing from $\underline{v_1}$, we set $u|_{\mathbf{R}}(x) = u_{e_2}(x)$, for $x \in J_{e_2}$. If the edge $e_2$ is incoming in $\underline{v_1}$, we set $u|_{\mathbf{R}}(x) = u_{e_2}(L_{e_2}-x)$. We proceed in a similar way, until we reach the half-line $e_n$. For $x \in J_{n}$, we set  $u|_{\mathbf{R}}(x) = u_{e_n}(x-l)$. We obtain in this way a continuous function $u|_{\mathbf{R}} \in H^1(\mathbf{R})$ such that 
\begin{equation}
    ||u|_{\mathbf{R}}||^2_{H^1(\mathbf{R})} = \sum_{e \in \mathcal{T}}||u_e||^2_{H^1(I_e)},
\end{equation}
and that 
\begin{equation}
    ||u|_{\mathbf{R}}||^q_{L^q(\mathbf{R})} = \sum_{e \in \mathcal{T}}||u_e||^q_{L^q(I_e)}
\end{equation}
for any $1 \leq q < \infty$.
If $\mathcal{B}_{\mathcal{T}}$ represents the graph obtained by removing from $\mathcal{G}$ the edges in the trail $\mathcal{T}$, and the resulting disconnected vertices, we write
\begin{equation}
    ||u||^2_{H^1(\mathcal{G})} = ||u||^2_{H^1(\mathcal{B}_{\mathcal{T}})} + ||u|_{\mathbf{R}}||^2_{H^1(\mathbf{R})},  
\end{equation}
and similarly for the other $L^q$-norms, with $1 \leq q < \infty$.\\
\subsection{NLS equation on graphs}
In this section we recall some properties of the NLS equation on metric graphs. For a more complete treatment we refer to \cite{CFN17}.\\
The NLS equation on a graph $\mathcal{G} = (V,E)$ reads 
\begin{equation}
    i\partial_t\psi = H\psi -|\psi|^{p-2}\psi
    \label{NLS_eq}
\end{equation}
In the following, the power $p$ will be a given number in the interval $(2,6)$, and we refer to this regime as \textit{subcritical}. The operator $H$ in (\ref{NLS_eq}) acts as $$(H \psi)_e = -\psi''_e, \qquad \forall e \in E.$$
The domain of $H$ is defined by means of the \textit{Kirchhoff condition} at the vertices:
\begin{equation}
    D(H) = \{\psi \in H^2(\mathcal{G}), \ \text{s.t.} \ \sum_{e \prec \underline{v}}\partial_o\psi_e(\underline{v}) = 0 \quad \forall \underline{v}  \in V\},
    \label{domain}
\end{equation}
where $\partial_o$ denotes the outgoing derivative from the vertex, and it corresponds to $\partial_x$ or to $-\partial_x$ according to the orientation of the edge. In particular, the operator $(H,D(H))$ is self-adjoint on $L^2(\mathcal{G})$ and the quadratic form associated with $H$ is 
\begin{equation}
    Q[\psi] = \frac{1}{2}||\psi'||^2_{L^2(\mathcal{G})}, \quad \text{with domain} \quad  \mathcal{D}(Q) = H^1(\mathcal{G}).
\end{equation}
Finally, the nonlinearity in equation (\ref{NLS_eq}) is interpreted edge by edge, i.e. $(|\psi|^{p-2}\psi)_e = |\psi_e|^{p-2}\psi_e, \ \forall e \in E. $\\ The equation in (\ref{NLS_eq}) admits two conserved quantities, namely the mass
\begin{equation}
    M(\psi) = ||\psi||^2_{L^2(\mathcal{G})}
    \label{mass}
\end{equation}
and the energy
\begin{equation}
    E(\psi,\mathcal{G}) = \frac{1}{2} ||\psi'||_{L^2(\mathcal{G})}^2 - \frac{1}{p}||\psi||^p_{L^p(\mathcal{G})}
    \label{energy}
\end{equation}
The well-posedness of the Cauchy problem associated to equation (\ref{NLS_eq}) has been proven in  \cite{CFN17}, by applying a Banach fixed point theorem. The result can be summarized as follows.
\begin{proposition}
    Set $2< p < 6$. For any $\psi_0 \in H^1(\mathcal{G})$, there exists a unique global solution $\psi \in C^0(\mathbf{R}, H^1(\mathcal{G})) \cap C^1(\mathbf{R}, H^1(\mathcal{G})^{\star})$ to equation (\ref{NLS_eq}). Moreover,
    \begin{equation}
        E(\psi(t),\mathcal{G}) = E(\psi_0,\mathcal{G}) \quad \text{ and} \quad M(\psi(t)) = M(\psi_0) \qquad \forall t \in \mathbf{R}\ .
    \end{equation}
    \label{prop: cauchy}
\end{proposition}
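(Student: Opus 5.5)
The plan is the standard fixed-point argument for semilinear Schrödinger equations in the energy space, adapted to $\mathcal{G}$. The only graph-specific ingredients are the self-adjointness of $(H,D(H))$ with Kirchhoff conditions and the Gagliardo--Nirenberg inequality \eqref{gagliardo-nirenberg}. We work with the Duhamel formulation
\begin{equation*}
\psi(t)=e^{-iHt}\psi_0+i\int_0^t e^{-iH(t-s)}|\psi(s)|^{p-2}\psi(s)\,ds .
\end{equation*}
Since $H$ is self-adjoint and nonnegative, $e^{-iHt}$ is a unitary group on $L^2(\mathcal{G})$. It commutes with $(H+1)^{1/2}$ and hence is also unitary on the form domain $H^1(\mathcal{G})$, with norm $\|(H+1)^{1/2}\cdot\|$ equivalent to the $H^1(\mathcal{G})$ norm. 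By duality it is also unitary on $H^1(\mathcal{G})^\star$.

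\textbf{Local existence and uniqueness.} The key estimate is that $N(\psi)=|\psi|^{p-2}\psi$ is locally Lipschitz on $H^1(\mathcal{G})$. Edge by edge, $N(\psi)_e$ is continuous at the vertices, and $|N(\psi)'|\le (p-1)|\psi|^{p-2}|\psi'|$. Together with the embedding $H^1(\mathcal{G})\hookrightarrow L^\infty(\mathcal{G})$ (the second inequality of the Gagliardo--Nirenberg proposition), this gives
\begin{equation*}
\|N(\psi)-N(\phi)\|_{H^1}\le C(\|\psi\|_{H^1}+\|\phi\|_{H^1})^{p-2}\|\psi-\phi\|_{H^1}.
\end{equation*}
The $L^\infty$ control makes the $p-2<1$ case harmless for the $L^2$ part. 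For the derivative part with $p<3$, where $|\psi|^{p-2}$ is only Hölder, I would instead run the contraction in the weaker metric $C([0,T],L^2)$ on a closed ball of $L^\infty_TH^1$. That ball is closed under $L^2$ convergence by weak lower semicontinuity, and there only the $L^2$ Lipschitz bound, supplied by the $L^\infty$ bound, is needed. Banach's fixed point theorem then yields a unique local solution on $[0,T]$, with $T$ depending only on $\|\psi_0\|_{H^1}$. Strong continuity into $H^1$, and the $C^1$ regularity into $H^1(\mathcal{G})^\star$ via the equation $i\partial_t\psi=H\psi-N(\psi)$ with $H:H^1\to (H^1)^\star$ bounded, follow as usual.

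\textbf{Conservation laws.} Next I show conservation of $M$ and $E$. I would regularize, either with $\psi_0\in D(H)$ and persistence of $D(H)$ regularity, or with resolvent smoothing $(1+\varepsilon H)^{-1}$. For regular solutions, multiplying by $\bar\psi$ or $\partial_t\bar\psi$ and integrating edge by edge, the boundary terms from integrating by parts vanish exactly because of continuity plus the Kirchhoff condition in \eqref{domain}. Continuous dependence in $H^1$ then passes the identities to general data. This is the step I expect to be the main technical obstacle, since one needs continuous dependence on data or a careful limit in the regularization. I would follow the Cazenave-type argument, using the dual-space formulation in $H^1(\mathcal{G})^\star$, where $\frac{d}{dt}E(\psi)=\mathrm{Re}\langle E'(\psi),\partial_t\psi\rangle=0$ can be justified directly.

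\textbf{Global existence.} Finally, subcriticality gives the a priori bound. By \eqref{gagliardo-nirenberg},
\begin{equation*}
\tfrac12\|\psi'\|^2\le E(\psi_0)+\tfrac{C}{p}\|\psi'\|^{\frac p2-1}M(\psi_0)^{\frac p4+\frac12}.
\end{equation*}
Since $\frac p2-1<2$ for $p<6$, this bounds $\|\psi'(t)\|_{L^2}$ uniformly in $t$. The local existence time depends only on the $H^1$ norm, so iterating the local step gives the global solution on $\mathbf{R}$.
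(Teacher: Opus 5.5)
Your proposal is correct and follows essentially the route the paper relies on: the paper gives no proof of its own and only cites \cite{CFN17} for a Banach fixed-point argument, and your scheme is the standard implementation of that argument. Your contraction on the Duhamel formulation, the conservation laws obtained by regularization, and the global bound from \eqref{gagliardo-nirenberg} are exactly that implementation. Your extra care for $2<p<3$ is a sound refinement: there you contract in the $L^2$ metric on an $H^1$-ball, which is needed because $|\psi|^{p-2}\psi$ is not locally Lipschitz on $H^1$ in that range.

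The one point to tighten is your alternative "direct" justification of energy conservation. The identity $\frac{d}{dt}E(\psi)=\mathrm{Re}\langle E'(\psi),\partial_t\psi\rangle$ is not meaningful for $H^1$ solutions, because both $E'(\psi)$ and $\partial_t\psi$ lie only in $H^1(\mathcal{G})^\star$. Keep the regularization route instead. Pass to the limit using $L^2$ convergence of the approximate solutions on a common time interval, weak lower semicontinuity, and time reversibility. Do not rely on $H^1$ continuous dependence for this step, since for $p<3$ that property is itself usually derived from the conservation laws.
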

\subsection{Ground states on non-compact graphs}
Given a non-compact graph $\mathcal{G}$ and a mass $\mu >0$, we introduce the following set of functions with prescribed mass
\begin{equation}
    H^1_{\mu}(\mathcal{G}) = \{ \psi \in H^1(\mathcal{G}), \quad M(\psi) = \mu\}.
\label{eq: H1_mu}
\end{equation}
We then consider the following minimization problem
\begin{equation}
    \mathcal{E}_{\mathcal{G}}(\mu) := \inf_{\psi \in H^1_{\mu}(\mathcal{G})}E(\psi,\mathcal{G}).
    \label{minimization_problem}
\end{equation} 
In the subcritical regime, the Gagliardo-Nirenberg inequality (\ref{gagliardo-nirenberg}) implies that, for any $\mu >0$, the energy in (\ref{energy}) is bounded from below in the set $H^1_{\mu}(\mathcal{G})$, i.e. $  \mathcal{E}_{\mathcal{G}}(\mu)>  -\infty $ for any $\mu >0$. We give the following definition.
\begin{definition}
    Given $\mathcal{G}$ and $\mu >0$, a function $u \in H_{\mu}^1(\mathcal{G})$ is called a ground state if and only if
    \begin{equation}
        E(u, \mathcal{G}) = \mathcal{E}_{\mathcal{G}}(\mu).
    \end{equation}
\end{definition}
A familiar example of metric graph, trivial but however of great importance in the following, is the real line $\mathcal{G}= \mathbf{R}$. It can be considered as composed by two half-lines joined at a vertex, where the Kirchhoff condition reduces to continuity of the function and its derivative. In this case, the set of ground states is well known and explicit: given a mass $\mu >0$ the set of ground states is the set
\begin{equation}\label{Omu}
    \mathcal{O}_{\mu} = \{ e^{i\theta}\phi_{\mu}(\cdot-x_0), \  \text{with} \  (\theta, x_0) \in \mathbf{R}^2\}
\end{equation}
where
\begin{equation}
    \phi_{\mu}(x) = C_p \ \mu^{\frac{2}{6-p}}\sech^{\frac{2}{p-2}}(c_p \mu^{\frac{p-2}{6-p}}x),
    \label{eq: soliton}
\end{equation}
 with the constants $C_p$ and $c_p$ depending only on $p$ (see \cite{sulem}). The function $\phi_{\mu}$ is the unique positive and even ground state.\\
In the case of connected non-compact graphs, NLS ground states do not always exist. This fact has been investigated in \cite{adami_2,adami_rev}, and it turns out that the existence of ground states is related to the topology of graphs, as we will see in Section \ref{H}. When they exist, ground states are real and positive. For now, we state the following proposition from \cite{adami_2}, of which we report the proof.
 \begin{proposition}
     Suppose $\mathcal{G}$ is a connected non-compact graph and let $\mu >0$. Then
     \begin{equation}
         \mathcal{E}_{\mathcal{G}}(\mu) \leq E(\phi_{\mu},\mathbf{R}),
     \end{equation}
     where $E(\phi_{\mu}, \mathbf{R})$ represents the energy of the ground state $\phi_{\mu}$ on the real line.
     \label{energy_runaway}
 \end{proposition}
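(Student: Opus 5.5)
The plan is to construct, for every $\varepsilon>0$, a trial function $u_\varepsilon \in H^1_\mu(\mathcal{G})$ supported on a single half-line with $E(u_\varepsilon,\mathcal{G}) \le E(\phi_\mu,\mathbf{R}) + \varepsilon$. Taking the infimum in \eqref{minimization_problem} and letting $\varepsilon\to 0$ then gives the claim. The trial functions will be truncated, translated copies of the soliton $\phi_\mu$ from \eqref{eq: soliton}, placed far out on a half-line; their energy converges to that of the full soliton on $\mathbf{R}$. Since $\mathcal{G}$ is non-compact, it has at least one half-line $e$, identified with $[0,+\infty)$, and every other edge will carry the zero function.

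\emph{Construction.} Fix a cutoff $\chi\in C^\infty(\mathbf{R})$ with $0\le\chi\le1$, $\chi=0$ on $(-\infty,1]$ and $\chi=1$ on $[2,+\infty)$. For $R>0$, set
\[
w_R(x)=\chi(x/R)\,\phi_\mu(x-2R), \qquad x\ge 0,
\]
on the edge $e$, and $w_R\equiv0$ on all other edges. Then $w_R$ vanishes near the vertex of $e$, so it is continuous on $\mathcal{G}$ and belongs to $H^1(\mathcal{G})$. To restore the mass constraint, set $u_R=\sqrt{\mu}\,w_R/\|w_R\|_{L^2}$, which lies in $H^1_\mu(\mathcal{G})$.

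\emph{Convergence of the energy.} Translating the edge coordinate by $2R$, we have $w_R(\cdot+2R)=\chi((\cdot+2R)/R)\,\phi_\mu$, and this tends to $\phi_\mu$ in $H^1(\mathbf{R})$ as $R\to\infty$ (extending by zero on the left). The reason is that the cutoff only affects the region $y\le -R$ of the translated variable, where $\phi_\mu$ and $\phi_\mu'$ decay exponentially, while the derivative of the cutoff is $O(1/R)$. Consequently:
\begin{itemize}
\item $\|w_R\|_{L^2}^2\to\mu$, so the normalization factor $\sqrt{\mu}/\|w_R\|_{L^2}$ tends to $1$;
\item $\|u_R'\|_{L^2}^2\to\|\phi_\mu'\|_{L^2(\mathbf{R})}^2$;
\item $\|u_R\|_{L^p}^p\to\|\phi_\mu\|^p_{L^p(\mathbf{R})}$, by continuity of the $L^p$ norm with respect to the $H^1$ norm, which follows from the Gagliardo--Nirenberg inequality or the Sobolev embedding in one dimension.
\end{itemize}
Since the norms on $\mathcal{G}$ reduce to norms on the single edge $e$, this yields $E(u_R,\mathcal{G})\to E(\phi_\mu,\mathbf{R})$. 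Hence $\mathcal{E}_\mathcal{G}(\mu)\le \inf_R E(u_R,\mathcal{G})\le E(\phi_\mu,\mathbf{R})$.

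The only point requiring care is the mass renormalization: the cutoff slightly changes the $L^2$ norm, so the function has to be rescaled, and the rescaling must not spoil the energy convergence. This is harmless here because both terms of the energy are continuous under the $H^1$ convergence established above, and the scaling factor tends to $1$. Connectedness of $\mathcal{G}$ is not really needed beyond guaranteeing that $H^1(\mathcal{G})$ is defined as above and that the Gagliardo--Nirenberg inequality applies.
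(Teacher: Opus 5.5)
Your argument is correct and is essentially the paper's proof: both put an $H^1$-approximation of $\phi_\mu$ with exact mass $\mu$ on a single half-line away from the vertex, extend it by zero, and pass to the limit in the energy. The only difference is that the paper takes an abstract compactly supported approximating sequence, while you build it explicitly with a cutoff and an $L^2$ renormalization. One small slip: with the soliton centered at $2R$, the cutoff in the translated variable is $\chi(2+y/R)$, which differs from $1$ on all of $[-R,0]$ and not only on $y\le -R$. The $H^1$ convergence still holds, by dominated convergence since $\chi(2+y/R)\to 1$ pointwise, or you can simply center the soliton at $3R$ so that your stated justification applies verbatim.
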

\begin{proof}
    Given $\mu >0$, consider a sequence of functions $u_n \in H^1(\mathbf{R})$ with compact support and mass $||u_n||^2_{L^2(\mathbf{R})} = \mu$ for any $n \in \mathbf{N}$, and such that $u_n \to \phi_{\mu}$ in $H^1(\mathbf{R})$ as $n \to \infty$. Since $u_n \to \phi_{\mu}$ in $L^p(\mathbf{R})$, we have
    \begin{equation}
        E(u_n, \mathbf{R}) \to E(\phi_{\mu}, \mathbf{R}), \qquad \text{as} \quad n \to \infty.
    \end{equation}
    Then, consider a sequence $\{x_n\} \subset \mathbf{R}$ such that $u_n(\cdot-x_n)$ is supported in $[0,+\infty)$ for any $n \in \mathbf{N}$. By identifying this interval with one half-line of $\mathcal{G}$, we can consider $u_n(\cdot-x_n)$ as a function in $H^1(\mathcal{G})$, by extending it to zero on any other edge of $\mathcal{G}$. Then we have
    \begin{equation}
        \mathcal{E}_{\mathcal{G}}(\mu) \leq \lim _{n \to \infty}E(u_n(\cdot-x_n) , \mathcal{G})= \lim _{n \to \infty}E(u_n, \mathbf{R}) = E(\phi_{\mu}, \mathbf{R}). 
    \end{equation}
\end{proof}
 
\subsection{Concentration-compactness principle}
\label{sect: concentration_comp}
The concentration-compactness principle is a variational tool that has been introduced by P.L. Lions \cite{lions} to describe the possible behaviors of sequences with prescribed $L^2$-norm in $\mathbf{R}^d$, with $d \in \mathbf{N}$. In the case of non-compact graphs, this principle has been suitably adapted in \cite{CFN17}, and can be summarized as follows.\\
Let $\mathcal{G}$ be a connected non-compact graph. For any function $\psi \in L^2(\mathcal{G})$ and $t \geq 0$ we define the concentration function
\begin{equation}
    \rho(\psi,t) = \sup_{\underline{y}\in \mathcal{G}}||\psi||^2_{L^2(B(\underline{y},t))},
\end{equation}
where $B(\underline{y},t)$ represents the open set of points on $\mathcal{G}$ whose distance from $\underline{y}$ is less than $t\geq0$. 
Given a sequence $\{\psi_n\}\subset L^2(\mathcal{G})$ such that $||\psi_n||^2_{L^2(\mathcal{G})} \to \mu >0$, the concentration parameter $\tau\in [0,\mu]$ is
\begin{equation}
    \tau = \lim_{t \to +\infty}\liminf_{n \to \infty}\rho(\psi_n,t)
\end{equation}
We can now state the following proposition (see \cite{CFN17} Section 3 and in particular Lemma 3.7).
\begin{proposition}
    Let $\{u_n\}$ be a sequence in $L^2(\mathcal{G})$ such that 
    \begin{equation}
        \int_{\mathcal{G}}|u_n|^2dx \to \mu \qquad \text{as} \quad n \to +\infty
    \end{equation}
    and
    \begin{equation}
        \sup_{n \in \mathbf{N}}||u'_n||_{L^2(\mathcal{G})} < \infty
    \end{equation}
    where $\mu >0$ is fixed. There exists a subsequence $\{u_{n_k}\}$ satisfying one of the following possibilities:
    \begin{itemize}
        \item[i)] (compactness) If $\tau = \mu$, one of the following cases occurs:
        \begin{itemize}
          \item[$i_1$)](convergence) there exists $u \in H^1(\mathcal{G})$ such that $u_{n_k} \to u$ in $L^q(\mathcal{G})$ as $k \to \infty$ for all $2\leq q\leq \infty$;
        \item[$i_2$)] (runaway) there exists one half-line $e^*$ such that for any $t>0$ and $2 \leq q\leq \infty$ we have
        \begin{equation}
            \lim_{k \to \infty} \Bigl( \sum_{e\neq e^*} ||(u_{n_k})_{e}||^q_{L^q(I_e)} + ||(u_{n_k})_{e^*}||^q_{L^q((0,t))} \Bigr) =0.
        \end{equation}  
        \end{itemize}
        \item[ii)](vanishing)  If $\tau = 0$, then $u_{n_k} \to 0$ in $L^q(\mathcal{G})$ for any $2<q\leq \infty$;
        \item[iii)](dichotomy)  If $0<\tau < \mu$, then for any $\varepsilon>0$ there exist $k_0\geq 1$ and $u^1_{k},u^2_k \in L^2(\mathcal{G})$ satisfying for $k \geq k_0$
        \begin{equation}
            ||u_{n_k} - (u^1_k+u^2_k)||_{L^2(\mathcal{G})} \leq \varepsilon ; \qquad \Bigl|\int_{\mathcal{G}}{|u_k^1|^2dx-\tau}\Bigr| \leq \varepsilon; \qquad \Bigl|\int_{\mathcal{G}}{|u_k^2|^2dx-(\mu-\tau)}\Bigr| \leq \varepsilon
        \end{equation}
        and \begin{equation}
            supp(u_k^1) \cap supp(u_k^2)  = 0.
        \end{equation}
    \end{itemize}
    \label{concentration}
\end{proposition}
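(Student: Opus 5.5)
The plan is to adapt Lions' concentration-compactness argument to the graph setting, working with the concentration function $\rho(u_n,t)$, which is nondecreasing in $t$ and bounded by $\sup_n\|u_n\|_{L^2}^2$. First pass to a subsequence along which $\mu_n:=\|u_n\|^2_{L^2}$ converges and $\sup_n\|u_n\|_{H^1}<\infty$ (the mass converges, the derivatives are bounded). By a Helly-type diagonal argument we extract a further subsequence for which $\rho(u_{n_k},t)\to\rho(t)$ for every $t\ge0$. Then $\tau=\lim_{t\to\infty}\rho(t)\in[0,\mu]$, and we split into the three cases $\tau=\mu$, $\tau=0$, $0<\tau<\mu$.

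\emph{Vanishing.} If $\tau=0$, then for each fixed $t$ we have $\sup_{\underline y}\|u_{n_k}\|^2_{L^2(B(\underline y,t))}\to0$. Cover $\mathcal G$ by balls of radius $1$ (on each edge, unit intervals) with bounded overlap. On each such piece, use the one-dimensional Gagliardo--Nirenberg estimate $\|u\|_{L^\infty(I)}^2\le C\|u\|_{L^2(I)}\|u\|_{H^1(I)}$. Taking the supremum over pieces gives $\|u_{n_k}\|_\infty\to0$, and interpolating with the $L^2$ bound gives convergence to $0$ in $L^q$ for every $q>2$.

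\emph{Compactness, $\tau=\mu$.} Choose centers $\underline y_k$ that almost realize $\rho(u_{n_k},t)$. Either $(\underline y_k)$ stays bounded, or, since the graph has finitely many edges and the compact core is bounded, along a subsequence $\underline y_k$ escapes to infinity along a single half-line $e^*$. In the bounded case tightness holds: for every $\varepsilon$ there is $R$ with $\|u_{n_k}\|_{L^2(\mathcal G\setminus B(\underline v,R))}<\varepsilon$. Boundedness in $H^1$ then gives a weak limit $u$, Rellich on the compact set $B(\underline v,R)$ gives strong $L^2$ convergence, hence $\|u\|^2=\mu$ and $L^2$ convergence, and Gagliardo--Nirenberg upgrades this to $L^q$ for $2\le q\le\infty$. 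In the escaping case, all but $\varepsilon$ of the mass lies in $B(\underline y_k,R)\subset e^*$ far from the vertex. So the $L^2$ norm on the other edges and on $(0,t)\subset e^*$ tends to $0$, and applying the local $L^\infty$ bound on those regions (with bounded $H^1$) gives the $L^q$ statement for all $q$.

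\emph{Dichotomy.} If $0<\tau<\mu$, fix $\varepsilon$ and pick $t_0$ with $\rho(t_0)>\tau-\varepsilon$. Choose $R_k\to\infty$ with $\rho(u_{n_k},R_k)<\tau+\varepsilon$. Split $u_{n_k}=u^1_k+u^2_k+w_k$, where $u^1_k=u_{n_k}\mathbf 1_{B(\underline y_k,t_0)}$ and $u^2_k=u_{n_k}\mathbf 1_{\mathcal G\setminus B(\underline y_k,R_k)}$; these have disjoint supports (smooth cutoffs could be used if $H^1$ regularity were wanted, but the statement only asks for $L^2$). The remainder $w_k$ lives on the annulus, whose mass is at most $\rho(u_{n_k},R_k)-\rho(u_{n_k},t_0)\le 2\varepsilon$ for large $k$. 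The mass estimates for $u^1_k$ and $u^2_k$ follow from $\mu_k\to\mu$.

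The main obstacle I expect is the geometric step in the compactness case: showing that a non-tight but concentrated sequence must escape inside a single half-line. This uses finiteness of the edge set and the structure of balls on a graph, since a ball of radius $t$ far from the core is an interval on one half-line. The bounded-overlap covering used for vanishing also needs care near vertices of high degree, but finiteness of $|E|$ handles that.
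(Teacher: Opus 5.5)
The paper does not prove this proposition; it quotes it from \cite{CFN17}. Your outline is the standard concentration-function argument (Lions, Cazenave) on which that result rests. Your vanishing, dichotomy and convergence/runaway steps are sound. As written, however, the argument does not prove the statement with the paper's definition of $\tau$. In the paper, $\tau=\lim_{t\to\infty}\liminf_{n\to\infty}\rho(u_n,t)$ is computed along the whole sequence, and the alternatives i)--iii) are indexed by this number. Your $\tau$ is $\lim_{t\to\infty}\rho(t)$ for an arbitrary Helly subsequence, which is only $\geq$ the paper's $\tau$.

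For example, let the even terms be a fixed $\psi\in H^1_\mu(\mathcal G)$. Let the odd terms be the dilations $n^{-1/2}f(\cdot/n)$, placed on one half-line, of a fixed $f\in C_c^\infty(0,+\infty)$ with $\|f\|^2_{L^2}=\mu$. Then the paper's $\tau$ is $0$. But a Helly subsequence made of even terms gives $\lim_t\rho(t)=\mu$, so your procedure returns a compact subsequence instead of the vanishing one required by ii). In case iii) the masses of $u^1_k,u^2_k$ would likewise be wrong.

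The repair is to select the subsequence before applying Helly. With $\ell(t)=\liminf_n\rho(u_n,t)$, pick $n_k$ increasing with $\rho(u_{n_k},k)<\ell(k)+1/k$. Since $\rho$ is nondecreasing in $t$ and $\ell(k)\le\tau$, you get $\rho(u_{n_k},t)\le\tau+1/k$ for $k\ge t$. Any further Helly limit then satisfies $\ell(t)\le\rho(t)\le\tau$, hence $\lim_{t\to\infty}\rho(t)=\tau$.

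A second step needs to be made explicit in the compactness case. Both tightness around a fixed vertex and escape along $e^*$ require one sequence of centers $\underline{y}_k$ such that, for every $\varepsilon>0$, $\|u_{n_k}\|^2_{L^2(B(\underline{y}_k,R_\varepsilon))}>\mu-\varepsilon$ for $k$ large. Taking near-maximizers of $\rho(u_{n_k},t)$ gives centers that depend on $t$, hence on $\varepsilon$. The usual half-mass argument fixes this:
\begin{itemize}
\item[1)] fix centers $\underline{y}_k$ for $\varepsilon_0=\mu/4$ and radius $t_{\varepsilon_0}$;
\item[2)] for $\varepsilon<\mu/4$ and $k$ large, $\|u_{n_k}\|^2_{L^2}<3\mu/2$, so a ball $B(\underline{y}^\varepsilon_k,t_\varepsilon)$ carrying mass above $\mu-\varepsilon$ cannot be disjoint from $B(\underline{y}_k,t_{\varepsilon_0})$;
\item[3)] hence $B(\underline{y}^\varepsilon_k,t_\varepsilon)\subset B(\underline{y}_k,t_{\varepsilon_0}+2t_\varepsilon)$.
\end{itemize}
With these two additions, the rest of your argument goes through: Rellich on the finite union of intervals $B(\underline v,R)$, the Gagliardo--Nirenberg upgrade to $L^q$, the covering by intervals of length bounded below for the $L^\infty$ bounds, and $R_k>t_0$ for disjoint supports.
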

In the next sections, we will apply the concentration-compactness principle to a particular class of sequences, called minimizing sequences, which are defined as follows.
\begin{definition}\label{minimizzanti}
    Given $\mu >0$, we say that $\{\psi_n\}_{n \in \mathbf{N}} \subset H^1(\mathcal{G})$ is a \textit{minimizing sequence} if and only if
    \begin{equation}
        M(\psi_n) \to \mu \quad \text{and} \quad E(\psi_n,\mathcal{G}) \to \mathcal{E}_{\mathcal{G}}(\mu), \qquad \text{as} \quad n \to \infty.
    \end{equation}
\end{definition}
Using arguments based on energy estimates (see \cite{CFN17, Claudio18, cazenave_lions}), it can be proven that for minimizing sequences the \textit{vanishing} and \textit{dichotomy} behaviors are prohibited on any connected non-compact graph. The only possible behaviors are \textit{convergence} or \textit{runaway}.  If a minimizing sequence is runaway, it escapes toward infinity along a single half-line, in a concentrated manner. On the other hand, if a minimizing sequence is convergent, then the limit function provides a ground state for the system, as explained in the following proposition.
\begin{proposition}[\cite{adami, CFN17}]
Consider a connected non-compact graph $\mathcal{G}$ and let $\mu >0$. A minimizing sequence $\{\psi_n\}_{n \in \mathbf{N}} \subset H^1(\mathcal{G})$ at (asymptotic) mass $\mu$ is weakly compact in $H^1(\mathcal{G})$. If $\psi_n \rightharpoonup \psi$ in $H^1(\mathcal{G})$, then, either, 
    \begin{itemize}
        \item[i)] $\psi_n$ is runaway and, in particular, $\psi_n \to 0$ in $L^{\infty}_{loc}(\mathcal{G})$ and $\psi =0$, or
        \item[ii)] $\psi_n$ is convergent and $\psi \in H^1_{\mu}(\mathcal{G})$, $\psi$ is a ground state and $\psi_n \to \psi$ in $H^1(\mathcal{G}) \cap L^q(\mathcal{G})$ for any $2 \leq q \leq \infty$.
    \end{itemize}
    \label{prop: minimizing_1}
\end{proposition}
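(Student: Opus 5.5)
We first establish weak compactness, then split along the concentration--compactness alternatives of Proposition~\ref{concentration}, using the energy level to exclude vanishing and dichotomy.

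\emph{Boundedness and weak compactness.} Since $M(\psi_n)\to\mu$ and $E(\psi_n,\mathcal{G})\to\mathcal{E}_{\mathcal{G}}(\mu)$, the Gagliardo--Nirenberg inequality \eqref{gagliardo-nirenberg} with $p<6$ gives
\[
\tfrac12\|\psi_n'\|^2_{L^2} \le E(\psi_n,\mathcal{G}) + \tfrac{C}{p}\|\psi_n'\|_{L^2}^{\frac p2-1}\mu^{\frac p4+\frac12}+o(1).
\]
The exponent $\frac p2-1$ is less than $2$, so $\|\psi_n'\|_{L^2}$ is bounded. Hence $\{\psi_n\}$ is bounded in $H^1(\mathcal{G})$ and has a weakly convergent subsequence. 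I will pass to a subsequence to which Proposition~\ref{concentration} applies.

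\emph{Excluding vanishing.} If vanishing occurred, then $\|\psi_n\|_{L^p}\to0$. This gives $\liminf E(\psi_n)\ge 0$, whereas $\mathcal{E}_{\mathcal{G}}(\mu)\le E(\phi_\mu,\mathbf{R})<0$ by Proposition~\ref{energy_runaway} and the negativity of the soliton energy. This is a contradiction.

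\emph{Excluding dichotomy.} This is the main obstacle. I will first prove strict subadditivity
\[
\mathcal{E}_{\mathcal{G}}(\mu)<\mathcal{E}_{\mathcal{G}}(\tau)+\mathcal{E}_{\mathcal{G}}(\mu-\tau), \qquad 0<\tau<\mu.
\]
The argument is by scaling. For $\theta>1$, $E(\sqrt\theta u)=\theta E(u)+\frac{\theta-\theta^{p/2}}{p}\|u\|_p^p$, so $\mathcal{E}_{\mathcal{G}}(\theta m)\le\theta\,\mathcal{E}_{\mathcal{G}}(m)$. The inequality is strict because near-minimizers have $\|u\|_p$ bounded below, which follows from $\mathcal{E}_{\mathcal{G}}(m)<0$. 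The standard Lions argument then yields strict subadditivity.

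Next I will build the splitting $u_k^1,u_k^2$ via smooth cutoffs adapted to the graph distance. The cutoff gradient error is $O(1/R)$ in $L^2$, so it does not affect the energy. This gives
\[
E(\psi_{n_k})\ge E(u^1_k)+E(u^2_k)-o(1)\ge \mathcal{E}_{\mathcal{G}}(\tau)+\mathcal{E}_{\mathcal{G}}(\mu-\tau)-C\varepsilon .
\]
Letting $\varepsilon\to0$ contradicts strict subadditivity. Here I will use continuity of $m\mapsto\mathcal{E}_{\mathcal{G}}(m)$, obtained from the same scaling.

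\emph{The compact case.} We are left with $\tau=\mu$, so either runaway or convergence holds.

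In the runaway case ($i_2$), $\psi_n\to0$ in $L^q$ on every edge except $e^*$, and also on $(0,t)\subset e^*$, uniformly for $q=\infty$. Therefore $\psi_n\to0$ in $L^\infty_{loc}$. Weak limits coincide with pointwise local limits, so $\psi=0$.

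In the convergence case ($i_1$), $\psi_n\to\psi$ in $L^2\cap L^p$. Hence $M(\psi)=\mu$, and weak lower semicontinuity of $\|\psi'\|_{L^2}$ gives
\[
E(\psi)\le\liminf E(\psi_n)=\mathcal{E}_{\mathcal{G}}(\mu).
\]
So $\psi$ is a ground state and $E(\psi)=\mathcal{E}_{\mathcal{G}}(\mu)$. It follows that $\|\psi_n'\|_{L^2}\to\|\psi'\|_{L^2}$, and together with weak convergence this gives strong $H^1$ convergence. Finally, $L^\infty$ convergence follows from the $L^\infty$ Gagliardo--Nirenberg bound applied to $\psi_n-\psi$. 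Every subsequence admits a further subsequence of one of these two types, which is the claimed alternative.
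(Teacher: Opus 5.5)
Your argument is correct and follows the route the paper indicates for this cited result: apply the graph concentration--compactness trichotomy of Proposition~\ref{concentration}, exclude vanishing via $\mathcal{E}_{\mathcal{G}}(\mu)\le E(\phi_\mu,\mathbf{R})<0$ and dichotomy via strict subadditivity plus a distance-based cutoff splitting, and then identify the weak limit in the runaway and convergent cases. In your closing sub-subsequence step, say explicitly that the two types cannot mix across subsequences: the common weak limit $\psi$ is $0$ in the runaway case and has mass $\mu>0$ in the convergent case.
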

\section{The topological assumption H}
\label{H}
We are interested in a family of graphs which is characterized by a topological property, called assumption H. This property has been introduced for the first time in \cite{adami_2} (see also \cite{adami_rev}) in relation to the problem of existence of ground states on connected non-compact graphs. We recall it in the following definition.
\begin{definition}
    A connected graph $\mathcal{G}$ satisfies assumption H if and only if the following holds: every $\underline{x} \in \mathcal{G}$ lies on a trail that contains two half-lines.
\end{definition}
In other words, assumption H tells us that, given any point $\underline{x}$ in $\mathcal{G}$, it is possible to reconstruct a copy of the real line $\mathbf{R}$ that passes from $\underline{x}$, by collecting consecutive edges of $\mathcal{G}$.
This property allows one to connect the minimization problem in (\ref{minimization_problem}) to the one on the real line, as explained in the following proposition.
\begin{proposition}[\cite{adami_2}]
    Let $\mathcal{G}$ be connected and satisfy assumption H. Let $\mu >0$. Then
    \begin{equation}
        \mathcal{E}_{\mathcal{G}}(\mu) = \inf_{u \in H^1_{\mu}(\mathbf{R})}E(u,\mathbf R) = E(\phi_{\mu}, \mathbf{R}).
    \end{equation}
    \label{prop: energy}
\end{proposition}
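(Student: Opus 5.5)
The inequality $\mathcal{E}_{\mathcal{G}}(\mu)\leq E(\phi_\mu,\mathbf{R})$ is already given by Proposition~\ref{energy_runaway}, and the second equality is the classical fact that $\phi_\mu$ minimizes the energy on the line among functions of mass $\mu$. So it remains to show the reverse inequality $E(u,\mathcal{G})\geq E(\phi_\mu,\mathbf{R})$ for every $u\in H^1_\mu(\mathcal{G})$. The plan is a rearrangement argument: to each such $u$ we associate a function on $\mathbf{R}$ with the same mass and $L^p$ norm and no larger kinetic energy.

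By density we may take $u$ real, nonnegative (replacing $u$ by $|u|$ lowers the kinetic term and keeps the other quantities), and such that each $u_e$ has finitely many critical points on compact sets. We then consider the distribution function $\lambda(t)=|\{\underline{x}\in\mathcal{G}:u(\underline{x})>t\}|$ for $0<t<\|u\|_\infty$. The key topological step is to show that, for almost every such $t$, the level set $u^{-1}(t)$ contains at least two points. Let $\underline{x}_0$ be a point where $\max u$ is attained. By Assumption~H, $\underline{x}_0$ lies on a trail $\mathcal{T}$ containing two half-lines. Unfolding $\mathcal{T}$ as in Subsection~\ref{sub-graphs} gives a function $u|_{\mathbf{R}}\in H^1(\mathbf{R})$ that attains the value $\|u\|_\infty$ and tends to $0$ at $\pm\infty$. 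By continuity it takes every value $t\in(0,\|u\|_\infty)$ at least once on each side of the preimage of $\underline{x}_0$. Since a trail does not repeat edges, these two preimages are distinct points of $\mathcal{G}$, apart from a finite set of exceptional $t$.

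With this multiplicity, the standard coarea and Pólya–Szegő argument goes through. Let $u^*$ be the symmetric decreasing rearrangement on $\mathbf{R}$ with the same distribution function $\lambda$. Then $\|u^*\|_{L^q(\mathbf{R})}=\|u\|_{L^q(\mathcal{G})}$ for every $q$. For the kinetic term, the coarea formula together with Cauchy–Schwarz gives
\[
\int_{\mathcal{G}}|u'|^2\,dx=\int_0^{\|u\|_\infty}\sum_{u(\underline{x})=t}|u'(\underline{x})|\,dt\geq\int_0^{\|u\|_\infty}\frac{N(t)^2}{\sum_{u(\underline{x})=t}|u'(\underline{x})|^{-1}}\,dt .
\]
Here $N(t)\geq2$ is the number of points in the level set, and $\sum_{u(\underline{x})=t}|u'(\underline{x})|^{-1}=-\lambda'(t)$. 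For $u^*$ equality holds with $N=2$. Hence $\|(u^*)'\|_{L^2(\mathbf{R})}\leq\|u'\|_{L^2(\mathcal{G})}$, and therefore $E(u,\mathcal{G})\geq E(u^*,\mathbf{R})\geq E(\phi_\mu,\mathbf{R})$.

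The main obstacle is the multiplicity claim $N(t)\geq2$ and the measure-theoretic details around it. One must handle points of the level set that are vertices or critical points, check that two distinct preimages on the unfolded line really correspond to distinct points of $\mathcal{G}$ (loops may revisit vertices), and justify the reduction to piecewise-monotone $u$ by density, with continuity of $E$ in $H^1$. I would settle the first two issues by excluding the finitely many critical and vertex values of $t$, which form a null set.
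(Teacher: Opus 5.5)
Your argument is correct and is essentially the proof from \cite{adami_2}; the paper only quotes that result and does not reprove it. As in that reference, Assumption H applied at a maximum point of $|u|$, together with the fact that a trail repeats no edges, forces almost every level $t\in(0,\|u\|_\infty)$ to be attained at least twice, so the symmetric rearrangement on $\mathbf{R}$ preserves all $L^q$ norms without increasing the kinetic energy, and Proposition~\ref{energy_runaway} supplies the reverse inequality.
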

An interesting sub-family of graphs that satisfy assumption H is made of bubble tower graphs. These graphs consist in two half-lines attached to a single chain of bubbles, as depicted in Figure \ref{fig:1}. The number of bubbles is arbitrary (but greater than zero) and the size of each bubble is also arbitrary. If a bubble is made of two edges, they must have the same length.
\begin{figure}[b]
\centering
\begin{tikzpicture}
\draw[gray, thick] (-5,-2) -- (5,-2);
\draw [gray, thick](0,-2) to [out=25, in=0] (0,-1);
\draw [gray, thick](0,-2) to [out=155, in=-180] (0,-1);
\draw [gray, dashed](0,-1) to [out=25, in=0] (0,0);
\draw [gray, dashed](0,-1) to [out=155, in=-180] (0,0);
\draw [gray, thick](0,0) to [out=25, in=0] (0,1);
\draw [gray, thick](0,0) to [out=155, in=-180] (0,1);
\draw [gray, thick](0,1) to [out=25, in=0] (0,2);
\draw [gray, thick](0,1) to [out=155, in=-180] (0,2);
\draw plot [mark=*] coordinates {(0,-2)};
\draw plot [mark=*] coordinates {(0,-1)};
\draw plot [mark=*] coordinates {(0,0)};
\draw plot [mark=*] coordinates {(0,1)};
\fill (-5,-2) node[left] {$\infty$};
\fill (5,-2) node[right] {$\infty$};
\end{tikzpicture}
\caption{Bubble tower graph} \label{fig:1}
\end{figure}
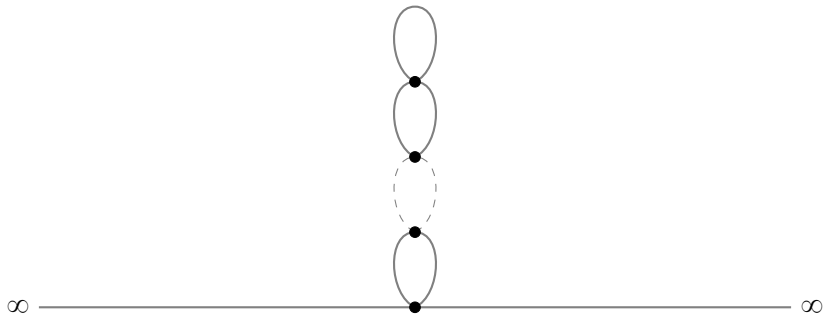
Together with the real line, bubble tower graphs are the only graphs that satisfy assumption H and admit ground states, as stated in the following proposition.
\begin{proposition}[\cite{adami_2}]
    Assume $\mathcal{G}$ is connected and satisfies assumption H. Then, for every mass $\mu >0$, $\mathcal{G}$ has no ground state unless $\mathcal{G} = \mathbf{R}$ or $\mathcal{G}$ is a bubble tower graph.
\end{proposition}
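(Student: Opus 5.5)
Since $\mathcal{E}_{\mathcal{G}}(\mu)=E(\phi_\mu,\mathbf{R})$ by Proposition~\ref{prop: energy}, a ground state is a function $u\in H^1_\mu(\mathcal{G})$ with $E(u,\mathcal{G})=E(\phi_\mu,\mathbf{R})$. The plan is to take such a $u$ and force the graph to be the line or a bubble tower. By the usual rearrangement argument we may assume $u\ge 0$. First, $|u|$ is again a ground state. Second, $u$ solves the stationary equation $-u''-u^{p-1}=-\lambda u$ with Kirchhoff conditions. By the strong maximum principle, or ODE uniqueness on each edge, $u>0$ everywhere on $\mathcal{G}$.

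The core tool is the decreasing rearrangement on the line. Let $N(t)=\#\{\underline{x}: u(\underline{x})=t\}$ for $0<t<\|u\|_\infty$. Assumption H, combined with positivity and decay at infinity, gives $N(t)\ge 2$ for almost every $t$. Indeed, each level below the maximum is crossed along the trail through a maximum point, once on each side, since both half-lines carry $u\to0$. The symmetric rearrangement $u^*$ on $\mathbf{R}$ then satisfies $\|u^*\|_{L^q(\mathbf{R})}=\|u\|_{L^q(\mathcal{G})}$. The Pólya--Szegő inequality with multiplicity, via the coarea formula and Cauchy--Schwarz, gives
\[
\int_{\mathcal{G}}|u'|^2 \;\ge\; \int_0^{\|u\|_\infty}\Big(\sum_{u=t}\frac{1}{|u'|}\Big)^{-1}N(t)^2\,dt \;\ge\; \int_{\mathbf{R}}|(u^*)'|^2 ,
\]
and hence $E(u,\mathcal{G})\ge E(u^*,\mathbf{R})\ge E(\phi_\mu,\mathbf{R})$. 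Equality therefore forces two things. First, $u^*=\phi_\mu$ up to translation. Second, for almost every $t$ we have $N(t)=2$ and equality in Cauchy--Schwarz, that is, $|u'|$ takes the same value at the two preimages.

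The main obstacle is turning this equality case into the topological conclusion. Because $u>0$ on all of $\mathcal{G}$, every point of every edge lies at some level $t\in(0,\|u\|_\infty]$. The condition $N(t)=2$ then says that the whole graph consists, level by level, of exactly two points. Consider the graph minus the maximum set. Any bounded edge on which $u$ is non-constant contributes preimages, and any half-line contributes one preimage for each small $t$. So there are exactly two half-lines, since three would give $N\ge3$ for small $t$.

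Next, consider a bounded edge not lying on the trail through the maximum given by H. Such an edge has its own trail by H, and I would show this forces extra preimages, contradicting $N=2$, unless the edge pairs with another edge at the same levels. This pairing is exactly the "two parallel edges of equal length" structure. The step I expect to need most care is excluding edges on which $u$ is constant, and handling the maximum level. A constant edge is impossible: $u''=0$ there would mean $u^{p-1}=\lambda u$, a single value that is excluded by the explicit profile $\phi_\mu$.

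Finally, following $u$ from the two half-lines inward, the level sets have two points outside the bubbles. Inside a bubble they would apparently have four points, which is absurd. The resolution is that the two tails meet at a single vertex, so the picture can only be a line. Alternatively, a bubble's two edges replace one piece of the line by a loop whose two halves together carry exactly two preimages per level, giving a point count of 2. I would formalize this by tracking the monotone branches of $u$: the graph must be a chain of the half-lines joined through bubbles whose edges carry the monotone halves of $\phi_\mu$, possibly with a pendant-free tower on top. This is precisely $\mathbf{R}$ or a bubble tower.
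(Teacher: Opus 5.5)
\textbf{Verdict: there is a genuine gap.} The analytic half of your argument works. The classification of the graph, which is the real content of the statement, is not carried out, and your sketch of it points in the wrong direction. The paper itself gives no proof here and only cites Adami--Serra--Tilli.

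\textbf{What works.} Your analytic part follows the standard Adami--Serra--Tilli route. You pass to $|u|$ and get $u>0$. You obtain $N(t)\ge 2$ from the trail through a maximum point. You apply the P\'olya--Szeg\H{o} inequality with multiplicity, and from equality you read off $u^*=\phi_\mu$ and $N(t)=2$ for a.e.\ $t$. Edges on which $u$ is constant are excluded more simply than you do it: $u^*=\phi_\mu$ forces every level set of $u$ to have measure zero.

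\textbf{Where the gap is.} The step from $N(t)=2$ to ``$\mathcal{G}$ is $\mathbf{R}$ or a bubble tower'' is the one you yourself flag, and the heuristics you offer for it are wrong in three places.
\begin{itemize}
\item An edge off the trail through the maximum is not allowed ``unless it pairs with another edge''; it is always excluded.
\item The two edges of a bubble both lie on that same trail, one on each side of the maximum. Each level is hit once on each of them, not four times.
\item ``A chain of the half-lines joined through bubbles'' describes bubbles in series between the two half-lines, which is exactly what must be ruled out. For such a bubble, every level between the values at its two vertices is hit on both parallel edges and again on the other side of the maximum, so $N\ge 3$ on a set of positive measure. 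In a bubble tower both half-lines emanate from the same vertex and all bubbles are stacked above it.
\end{itemize}

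\textbf{The missing idea} is to use the trail $\mathcal{T}$ through a maximum point $\underline{x}_{\max}$ a second time.
\begin{enumerate}
\item Along $\mathcal{T}$ alone, $u$ already takes every value $t\in(0,\|u\|_\infty)$ at least once on each side of $\underline{x}_{\max}$.
\item Any edge not in $\mathcal{T}$ carries a non-constant $u$, so it adds a third preimage for a set of $t$ of positive measure. Hence every edge of $\mathcal{G}$ lies on $\mathcal{T}$: $\mathcal{G}$ is a single trail running from one half-line to the other.
\item Unfold it as in Section~\ref{sub-graphs}. The resulting $u|_{\mathbf{R}}$ has the same mass, kinetic energy and $L^p$ norm as $u$, so it is a ground state on $\mathbf{R}$, i.e.\ $u|_{\mathbf{R}}=\phi_\mu(\cdot-x_0)$.
\item The unfolding map $\mathbf{R}\to\mathcal{G}$ is injective except at parameters sent to a vertex that the trail visits more than once, and those parameters must carry equal values of $\phi_\mu(\cdot-x_0)$. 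This function takes each value at most twice, and only at symmetric points $x_0\pm a$.
\item Therefore $\mathcal{G}$ is $\mathbf{R}$ with finitely many pairs $\{x_0-a_i,x_0+a_i\}$, $0<a_1<\dots<a_k$, glued together. For $k=0$ this is the line. For $k\ge 1$ it is a loop of length $2a_1$ containing the maximum, pairs of parallel edges of equal length $a_{i+1}-a_i$, and the two half-lines attached at the last vertex. That is exactly a bubble tower.
\end{enumerate}
This route also makes your half-line count and any use of the Kirchhoff condition unnecessary for the classification.
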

We refer to \cite{adami_2,adami_rev} for the proof and an extended explanation.
\subsection{Bubble tower graphs}\label{BBL} In this section we show that, for any mass $\mu>0$, bubble tower graphs admit a unique (up to phase shifts) ground state. \\
Suppose $\mathcal{G}$ is a bubble tower graph. Thanks to the particular symmetry of $\mathcal{G}$, we can place the positive and even ground state $\phi_{\mu}$ in (\ref{eq: soliton}) on the graph, obtaining a positive function $\Phi_{\mu} \in H^1_{\mu}(\mathcal{G})$ (we refer to the Figure \ref{fig:2} for the actual procedure, see also \cite{adami_2}). The function $\Phi_{\mu}$ has the same energy and mass of the original function $\phi_{\mu}$, i.e.
\begin{equation}
    M(\Phi_{\mu}) = \mu \quad \text{and} \quad  E(\Phi_{\mu},\mathcal{G}) = E(\phi_{\mu},\mathbf{R}),
\end{equation}
and provides the unique positive ground state of $\mathcal{G}$, as stated in the following proposition.
\begin{proposition} Consider a bubble tower graph $\mathcal{G}$ and a mass $\mu >0$. The set of ground states at mass $\mu$ is given by the orbit of $\Phi_{\mu}$ and denoted as $\mathcal{O}'_{\mu}$:
\begin{equation}
    \mathcal{O}'_{\mu} = \{ e^{i\theta}\Phi_{\mu},  \ \text{for} \  \theta \in \mathbf{R} \}.
\end{equation}
\label{ground_states}
\end{proposition}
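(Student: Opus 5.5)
The plan has two parts: first show that $\Phi_\mu$ (and its phase rotations) are ground states, then show that every ground state has this form.

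\textbf{$\Phi_\mu$ is a ground state.} By Proposition~\ref{prop: energy}, $\mathcal{E}_{\mathcal{G}}(\mu)=E(\phi_\mu,\mathbf{R})$. By construction $E(\Phi_\mu,\mathcal{G})=E(\phi_\mu,\mathbf{R})$ and $M(\Phi_\mu)=\mu$. Hence $\Phi_\mu$, and every $e^{i\theta}\Phi_\mu$, belongs to $H^1_\mu(\mathcal{G})$ and attains the infimum. This gives the inclusion of $\mathcal{O}'_\mu$ in the set of ground states.

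\textbf{Reduction to a real positive ground state.} Let $u$ be any ground state. Since $|u|\in H^1_\mu(\mathcal{G})$ and $\||u|'\|_{L^2}\le\|u'\|_{L^2}$, the function $|u|$ is also a ground state, with equality in the kinetic term. It satisfies the stationary NLS with Kirchhoff conditions, $-v''-|v|^{p-2}v=-\omega v$ for some Lagrange multiplier $\omega$. By ODE uniqueness and the strong maximum principle on each edge, $|u|>0$. Equality in $\||u|'\|=\|u'\|$ with $|u|>0$ forces the phase of $u$ to be locally constant along edges. Since $\mathcal{G}$ is connected and $u$ is continuous, the phase is globally constant, so $u=e^{i\theta}v$ with $v>0$ a ground state.

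\textbf{Identifying $v$ with $\Phi_\mu$.} Here I use the rearrangement argument behind Proposition~\ref{prop: energy}.

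The bubble tower has a distinguished Euler-type trail through every edge: the two half-lines plus, in each bubble, both arcs traversed once. Unfolding along it as in Subsection~\ref{sub-graphs} gives a function on $\mathbf{R}$ of the same mass and energy, but generally each value is taken twice as often inside bubbles.

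The cleaner route is symmetric rearrangement. Every level $t\in(0,\max v)$ has at least two preimages, because of the two half-lines; this is Assumption~H. So the symmetric decreasing rearrangement $v^*$ on $\mathbf{R}$ satisfies $\|(v^*)'\|_{L^2(\mathbf{R})}\le\|v'\|_{L^2(\mathcal{G})}$, with preserved $L^2$ and $L^p$ norms. Since $v$ is a ground state, $v^*$ is a ground state on $\mathbf{R}$, hence $v^*=\phi_\mu$ by uniqueness of the positive even ground state in \eqref{Omu}.

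Equality in the Pólya–Szegő inequality on graphs then forces almost every level set to have exactly two preimages, with $|v'|$ matching on them. This pins $v$ down as the "folding" of $\phi_\mu$ onto $\mathcal{G}$, which is $\Phi_\mu$.

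\textbf{Main obstacle.} The delicate step is this equality case. I would try to argue directly on the ODE instead. On each half-line $v$ solves $-v''-v^{p-1}=-\omega v$ and decays, so it is a tail of a translated soliton $\phi_\mu(\cdot-a)$ with the same $\omega$. Within each bubble the two arcs carry solutions with equal values at both endpoints, and each level must be hit only twice overall. This forces the two arcs to be symmetric copies, and the Kirchhoff conditions then propagate the soliton profile up the tower. Matching the mass fixes the translation, so the only candidate left is $\Phi_\mu$.
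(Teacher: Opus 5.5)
Your first step ($e^{i\theta}\Phi_\mu$ are ground states) is fine. The reduction to a positive ground state is also standard. The gap is the decisive step: showing that a positive ground state $v$ must equal $\Phi_\mu$. You assert the equality case of the P\'olya--Szeg\H{o} inequality (exactly two preimages per level, with matching $|v'|$), and then say this ``pins $v$ down as the folding of $\phi_\mu$''. That last claim is precisely the content of the proposition, and it is not argued. Your fallback ODE sketch does not close it either. The boundary-value problem for $-w''+\omega w=w^{p-1}$ on an arc with prescribed endpoint values can have several positive solutions, so equal endpoint data and equal arc lengths do not by themselves make the two arcs of a bubble symmetric copies. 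You would first need to extract from the equality case that the first integral $\tfrac12|w'|^2-\tfrac{\omega}{2}w^2+\tfrac1p w^p$ vanishes on every edge, and then run a monotonicity and length comparison bubble by bubble. ``The Kirchhoff conditions propagate the soliton profile up the tower'' is not an argument.

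The missing idea is one you wrote down and then discarded: unfolding along a trail $\mathcal{T}$ that uses every edge exactly once. Because each edge is traversed exactly once, $\Psi|_{\mathbf{R}}$ has exactly the same mass and energy as $\Psi$. Your remark that values are taken ``twice as often inside bubbles'' is irrelevant here. So if $\Psi$ is any ground state, $\Psi|_{\mathbf{R}}$ is a ground state on $\mathbf{R}$, and by \eqref{Omu} it has the form $e^{i\theta}\phi_\mu(\cdot-x_0)$. The only remaining constraint is continuity of $\Psi$. Both endpoints $0$ and $l$ of the compact portion of the trail correspond to the bottom vertex, so $\phi_\mu(-x_0)=\phi_\mu(l-x_0)$. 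Evenness and strict monotonicity of $\phi_\mu$ on $[0,\infty)$ then force $x_0=l/2$. Folding $\phi_\mu(\cdot-l/2)$ back onto $\mathcal{G}$ gives exactly $\Phi_\mu$, because the two arcs of each bubble have equal length, so the midpoint of the trail sits at the top of the tower. Hence $\Psi=e^{i\theta}\Phi_\mu$. This is essentially the paper's proof; the paper states the contradiction $\Psi|_{\mathbf{R}}\notin\mathcal{O}_\mu$ and leaves this continuity step implicit. It works directly for complex-valued $\Psi$, so your positivity reduction and the rearrangement machinery are unnecessary.
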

\begin{proof}
From Proposition (\ref{prop: energy}), we have that
\begin{equation}
    \mathcal{E}_{\mathcal{G}}(\mu) = E(\phi_{\mu}, \mathbf{R}).
\end{equation}
We suppose there exists a function $\Psi \in H^1_{\mu}(\mathcal{G})$ which does not belong to $\mathcal{O}'_{\mu}$ and such that $E(\Psi,\mathcal{G}) = E(\phi_{\mu},\mathbf{R})$. We unfold the graph by considering a trail $\mathcal{T}$ that contains all the edges of $\mathcal{G}$. Following the procedure in section (\ref{sub-graphs}) we obtain a function $\Psi|_{\mathbf{R}} \in H^1_{\mu}(\mathbf{R})$ with the same mass $\mu$ and energy $E(\phi_{\mu}, \mathbf{R})$ of the soliton $\phi_{\mu}$, but such that $\Psi|_{\mathbf{R}} \notin \mathcal{O}_{\mu}$ (where $\mathcal{O}_{\mu}$ is defined in \eqref{Omu}), and this is a contradiction.
\end{proof}

\begin{figure}
\centering
	\begin{tikzpicture}
	\draw (7,0) -- (10,0);
	\draw (10,0) -- (13,0);
    \draw (10,0.75) circle  (0.75cm);
	\draw plot [mark=*] coordinates {(10,0)};
	\fill (7,0) node[left] {$\infty$};
	\fill (13,0) node[right] {$\infty$};
    \draw (10, 1.5+0.4) circle (0.4cm);
	\draw plot [mark=*] coordinates {(10,1.5)};
    \fill (10.5,2.1) node[above] {$u$};
    \fill (9,1.2) node {$v_a$};
    \fill (11,1.2) node {$v_b$};
    \fill (8,0) node[above] {$w_a$};
    \fill (12,0) node[above] {$w_b$};
	\draw [thick] plot [domain=-3:0, smooth] (\x+2, {-0.2+4/(exp(-\x)+ exp(\x))});
	\draw [thick] plot [domain=0:3, smooth] (\x+2, {-0.2+4/(exp(-\x)+ exp(\x))});
    \draw[dashed] (1.5, 0)--(1.5, 2.5);
     \draw[dashed] (2.5, 0)--(2.5, 2.5);
     \draw[dashed] (0.5, 0)--(0.5, 2.5);
     \draw[dashed] (3.5, 0)--(3.5, 2.5);
     \fill (1.8,2.3) node[right] {$u$};
     \fill (0.8,1.7) node[right] {$v_a$};
     \fill (2.8,1.7) node[right] {$v_b$};
     \fill (-0.3,0.8) node[right] {$w_a$};
     \fill (3.8,0.8) node[right] {$w_b$};
     \draw[<->] (1.5,0) -- (2.5,0);
     \fill (2,0) node[above] {\scriptsize $l_1$};
     \fill (1,0) node[above] {\scriptsize$l_2/2$};
     \fill (3,0) node[above] {\scriptsize $l_2/2$};
     \fill (2,0) node[below] {\scriptsize $I_1$};
     \fill (1,0) node[below] {\scriptsize$I_2$};
     \fill (3,0) node[below] {\scriptsize $I_3$};
     \draw[<->] (0.5,0) -- (1.5,0);
     \draw[<->] (2.5,0) -- (3.5,0);
     \draw [->] (10.5,2) to[out=-45,in=70] (10.5,1.6);
     \draw [->] (9.5,2) to[out=235,in=110] (9.5,1.6);
     \draw [->] (9.1,1) to[out=235,in=110] (9.1,0.6);
     \draw [->] (10.9,1) to[out=-45,in=70] (10.9,0.6);
     \draw [->] (8,-0.2) -- (7.5,-0.2);
     \draw [->] (12,-0.2) -- (12.5,-0.2);
	\end{tikzpicture}
    \caption{On the left we plot the profile of the  real soliton $\phi_{\mu}$ on the real line. On the right we have a bubble-tower graph $\mathcal{G}$, with two bubbles. The top bubble has perimeter $l_1$ and the bottom one has perimeter $l_2$, with $l_1<l_2$. Define the intervals $I_1,I_2,I_3 \subset \mathbf{R}$ as in the left plot and identify the portions $u, v_a,v_b, w_a,w_b$ of the soliton $\phi_{\mu}$. The function $\Phi_{\mu} \in H^1(\mathcal{G})$ is defined as follows. We place the profile of $u$ on the top bubble (with the maximum located at a distance $l_1/2$ from the vertex). The profiles of $v_a$ and $v_b$ are placed on the left and right edges of second bubble. The portions $w_a$, $w_b$ are placed on the half-lines. Gluing these profiles together we obtain the profile of $\Phi_{\mu}$. The arrows in the right picture denote the directions along which $\Phi_{\mu}$ decreases. The same procedure can be followed in the case of any number of bubbles.} 
    \label{fig:2}
	\end{figure}
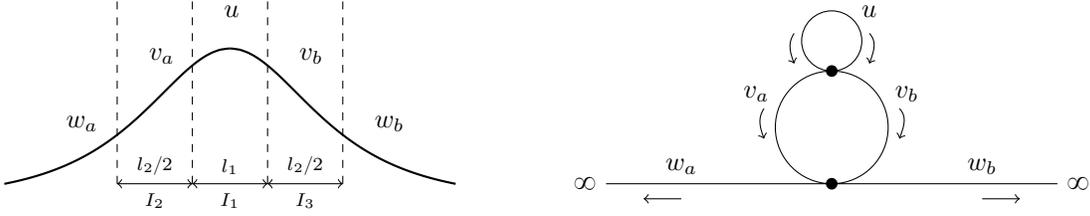
\section{Soliton confinement and reflection of slow solitons} 
\subsection{Soliton confinement and stability}
In this section we state precisely and prove the first of the main results of this paper. 
\begin{proposition}\label{main_prop_1}
    Let $\mu >0$. Assume $\mathcal{G}$ is a connected graph that satisfies assumption H, with $\mathcal{G}\neq\mathbf{R}$. Let $e$ be any of its half-lines and let $\mathcal{B}$ be the graph obtained from $\mathcal{G}$ by removing the half-line $e$. For any $\varepsilon >0$ and $M>0$ there exists $\delta >0$ and $c_{\delta}>0$ such that, if $u_0 \in H^1(\mathcal{G})$ satisfies
\begin{equation}
    ||u_0||_{H^1(\mathcal{B})} + ||(u_0)_e - \phi_{\mu}(\cdot-L)||_{H^1(\mathbf{R_+})} < \delta,
\end{equation}
with $L\geq c_{\delta}$, then the unique solution $\xi(t) \in C(\mathbf{R},H^1(\mathcal{G}))$ given by proposition (\ref{prop: cauchy}) with initial datum $u_0$ satisfies
\begin{equation}
    ||\xi(t)||_{H^1(\mathcal{B})} + \inf_{\theta \in \mathbf{R}, \  c>M}||(\xi(t))_e - e^{i\theta}\phi_{\mu}(\cdot-c)||_{H^1(\mathbf{R_+})} < \varepsilon, \qquad \forall t \geq 0.
\end{equation}
\label{main_prop}
\end{proposition}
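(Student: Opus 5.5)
We argue by contradiction, combining the conservation laws of Proposition~\ref{prop: cauchy} with the concentration--compactness dichotomy for minimizing sequences in Proposition~\ref{prop: minimizing_1}. Suppose the statement fails for some $\varepsilon>0$ and $M>0$. Then there are $\delta_n\to0$, $L_n\to\infty$, initial data $u_{0,n}$ satisfying the hypothesis with $(\delta_n,L_n)$, and times $t_n\ge0$ at which the conclusion fails.

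\textbf{Step 1: the data at $t_n$ form a minimizing sequence.} We may take $t_n$ to be the first time at which the quantity in the conclusion reaches $\varepsilon$; this uses continuity of $\xi_n\in C(\mathbf R,H^1(\mathcal G))$, and at $t=0$ the quantity is small because $L_n>M$. Since $\phi_\mu(\cdot-L_n)$ restricted to $\mathbf R_+$ differs from $\phi_\mu(\cdot-L_n)$ on $\mathbf R$ by an $H^1$-small tail, we get $M(u_{0,n})\to\mu$ and $E(u_{0,n},\mathcal G)\to E(\phi_\mu,\mathbf R)$. By Proposition~\ref{prop: energy}, $E(\phi_\mu,\mathbf R)=\mathcal E_{\mathcal G}(\mu)$. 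Conservation of mass and energy then shows that $\psi_n:=\xi_n(t_n)$ is a minimizing sequence in the sense of Definition~\ref{minimizzanti}.

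\textbf{Step 2: the non-bubble-tower case.} By Proposition~\ref{prop: minimizing_1}, a subsequence of $\psi_n$ either converges to a ground state or is runaway. If $\mathcal G$ is not a bubble tower, no ground state exists, so $\psi_n$ is runaway along some half-line $e^*$. The escaping mass must be reconciled with the definition of $t_n$. For $t<t_n$ the solution is $\varepsilon$-close to a soliton sitting on $e$ with centre $c>M$, so it stays far from the core; by continuity it is still close at $t_n$, which forces $e^*=e$.

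Restricting $\psi_n$ to $e$ and extending it to $\mathbf R$ yields an $H^1(\mathbf R)$ minimizing sequence with mass tending to $\mu$, and its norm on $\mathcal B$ tends to $0$. One then needs $H^1$ smallness on $\mathcal B$ rather than only $L^q$ smallness. This follows from energy splitting: since the $L^p$ mass on $\mathcal B$ vanishes, any nonzero kinetic energy left on $\mathcal B$ would push the total energy strictly above $E(\phi_\mu,\mathbf R)$. The Cazenave--Lions stability on $\mathbf R$ (compactness modulo translation of minimizing sequences) then gives $e^{i\theta_n}\phi_\mu(\cdot-c_n)$ with $(\psi_n)_e$ close to it in $H^1(\mathbf R_+)$ and $c_n\to\infty$. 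This is exactly the runaway case. It contradicts the failure of the conclusion at $t_n$, since both terms become smaller than $\varepsilon$ and $c_n>M$ for large $n$.

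\textbf{Step 3: the bubble-tower case (main obstacle).} Here the convergent alternative $\psi_n\to e^{i\theta}\Phi_\mu$ is not excluded by variational arguments alone. I would rule it out dynamically with an auxiliary functional $F$, for instance the mass on $\mathcal B$ plus a bounded piece of $e$, or more naturally a weighted functional measuring how much mass lies far out on $e$. The required properties are:
\begin{itemize}
\item $F$ is continuous in $t$ along the flow;
\item $F$ takes values near $0$ or near $\mu$ on any minimizing sequence. Concentration--compactness forces such a sequence to be near either $\mathcal O'_\mu$ or a runaway soliton, which gives an ``almost conservation'' dichotomy.
\end{itemize}
At $t=0$ we have $F\approx0$, since the mass is near infinity on $e$. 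If $\psi_n$ converged to $\Phi_\mu$, then $F(\psi_n)\approx\mu$. By continuity there would be intermediate times $s_n$ with $F(\xi_n(s_n))=\mu/2$. But $\xi_n(s_n)$ is again a minimizing sequence, and by the dichotomy $F$ cannot equal $\mu/2$ on it. This is a contradiction, and after it the argument of Step 2 applies.

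The delicate points are the choice of $F$, so that it separates the runaway profiles from $\mathcal O'_\mu$ uniformly, and the use of first exit times, which guarantees the runaway half-line is $e$ itself and not the other half-line of the tower.
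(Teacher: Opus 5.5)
Your proof is correct, but it is organized differently from the paper's. The paper first proves a weaker lemma by contradiction at \emph{arbitrary} bad times $t_n$: at every $t\ge 0$ the solution is $\varepsilon$-close to a soliton on \emph{some} half-line $e_*(t)$. Since nothing is then known about $\xi_n(t_n)$ on $\mathcal{B}$, a runaway along $e_*\neq e$ is handled by restricting to a path joining $e$ and $e_*$. For bubble towers, the convergent alternative is excluded by the intermediate-value argument for $F(u)=\int_{\mathcal{G}}|u|\Phi_\mu$. A separate connectedness argument in time then pins $e_*(t)=e$: uniqueness of $e_*(t)$ for $\varepsilon<\|\phi_\mu\|_{H^1(\mathbf{R}_+)}/4$, and propagation of $e_*(t)=e$ from $t=0$ by continuity of $t\mapsto\xi(t)$.

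Your first-exit-time choice merges these two steps. Since the quantity equals $\varepsilon$ at $t_n$, you get $\|\xi_n(t_n)\|_{H^1(\mathcal{B})}\le\varepsilon$. For $\varepsilon<\sqrt{\mu}$ (harmless to assume) this forces the runaway half-line to be $e$. It actually gives more than you use. Each half-line of a bubble tower carries only a tail of the folded soliton, so $\|\Phi_\mu\|^2_{L^2(\mathcal{B})}>\mu/2$. Hence a limit $\xi_n(t_n)\to e^{i\theta}\Phi_\mu$ is incompatible with $\|\xi_n(t_n)\|_{H^1(\mathcal{B})}\le\varepsilon$ once $\varepsilon^2<\mu/2$. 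The quantity in the conclusion already plays the role of $F$, so your Step~3 is correct but redundant. If you keep it, the paper's $F$ is the clean choice, since it has limits exactly $0$ and $\mu$ on runaway and convergent minimizing sequences. In exchange, the paper's route yields the ``some half-line'' lemma as a standalone statement.

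One detail needs to be made precise. ``Restricting to $e$ and extending to $\mathbf{R}$'' must be done so that the lower bound $E(\cdot,\mathbf{R})\ge E(\phi_{\mu_n},\mathbf{R})$ applies. The half-line $e$ alone, with free endpoint, has a strictly lower constrained infimum: the half-soliton, with energy $\frac12E(\phi_{2\mu},\mathbf{R})<E(\phi_\mu,\mathbf{R})$. Even reflection doesn't help either, since it doubles the mass. You can fix this in either of two ways:
\begin{itemize}
\item restrict to a trail through $e$ and a second half-line, as the paper does; or
\item use that runaway along $e$ gives $\xi_n(t_n)\to0$ at the vertex, and close off with a linear ramp on a unit interval, at $o(1)$ cost in mass and energy.
\end{itemize}
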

\begin{remark}
An informal reading of Proposition~\ref{main_prop} is the following: for all times, the solution remains arbitrarily close in the energy space to a rephased and translated copy of the half-line soliton profile, with translation parameter constrained by $c>M$. Compared with the standard notion of orbital stability, the additional restriction $c>M$ provides a quantitative control of confinement: the solution stays on the same half-line and never approaches the compact core more than the prescribed distance encoded by $M$, provided that the initial datum is sufficiently close to a soliton initially placed sufficiently far away from the core.
\end{remark}

Before proving proposition (\ref{main_prop}), we introduce the following weaker statement.
\begin{lemma}\label{mainlemma}
    Let $\mu >0$ and $\mathcal{G}$ as in proposition (\ref{main_prop}). Let $e$ be any of the half-lines of $\mathcal{G}$ and let $\mathcal{B}$ be the graph obtained from $\mathcal{G}$ by removing the half-line $e$. For any $\varepsilon >0$ and $M>0$ there exists $\delta >0$ and $c_{\delta}>0$ such that, if $u_0 \in H^1(\mathcal{G})$ satisfies
\begin{equation}
    ||u_0||_{H^1(\mathcal{B})} + ||(u_0)_e - \phi_{\mu}(\cdot-L)||_{H^1(\mathbf{R_+})} < \delta,
\end{equation}
with $L\geq c_{\delta}$, then the unique solution $\xi(t) \in C(\mathbf{R},H^1(\mathcal{G}))$ given by proposition (\ref{prop: cauchy}) with initial datum $u_0$ satisfies the following property: for any $t \geq 0$ there exists a half-line $e_*=e_*(t)$ of $\mathcal{G}$ such that 
\begin{equation}
    ||\xi(t)||_{H^1(\mathcal{B_*})} + \inf_{\theta \in \mathbf{R}, \  c>M}||(\xi(t))_{e_*} - e^{i\theta}\phi_{\mu}(\cdot-c)||_{H^1(\mathbf{R_+})} < \varepsilon,
    \label{weaker}
\end{equation}
where $\mathcal{B_*}$ is the graph obtained by removing the half-line $e_*$ from $\mathcal{G}$. 
\label{weak_lemma}
\end{lemma}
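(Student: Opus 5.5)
We argue by contradiction, combining the concentration-compactness dichotomy of Proposition~\ref{prop: minimizing_1} with the identification $\mathcal{E}_{\mathcal{G}}(\mu)=E(\phi_\mu,\mathbf{R})$ of Proposition~\ref{prop: energy}. For $w\in H^1(\mathcal{G})$ set
\begin{equation*}
D(w)=\min_{e_*}\Bigl(\|w\|_{H^1(\mathcal{B}_*)}+\inf_{\theta\in\mathbf{R},\,c>M}\|w_{e_*}-e^{i\theta}\phi_\mu(\cdot-c)\|_{H^1(\mathbf{R}_+)}\Bigr),
\end{equation*}
where the minimum runs over the finitely many half-lines of $\mathcal{G}$. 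This $D$ is continuous on $H^1(\mathcal{G})$, so $t\mapsto D(\xi(t))$ is continuous on $[0,\infty)$. It suffices to prove the lemma for $\varepsilon$ small; we fix $\varepsilon<d_0/2$, where $d_0:=\inf_\theta D(e^{i\theta}\Phi_\mu)>0$ in the bubble-tower case ($d_0>0$ because $\Phi_\mu$ carries a fixed positive mass in the compact core), and $d_0:=\infty$ otherwise.

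Suppose the lemma fails. Then there are $\delta_n\to0$, $L_n\to\infty$, data $u_{0,n}$ satisfying the hypothesis, and times $t_n$ with $D(\xi_n(t_n))\ge\varepsilon$. For $n$ large, $D(u_{0,n})<\varepsilon$: take $e_*=e$, $\theta=0$, $c=L_n>M$. By continuity we may therefore take $t_n$ to be the first time at which $D(\xi_n(t_n))=\varepsilon$. Since $\|u_{0,n}-\phi_\mu(\cdot-L_n)\|_{H^1}\to0$ (with the soliton viewed on $e$ and the tail near the vertex being negligible), conservation of mass and energy gives
\begin{equation*}
M(\xi_n(t_n))\to\mu,\qquad E(\xi_n(t_n),\mathcal{G})\to E(\phi_\mu,\mathbf{R})=\mathcal{E}_{\mathcal{G}}(\mu).
\end{equation*}
So $\psi_n:=\xi_n(t_n)$ is a minimizing sequence, and by Proposition~\ref{prop: minimizing_1} a subsequence is either convergent or runaway. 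Convergence yields a ground state. If $\mathcal{G}$ is not a bubble tower, no ground state exists, which is a contradiction. If $\mathcal{G}$ is a bubble tower, then $\psi_n\to e^{i\theta}\Phi_\mu$ in $H^1$, so $\varepsilon=D(\psi_n)\to D(e^{i\theta}\Phi_\mu)\ge d_0>2\varepsilon$, again a contradiction. The first-exit-time choice is what lets us avoid any extra almost-conserved quantity here.

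It remains to exclude the runaway case along a half-line $e^*$ (after a further subsequence). This is the step requiring the most care, since runaway only gives smallness in $L^q$, while $D$ is measured in $H^1$. Fix $T>0$ and a cutoff $\chi$ on $e^*$ with $\chi=0$ on $[0,T]$, $\chi=1$ on $[2T,\infty)$ and $|\chi'|\le 1/T$. Let $w_n=\chi(\psi_n)_{e^*}$, extended by zero to $\mathbf{R}$. Then
\begin{equation*}
\|w_n'\|_{L^2}\le\|\chi(\psi_n)'_{e^*}\|_{L^2}+T^{-1}\|\psi_n\|_{L^2},
\end{equation*}
and the $L^2$ and $L^p$ norms of $\psi_n$ on $\mathcal{B}^*\cup[0,2T]$ tend to zero. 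Hence
\begin{equation*}
E(w_n,\mathbf{R})\le E(\psi_n,\mathcal{G})-\tfrac12\|\psi_n'\|^2_{L^2(\mathcal{B}^*\cup[0,T])}+O(T^{-1})+o(1).
\end{equation*}
Since $M(w_n)\to\mu$, we have $E(w_n,\mathbf{R})\ge E(\phi_{M(w_n)},\mathbf{R})\to E(\phi_\mu,\mathbf{R})$. We conclude that $\limsup_n\|\psi_n'\|^2_{L^2(\mathcal{B}^*\cup[0,T])}=O(1/T)$. Choosing $T=T_n\to\infty$ slowly (a diagonal argument) gives $\|\psi_n\|_{H^1(\mathcal{B}^*)}\to0$ and $\|(\psi_n)_{e^*}-w_n\|_{H^1(\mathbf{R}_+)}\to0$. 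Consequently $(w_n)$ is a minimizing sequence on $\mathbf{R}$.

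By the classical Cazenave--Lions result on the line (all minimizing sequences are relatively compact up to translation, and the minimizers form $\mathcal{O}_\mu$), there exist $\theta_n,c_n$ with $\|w_n-e^{i\theta_n}\phi_\mu(\cdot-c_n)\|_{H^1(\mathbf{R})}\to0$. Runaway implies $\|\psi_n\|_{L^2(0,t)}\to0$ for every fixed $t$, which forces $c_n\to\infty$; in particular $c_n>M$ eventually, and the part of the soliton on $(-\infty,0)$ is negligible. Taking $e_*=e^*$ in $D$ then gives $D(\psi_n)\to0$, contradicting $D(\psi_n)=\varepsilon$. This proves the lemma. Identifying $e_*(t)$ with the original half-line $e$ is left to Proposition~\ref{main_prop}, presumably via a continuity-in-time argument, since passing between half-lines would require crossing the core.
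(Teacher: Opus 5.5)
Your proof is correct, and it differs from the paper's in two places.

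First, the bubble-tower case. The paper takes arbitrary bad times $t_n$ and excludes convergence of $\xi_n(t_n)$ to $e^{i\theta}\Phi_\mu$ with an auxiliary functional $F(u)=\int_{\mathcal G}|u|\Phi_\mu\,dx$. It shows $F(u_{0,n})\to 0$ by a tail estimate, notes $F(\xi_n(t_n))\to\mu$, and finds intermediate times where $F=\mu/2$; at those times neither alternative of Proposition~\ref{prop: minimizing_1} can hold. You instead take $t_n$ as a first exit time, with $D(\xi_n(t_n))=\varepsilon$ exactly, so $D$ itself plays the role of $F$: runaway forces $D\to 0$, and convergence forces $D\to d_0>2\varepsilon$. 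One continuity argument then handles both alternatives, and no separate estimate of $F(u_{0,n})$ is needed. Both routes use uniqueness of the ground state up to phase: you need it for $d_0>0$, the paper to evaluate $F$ at the limit. The paper's choice has the advantage that $F$ is reused verbatim in Proposition~\ref{main_prop_2}.

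Second, the runaway step. The paper unfolds $\xi_n(t_n)$ along a path $\mathcal P$ joining $e^*$ to a second half-line, splitting into the cases $e^*\neq e$ and $e^*=e$. It gets $\|\xi_n'(t_n)\|_{L^2(\mathcal B_{\mathcal P})}\to 0$ from the energy deficit. It then recovers smallness on the rest of $\mathcal P$ from convergence of the unfolded function to a soliton escaping to $+\infty$. Your cutoff on $e^*$ alone needs neither a second half-line nor the case split, and uses only runaway together with $\mathcal E_{\mathcal G}(\mu)=E(\phi_\mu,\mathbf R)$.

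Two minor points in that step. For fixed $T$, the $\chi'$ contribution is in fact $o(1)$ by runaway, not just $O(T^{-1})$. And to get $\|(1-\chi)\psi_n'\|_{L^2}\to 0$ you need your derivative bound on $\mathcal B^*\cup[0,2T_n]$ rather than on $\mathcal B^*\cup[0,T_n]$; the same diagonal choice of $T_n$ provides this.
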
 
\begin{proof}
    We proceed by contradiction. We suppose there exists $\varepsilon>0$ and $M>0$, a sequence of initial data $\{u_{0,n}\} \subset H^1(\mathcal{G})$ and a sequence of positive numbers $c_n \to +\infty$ as $n \to \infty$ satisfying
\begin{equation}
    ||u_{0,n}||_{H^1(\mathcal{B})} + ||(u_{0,n})_{e} - \phi_{\mu}(\cdot-c_n)||_{H^1(\mathbf{R_+})} \to 0, \qquad \text{as} \quad n \to +\infty.
\end{equation}
for which, if for any $n \in \mathbf{N}$ the map $\xi_n(t) \subset C(\mathbf{R}, H^1(\mathcal{G}))$ is the unique solution to (\ref{NLS_eq}) with initial datum $u_{0,n}$, then there exists a sequence of times $\{t_n\} \subset \mathbf{R}_+$ such that for any half-line $e_*$
\begin{equation}
    ||\xi_n(t_n)||_{H^1(\mathcal{B_*})} + \inf_{\theta \in \mathbf{R}, \  c>M}||(\xi(t_n))_{e_*} - e^{i\theta}\phi_{\mu}(\cdot-c)||_{H^1(\mathbf{R_+})} \geq \varepsilon.
    \label{contrad_assumption}
\end{equation}
Firstly, we prove that $\{u_{0,n}\}$ is a minimizing sequence on $\mathcal{G}$. We have, by the contradiction hypothesis,
$$||u_{0,n}||^2_{L^2(\mathcal{G})} = ||u_{0,n}||^2_{L^2(\mathcal{B})} + ||(u_{0,n})_e||^2_{L^2(\mathbf{R}_+)} \to \mu, \qquad \text{as} \quad n \to \infty. $$
For the energy, we have
\begin{equation}
    ||u_{0,n}||^p_{L^p(\mathcal{B})} \leq C ||u_{0,n}||^{\frac{p}{2}+1}_{L^2(\mathcal{B})} ||u'_{0,n}||^{\frac{p}{2}-1}_{L^2(\mathcal{B})} \to 0, \qquad \text{as} \quad n \to \infty,
\end{equation}
where we used the Gagliardo-Nirenberg inequality for the connected non-compact graph $\mathcal{B}$. We conclude that
$$||u_{0,n}||^p_{L^p(\mathcal{G})} = ||u_{0,n}||^p_{L^p(\mathcal{B})}+||(u_{0,n})_e||^p_{L^p(\mathbf{R}_+)} \to ||\phi_{\mu}||^p_{L^p(\mathbf{R})}, \qquad \text{as} \quad n \to \infty.$$
Finally, we have
\begin{equation}
    ||u'_{0,n}||^2_{L^2(\mathcal{G})} = ||u'_{0,n}||^2_{L^2(\mathcal{B})} + ||(u'_{0,n})_e||^2_{L^2(\mathbf{R}_+)} \to ||\phi'_{\mu}||^2_{L^2(\mathbf{R})}, \qquad \text{as} \quad n \to \infty.
\end{equation}
We conclude that
\begin{equation}\label{important}
    E(u_{0,n}, \mathcal{G}) \to E(\phi_{\mu}, \mathbf{R}) = \mathcal{E}_{\mathcal{G}}(\mu) \qquad \text{and} \qquad M(u_{0,n}) \to \mu, \qquad \text{as} \quad n \to \infty.
\end{equation}
By conservation of mass and energy, we have
\begin{equation}
    E(\xi_n(t_n), \mathcal{G}) = E(u_{0,n}, \mathcal{G}) \to \mathcal{E}_{\mathcal{G}}(\mu) \qquad \text{and} \qquad M(\xi_n(t_n)) = M(u_{0,n}) \to \mu, \qquad \text{as} \quad n \to \infty.
\end{equation}
So $\{\xi_n(t_n)\}$ is a minimizing sequence. Using Proposition \ref{prop: minimizing_1}, we suppose $\{\xi_n(t_n)\}$ is runaway, escaping along some half-line of $\mathcal{G}$, that we call $e_*$. Suppose for the moment that $e$ is different from $e_*$. Consider a path $\mathcal{P} =(e,...,e_*)$ connecting the half-lines $e$ and $e_*$. We define $\xi_n(t_n)|_{\mathbf{R}}$ as the function in $H^1(\mathbf{R})$ obtained by restricting $\xi_n(t_n)$ to the path $\mathcal{P}$, as in section \ref{sub-graphs}. We notice that, with this ordering of $\mathcal{P}$, the half-line $e$ is associated with the portion $\mathbf{R_-}$ of the real line. We would like to show that $\{\xi_n(t_n)|_{\mathbf{R}}\}$ is a minimizing sequence on $\mathbf{R}$, i.e. it satisfies $E(\xi_n(t_n)|_{\mathbf{R}},\mathbf{R}) \to E(\phi_{\mu},\mathbf{R})$ and $||\xi_n(t_n)|_{\mathbf{R}}||_{L^2(\mathbf{R})} \to \mu$ as $n \to \infty$.\\ We call $\mathcal{B}_{\mathcal{P}}$ the (possibly disconnected) graph obtained by removing the path $\mathcal{P}$ from the graph $\mathcal{G}$. By the runaway hypothesis, we have, for any $2 \leq q \leq \infty$,
\begin{equation}
||\xi_n(t_n)||^q_{L^q(\mathcal{B}_{\mathcal{P}})} \to 0 \qquad \text{as} \quad n \to \infty.
\label{eq: Lq decay}
\end{equation}
This implies that 
\begin{equation}
    ||\xi_n(t_n)|_{\mathbf{R}}||^2_{L^2(\mathbf{R})} = M(\xi_n(t_n)) - ||\xi_n(t_n)||^2_{L^2(\mathcal{B}_{\mathcal{P}})} \to \mu \qquad \text{as} \quad n \to \infty.
\end{equation}
Regarding the kinetic energy, we would like to show that, up to subsequence, $\lim_{n \to \infty}||\xi'_n(t_n)||^2_{L^2(\mathcal{B}_{\mathcal{P}})} =0$. Suppose by contradiction that $\liminf_{n \to \infty}||\xi'_n(t_n)||^2_{L^2(\mathcal{B}_{\mathcal{P}})} = a >0$. We choose $n_0 \in \mathbf{N}$ such that $||\xi'_n(t_n)||^2_{L^2(\mathcal{B}_{\mathcal{P}})} \geq a/2$ for any $n \geq n_0$. By choosing $n_0$ possibly bigger, we have by means of (\ref{eq: Lq decay})
\begin{equation}
    E(\xi_n(t_n)|_{\mathbf{R}}, \mathbf{R}) = E(\xi_n(t_n),\mathcal{G}) - \frac{1}{2}||\xi'_n(t_n)||^2_{L^2(\mathcal{B}_{\mathcal{P}})}+ \frac{1}{p} ||\xi_n(t_n)||^p_{L^p(\mathcal{B}_{\mathcal{P}})} \leq E(\phi_{\mu}, \mathbf{R}) - \frac{a}{8} \qquad \forall n \geq n_0.
\end{equation}
By continuity of the map $\nu \to E(\phi_{\nu}, \mathbf{R})$ for any $\nu >0$, this leads to a contradiction. Indeed, the mass of $\xi_n(t_n)|_{\mathbf{R}}$ is arbitrarily close to $\mu$ for $n$ big enough, but the energy $E(\xi_n(t_n)|_{\mathbf{R}}, \mathbf{R})$ remains smaller than $E(\phi_{\mu}, \mathbf{R})$ by a factor $a/8>0$ for any $n$ big enough. We conclude that $\liminf_{n \to \infty}||\xi'_n(t_n)||^2_{L^2(\mathcal{B}_{\mathcal{P}})} = 0.$ Up to choosing a subsequence, we have 
\begin{equation}
    \lim_{n \to \infty}||\xi'_n(t_n)||^2_{L^2(\mathcal{B}_{\mathcal{P}})} = 0.
    \label{kin decay}
\end{equation}
We conclude that $E(\xi_n(t_n)|_{\mathbf{R}}, \mathbf{R}) \to E(\phi_{\mu}, \mathbf{R})$ as $n \to \infty$, and that $\{\xi_n(t_n)|_{\mathbf{R}}\} \subset H^1(\mathbf{R})$ is a minimizing sequence. By the concentration--compactness principle on the real line \cite{lions}, there exist two sequences $\{\theta_n\} \subset \mathbf{R}$ and $\{x_n\} \subset \mathbf{R}$ such that
\begin{equation}
    ||\xi_n(t_n)|_{\mathbf{R}} - e^{i\theta_n}\phi_{\mu}(\cdot - x_n)||_{H^1(\mathbf{R})} \to 0, \qquad \text{as} \quad n \to \infty.
\end{equation}
Since the sequence $\{\xi_n(t_n)\} \subset H^1(\mathcal{G})$ is runaway along $e_*$, we have $x_n \to +\infty$ as $n \to \infty$. This implies that
\begin{equation}
||\xi_n(t_n)|_{\mathbf{R}}||_{H^1(\mathbf{R}_-)} \to 0, \qquad \text{as} \quad n \to \infty,
\label{kin_R-}
\end{equation}
and that 
\begin{equation}
||\xi_n(t_n)|_{\mathbf{R}}||_{H^1_{loc}(\mathbf{R})} \to 0, \qquad \text{as} \quad n \to \infty.
\label{kin_loc}
\end{equation}
We conclude that
\begin{equation}
    ||(\xi_n(t_n))_{e_*} - e^{i\theta_n} \phi_{\mu}(\cdot-x_n)||_{H^1(\mathbf{R}_+)} \leq ||\xi_n(t_n)|_{\mathbf{R}} - e^{i\theta_n} \phi_{\mu}(\cdot-x_n)||_{H^1(\mathbf{R}_+)} \to 0, \qquad \text{as} \quad n \to \infty,
\end{equation}
and, together with (\ref{eq: Lq decay}), (\ref{kin decay}),  that
\begin{equation}
    ||\xi_n(t_n)||_{H^1(\mathcal{B}_*)} \to 0, \qquad \text{as} \quad n \to \infty.
\end{equation}
This conclusion is in contradiction with the assumption in (\ref{contrad_assumption}).\\ Suppose now that $e$ coincides with $e_*$, i.e. the sequence $\{\xi_n(t_n)\}$ escapes along the half-line $e$. In this case we choose another half-line $d$ of $\mathcal{G}$ and we consider a path $\mathcal{P}$ connecting $d$ to $e$. We can now repeat the argument above to conclude
\begin{equation}
    ||(\xi_n(t_n))_{e} - e^{i\theta_n} \phi_{\mu}(\cdot-x_n)||_{H^1(\mathbf{R}_+)} \to 0, \qquad \text{as} \quad n \to \infty,
\end{equation}
and 
\begin{equation}
    ||\xi_n(t_n)||_{H^1(\mathcal{B})} \to 0, \qquad \text{as} \quad n \to \infty.
\end{equation}
Also in this case we reach a contradiction with the assumption in (\ref{contrad_assumption}).\\
We conclude that either the assumption in (\ref{contrad_assumption}) or the runaway behavior for the minimizing sequence $\{\xi_n(t_n)\}$ is excluded. If $\mathcal{G}$ is \textit{not} a bubble tower graph, the minimizing sequence $\{\xi_n(t_n)\}$ is runaway (due to the absence of ground states). This leads us to conclude that, for $\mathcal{G}$ different from a bubble tower graph, the assumption in (\ref{contrad_assumption}) leads to contradiction and the lemma is proved.\\ Let's prove the lemma in the case $\mathcal{G}$ is a bubble tower graph and $e$ is one of its half-lines. We need to show that the minimizing sequence $\{\xi_n(t_n)\} \subset H^1(\mathcal{G})$ cannot be convergent. We proceed by contradiction. Suppose there exists $\theta \in \mathbf{R}$ such that 
\begin{equation}
    ||\xi_n(t_n)-e^{i\theta}\Phi_{\mu}||_{H^1(\mathcal{G})} \to 0 \qquad \text{as} \quad n \to \infty.
\end{equation}
where $\Phi_{\mu} \in H^1(\mathcal{G})$ is the positive ground state in (\ref{ground_states}).
Consider the functional $F:L^2(\mathcal{G}) \to \mathbf{R}$ defined as
\begin{equation}\label{F}
    F(u) = \int_{\mathcal{G}}|u|\Phi_{\mu}dx.
\end{equation}
where $\Phi_{\mu}$ is the positive ground state from Proposition \ref{ground_states}. The functional $F$ is continuous in $L^2(\mathcal{G})$, indeed if $f,g \in L^2(\mathcal{G})$
\begin{equation}
    \begin{split}
        |F(f) - F(g)| &= |\int_{\mathcal{G}}(|f|-|g|) \Phi_{\mu} dx| \leq \int_{\mathcal{G}}\bigl| |f|-|g|\bigr| \Phi_{\mu} dx \\ & \leq \int_{\mathcal{G}}\bigl|f-g\bigr| \Phi_{\mu} dx  \leq ||f-g||_{L^2(\mathcal{G})}||\Phi_{\mu}||_{L^2(\mathcal{G})}.
    \end{split}
\end{equation}
We conclude that $F(\xi_n(t_n)) \to \mu$ as $n \to +\infty$. We choose $n_0 \in \mathbf{N}$ such that $F(\xi_n(t_n)) \geq \mu /2$ for any $n \geq n_0$. Then we have $F(u_{0,n}) \to 0$ as $ n \to \infty$. Indeed, we have
\begin{equation}
    \int_{\mathcal{B}}|u_{0,n}|\Phi_{\mu}dx \to 0 \qquad \text{as} \quad n \to \infty,
\end{equation}
where the integral is computed along every edge of $\mathcal{G}$ except for the half-line $e$.
Then, for $l>0$,
\begin{equation}
    \begin{split}
        \int_{\mathbf{R_+}}(u_{0,n})_e(\Phi_{\mu})_edx &=\int_{[0,l]} (u_{0,n})_e(\Phi_{\mu})_e dx + \int_{l}^{+\infty}(u_{0,n})_e(\Phi_{\mu})_edx \\ & \leq ||(u_{0,n})_e||_{L^2([0,l])} \sqrt{\mu} + ||u_{0,n}||_{L^{\infty}(\mathcal{G})} \int_{l}^{+\infty}(\Phi_{\mu})_edx \\ & \leq \delta(n,l)\sqrt{\mu} + C \int_{l}^{+\infty}(\Phi_{\mu})_edx 
    \end{split}
\end{equation}
where $\delta(n,l) = ||(u_{0,n})_e||_{L^2([0,l])}$ and $C > ||u_{0,n}||_{L^{\infty}(\mathcal{G})}$ for any $n \in \mathbf{N}$. The second integral can be made arbitrarily small, by choosing $l$ big enough. Then, for any given $l>0$, we have $\delta(n,l) \to 0$ as $n \to \infty$, and this shows the desired limit. We choose $n_1 \geq n_0$ such that $F(u_{0,n}) \leq \mu/2$ for $n \geq n_1$.\\
By continuity of the map $t \to F(\xi_n(t))$ for $t \in \mathbf{R}$, there exists, for any $n \geq n_1$, an intermediate time $t_{n}^* \in [0,t_n]$ such that $F(\xi_n(t_n^*)) = \mu /2$. By conservation of mass and energy, the sequence $\{\xi_n(t_n^*)\} \subset H^1(\mathcal{G})$ is a minimizing sequence. But this leads to a contradiction, by means of Proposition \ref{concentration}. Indeed, if $\{\xi_n(t_n^*)\}$ is runaway, then $F(\xi_n(t_n^*)) \to 0$ as $n \to \infty$; if $\{\xi_n(t_n^*)\}$ is convergent, then $F(\xi_n(t_n^*)) \to \mu$ as $n \to \infty$. This implies that the minimizing sequence $\{\xi_n(t_n)\}$ cannot be convergent and it is runaway. Also in the case of bubble tower graphs, we conclude that the assumption in (\ref{contrad_assumption}) leads to a contradiction, and this proves the lemma.
\end{proof}
\begin{proof}[Proof of Proposition \ref{main_prop}.] In order to prove Proposition \ref{main_prop}, it is sufficient to prove that, under the hypotheses of Lemma \ref{weak_lemma}, the inequality (\ref{weaker}) holds with $e_*=e$ for every $t \geq 0$. Moreover, it is sufficient to prove Proposition \ref{main_prop} for $\varepsilon>0$ small enough.\\
Given $\mu >0$, we call $C=||\phi_{\mu}||_{H^1(\mathbf{R}_+)}$. For $0<\varepsilon < C/4$ and $M>0$, we consider $\delta >0 $ and $c_{\delta}>0$ as in Lemma \ref{weak_lemma}. In particular, we consider $\delta < C/4$. For convenience, we define the map $t \to e_*(t)$ from the interval $[0,+\infty)$ to the set of edges $E$ of $\mathcal{G}$. This map associates to each time $t$ the edge $e_*(t)$ for which the statement (\ref{weaker}) holds at time $t$, i.e. the edge along which the soliton is located. For each $t \geq 0$ there exists in fact a unique edge $e_*(t)$ for which \eqref{weaker} holds, provided $\varepsilon$ is chosen small enough. Indeed, if two different half-lines satisfied \eqref{weaker} at the same time, then the corresponding restrictions of $\xi(t)$ would both be within $\varepsilon$ of translates of $\phi_\mu$, and therefore each of them would have $H^1(\mathbf{R}_+)$-norm larger than $C-\varepsilon$. This is impossible when $\varepsilon<C/4$, because the two half-lines are disjoint and the total $H^1$-distance between the corresponding soliton profiles would be too large.\\ At $t =0$ we have $e_*(0)=e$. We define $T_+ = \sup\{ \ T >0, \ \text{s.t.} \quad e_*(t)=e \quad \forall t \in [0,T)\}$. Suppose $T_+ =0$. Then there exists a sequence of times $t_n \to 0$ such that $e_*(t_n) \neq e$ for every $n \in \mathbf{N}$. This would imply the discontinuity of the map $t \to \xi(t)$ in $H^1(\mathcal{G})$ at $t=0$. Indeed, we have $||(\xi(t_n))_e||_{H^1(\mathbf{R}_+)} < \varepsilon$ for any $n \in \mathbf{N}$ and, for a certain $L > c_{\delta}$,
\begin{equation}
   ||(\xi(0))_e||_{H^1(\mathbf{R}_+)} \geq  ||\phi_{\mu}(\cdot-L)||_{H^1(\mathbf{R}_+)} - \delta \geq C-\frac{C}{4}  > \varepsilon
\end{equation}
We conclude $T_+>0$. Suppose now $T_+ < +\infty$. Then we have $e^*(t) = e$ for every $t < T_+$. If $e_*(T_+) \neq e$, we have a contradiction. Indeed, we could consider a sequence of times $s_n <T_+$ with $s_n \to T_+$, such that $e_*(s_n) = e$ for every $n \in \mathbf{N}$, and this is again forbidden by the continuity of the map $t \to \xi(t)$ in $H^1(\mathcal{G})$ at $t = T_+$. Finally, by repeating the argument we used at $t = 0$, we can show that there exists $\delta t >0$ such that $e_*(t)=e$ for $t \in [0,T_++\delta t)$. Hence $[0,T_+]$ is not maximal and we conclude that $T_+ = +\infty$.
\end{proof}
\begin{remark}\label{esshyps}
The truly essential properties needed in the proof of Lemma \ref{mainlemma} and of Proposition \ref{main_prop_1} are the following two: 1) the graph $\mathcal{G}$ has at least two half-lines; 2) the infimum $\mathcal{E}_{\mathcal{G}}(\mu)$ of the energy at mass $\mu$ is not below the energy $E(\phi_{\mu},\mathbf{R})$ of the soliton on the line (see equation \eqref{important}). This observation in particular shows that Assumption H  is not a necessary hypothesis. Indeed, as we will discuss in Section 6, there exist graphs which violate Assumption H, but satisfy point 1) and 2) above (and so for which Proposition \ref{main_prop_1} applies). On the other hand, 2) still rules out all the cases where $\mathcal{G}$ has ground states with energy strictly below $E(\phi_{\mu},\mathbf{R})$. Despite these extensions, the class of graphs satisfying Assumption H seems a convenient and fairly general setting to work in, containing both the cases of absence and presence of a ground state. In particular, notice that the argument needed to treat  the latter case never makes use of the special structure of the tower of bubbles.
\end{remark}
\subsection{On the dynamics of slow solitons}\label{SlowS}
In this subsection we investigate, by means of Proposition \ref{main_prop}, the dynamics of slow solitons on graphs that satisfy assumption H (assuming $\mathcal{G} \neq \mathbf{R}$). More precisely, we shall consider an approximate soliton, initially supported on a half-line $e_*$ of $\mathcal{G}$, that runs toward the nearest vertex of the graph with velocity $v >0$, or that escapes from it with velocity $v<0$. In both cases, we show that when the velocity $v \in \mathbf{R}$ is small enough, the slow soliton preserves its shape in time and it remains confined on the half-line $e_*$.\\
Consider a graph $\mathcal{G}$ as in Proposition \ref{main_prop}, and call $e_*$ one of its half-lines. Let $\chi \in C^{\infty}(\mathbf{R}_+)$ be a smooth cut-off such that $\chi(x) =0$ on $(0,1)$ and $\chi(x) = 1$ on $(2,+\infty)$. Consider  $\psi_{(x_0,v)} \in H^1(\mathcal{G})$ of the form 
\begin{equation}
    (\psi_{(x_0,v)})_e(x) = \begin{cases}
        e^{-i\frac{v}{2}x}\chi(x)\phi_{\mu}(x-x_0) \qquad\text{if} \quad e=e_*\\
        0 \qquad \qquad \qquad \qquad \ \ \qquad \text{if} \quad e \neq e_*.
    \end{cases}
	\label{slow_soliton}
\end{equation}
where $x_0 \in  \mathbf{R_+}$ and $v \in \mathbf{R}$. The function $\psi_{(x_0,v)}$ represents the initial datum of an approximate soliton supported on $e_*$ and located at $x_0 >0$, running towards the nearest vertex of the graph when $v>0$ and escaping from it when $v<0$. We have the following proposition.

\begin{proposition}\label{prop:slow-solitons}
Let $\mu>0$. Assume $\mathcal{G}$ is a connected graph that satisfies assumption H, with $\mathcal{G} \neq \mathbf{R}$. Let $e_*$ be one of its half-lines and let $\mathcal{B}_*$ be the graph obtained from $\mathcal{G}$ by removing $e_*$. Let $\psi_{(x_0,v)} \in H^1(\mathcal{G})$ be an initial datum as in \eqref{slow_soliton}, for $x_0 \in \mathbf{R}_+$ and $v \in \mathbf{R}$. Let $\xi(t)$ be the unique solution to \eqref{NLS_eq} with $\xi(0) = \psi_{(x_0,v)}$. For any $\varepsilon >0$ and $M>0$, there exists a minimal distance $D>0$ and a critical velocity $v_{cr}>0$ such that the following holds true: if the parameters $(x_0,v)$ satisfy $x_0 > D$ and $|v|<v_{cr}$, then the solution $\xi(t)$ satisfies
\begin{equation}
	||\xi(t)||_{H^1(\mathcal{B}_*)} + \inf_{\theta \in \mathbf{R}, \  c>M}||(\xi(t))_{e_*} - e^{i\theta}\phi_{\mu}(\cdot-c)||_{H^1(\mathbf{R_+})} < \varepsilon, \qquad \forall t \geq 0. 
\end{equation}
\end{proposition}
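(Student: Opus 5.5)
The plan is to reduce Proposition \ref{prop:slow-solitons} directly to Proposition \ref{main_prop}, applied with the half-line $e=e_*$ and $\mathcal{B}=\mathcal{B}_*$. Given $\varepsilon>0$ and $M>0$, Proposition \ref{main_prop} yields $\delta>0$ and $c_\delta>0$. It then suffices to find $D\geq c_\delta$ and $v_{cr}>0$ such that, for $x_0>D$ and $|v|<v_{cr}$, the initial datum satisfies
\begin{equation*}
\|\psi_{(x_0,v)}\|_{H^1(\mathcal{B}_*)}+\|(\psi_{(x_0,v)})_{e_*}-\phi_\mu(\cdot-x_0)\|_{H^1(\mathbf{R}_+)}<\delta .
\end{equation*}
The conclusion then follows with $L=x_0$. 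The first term vanishes identically, since $\psi_{(x_0,v)}$ is zero off $e_*$. Note also that $\psi_{(x_0,v)}$ is continuous at the vertex, because $\chi=0$ near $x=0$, so it does belong to $H^1(\mathcal{G})$.

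For the second term I would use the triangle inequality and split
\begin{equation*}
e^{-i\frac v2 x}\chi\phi_\mu(\cdot-x_0)-\phi_\mu(\cdot-x_0)=\bigl(e^{-i\frac v2 x}-1\bigr)\chi\phi_\mu(\cdot-x_0)+(\chi-1)\phi_\mu(\cdot-x_0).
\end{equation*}

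\emph{Cut-off term.} The function $(\chi-1)\phi_\mu(\cdot-x_0)$ is supported in $[0,2]$, so its $H^1(\mathbf{R}_+)$ norm is bounded by a constant (depending on $\|\chi'\|_\infty$) times $\|\phi_\mu\|_{W^{1,\infty}([-x_0,2-x_0])}$. This tends to $0$ as $x_0\to\infty$ by the exponential decay of $\phi_\mu$ and $\phi_\mu'$. We choose $D\ge c_\delta$ so that this term is below $\delta/2$ for all $x_0>D$.

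\emph{Phase term.} This is the step needing care, because the phase factor $e^{-i\frac v2 x}-1$ is not small uniformly in $x$: it grows with $x$ up to size $2$. Its derivative equals $-\tfrac{iv}{2}e^{-i\frac v2 x}\chi\phi_\mu(\cdot-x_0)+(e^{-i\frac v2 x}-1)(\chi\phi_\mu(\cdot-x_0))'$. The first piece has $L^2$ norm at most $\frac{|v|}{2}\sqrt{\mu}$. For the other pieces I would use $|e^{-i\frac v2 x}-1|\le \frac{|v|}{2}|x|$. After the shift $y=x-x_0$, their norms are bounded by
\begin{equation*}
\tfrac{|v|}{2}\bigl(\|(x_0+|y|)\phi_\mu\|_{L^2}+\|(x_0+|y|)(\chi\phi_\mu)'\|_{L^2}\bigr)\lesssim |v|(1+x_0).
\end{equation*}

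The obstacle is this dependence on $x_0$. Because of it, one cannot fix $v_{cr}$ independently of the position. I see two options. (a) Restrict to $x_0\in(D,2D]$, or more generally let $v_{cr}$ depend on an upper bound for $x_0$. (b) Use the phase/translation freedom instead. Proposition \ref{main_prop} compares with $\phi_\mu(\cdot-L)$ without a phase, but $e^{-i\frac v2 x}=e^{-i\frac v2 x_0}e^{-i\frac v2 (x-x_0)}$, and the factor $e^{-i\frac v2 x_0}$ is a constant phase. I would pursue (b): by gauge invariance of \eqref{NLS_eq}, the solution started from $e^{i\alpha}u_0$ is $e^{i\alpha}\xi(t)$, and the conclusion of Proposition \ref{main_prop} contains $\inf_\theta$. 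So we may apply Proposition \ref{main_prop} to $e^{i\frac v2 x_0}\psi_{(x_0,v)}$. For that datum the phase term becomes $(e^{-i\frac v2 y}-1)\chi\phi_\mu(y)$ in the shifted variable, and its $H^1$ norm is at most $C|v|\bigl(\|(1+|y|)\phi_\mu\|_{L^2}+\|(1+|y|)\phi_\mu'\|_{L^2}+\|\chi'\|_\infty\|(1+|y|)\phi_\mu\|_{L^2}\bigr)$. This is finite by exponential decay and independent of $x_0$.

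Choosing $v_{cr}$ so that this bound is below $\delta/2$ completes the estimate, and the statement follows for all $t\ge0$ from Proposition \ref{main_prop}.
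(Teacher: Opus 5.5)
Your proposal is correct and is essentially the paper's own proof. The paper also reduces to Proposition~\ref{main_prop} after the constant gauge rotation by $e^{i\frac{v}{2}x_0}$ (your option (b)), and then shows that the cut-off contribution is small for large $x_0$ while the phase contribution is $O(|v|)$ uniformly in $x_0$. The only cosmetic difference is that the paper splits your phase term $(e^{-i\frac{v}{2}(\cdot-x_0)}-1)\chi\phi_\mu(\cdot-x_0)$ into two pieces: $(e^{-i\frac{v}{2}(\cdot-x_0)}-1)\phi_\mu(\cdot-x_0)$, bounded by $C_\mu|v|$, and a compactly supported piece near the vertex, which it makes small by taking $x_0$ large with $|v|\le 1$.
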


\begin{remark}Proposition \ref{prop:slow-solitons} shows that, provided the approximate slow soliton is initially located sufficiently far from the nearest vertex and has sufficiently small speed, the corresponding solution $\xi(t)$ remains confined on the original half-line $e_*$ and stays close, in the energy norm, to a rephased and translated soliton profile for all times. In the case $v>0$, namely for an incoming slow soliton, we interpret this as a coarse form of reflection by the collision with the vertex of the graph.
\end{remark}

\begin{remark}
In particular, Proposition~\ref{prop:slow-solitons} rules out two alternative scenarios for sufficiently slow incoming solitons: capture by the compact core and transmission of an $O(1)$ portion of the soliton to the other half-lines. The soliton-like component remains confined on the original half-line for all positive times.
\end{remark} 

\begin{proof} Let $\varepsilon>0$ and $M>0$, and consider $\delta>0$ and $c_{\delta}>0$ as in Proposition \ref{main_prop}. We will use Proposition \ref{main_prop} after a preliminary constant gauge rotation of the initial datum.

For $(x_0,v)$ as in the statement, define
\[
\widetilde\psi_{(x_0,v)} := e^{i\frac{v}{2}x_0}\psi_{(x_0,v)}, \qquad \widetilde\xi(t):=e^{i\frac{v}{2}x_0}\xi(t).
\]
Since equation \eqref{NLS_eq} is gauge invariant, $\widetilde\xi$ is the unique solution to \eqref{NLS_eq} with initial datum $\widetilde\psi_{(x_0,v)}$. Moreover, the conclusion of the proposition is unchanged by multiplication by a constant phase, because
\[
\inf_{\theta\in\mathbf{R},\ c>M}\|\widetilde\xi_{e_*}(t)-e^{i\theta}\phi_\mu(\cdot-c)\|_{H^1(\mathbf{R}_+)}
=
\inf_{\theta\in\mathbf{R},\ c>M}\|\xi_{e_*}(t)-e^{i\theta}\phi_\mu(\cdot-c)\|_{H^1(\mathbf{R}_+)}.
\]
Therefore it is enough to prove that $\widetilde\psi_{(x_0,v)}$ satisfies the hypotheses of Proposition \ref{main_prop}.

On the distinguished half-line $e_*$ we have
\[
(\widetilde\psi_{(x_0,v)})_{e_*}(x)=e^{-i\frac{v}{2}(x-x_0)}\chi(x)\phi_\mu(x-x_0).
\]
Hence
\begin{equation}
\begin{split}
\|(\widetilde\psi_{(x_0,v)})_{e_*}-\phi_\mu(\cdot-x_0)\|_{H^1(\mathbf{R}_+)}
&\leq \|(\chi-1)\phi_\mu(\cdot-x_0)\|_{H^1(\mathbf{R}_+)} \\
&\quad + \|(e^{-i\frac{v}{2}(\cdot-x_0)}-1)\phi_\mu(\cdot-x_0)\|_{H^1(\mathbf{R}_+)} \\
&\quad + \|(e^{-i\frac{v}{2}(\cdot-x_0)}-1)(\chi-1)\phi_\mu(\cdot-x_0)\|_{H^1(\mathbf{R}_+)}.
\end{split}
\label{reflection_comp_blue}
\end{equation}
The first and the third terms in \eqref{reflection_comp_blue} are supported where $\chi\neq 1$, hence on a fixed compact interval near the vertex. Since $\phi_\mu(\cdot-x_0)$ and its derivative converge to zero uniformly on compact sets as $x_0\to+\infty$, there exists $D\geq c_\delta$ such that, for every $x_0>D$ and every $|v|\leq 1$, the sum of those two terms is smaller than $\delta/2$.

For the middle term in \eqref{reflection_comp_blue}, the translation $y=x-x_0$ gives
\[
\|(e^{-i\frac{v}{2}(\cdot-x_0)}-1)\phi_\mu(\cdot-x_0)\|_{H^1(\mathbf{R}_+)}
\leq \|(e^{-i\frac{v}{2}y}-1)\phi_\mu(y)\|_{H^1(\mathbf{R})}.
\]
Using
\[
|e^{-i\frac{v}{2}y}-1|\leq \frac{|v|}{2}|y|,
\qquad
\partial_y\bigl((e^{-i\frac{v}{2}y}-1)\phi_\mu(y)\bigr)= -i\frac{v}{2}e^{-i\frac{v}{2}y}\phi_\mu(y)+(e^{-i\frac{v}{2}y}-1)\phi_\mu'(y),
\]
together with the exponential decay of $\phi_\mu$ and $\phi_\mu'$, we obtain
\[
\|(e^{-i\frac{v}{2}y}-1)\phi_\mu(y)\|_{H^1(\mathbf{R})}\leq C_\mu |v|.
\]
Therefore there exists $v_{cr}>0$ such that, if $|v|<v_{cr}$, then the middle term in \eqref{reflection_comp_blue} is smaller than $\delta/2$.

Since $\widetilde\psi_{(x_0,v)}=0$ on $\mathcal B_*$, we also have $\|\widetilde\psi_{(x_0,v)}\|_{H^1(\mathcal B_*)}=0$. Altogether, for $x_0>D$ and $|v|<v_{cr}$ we get
\[
\|\widetilde\psi_{(x_0,v)}\|_{H^1(\mathcal B_*)}+\|(\widetilde\psi_{(x_0,v)})_{e_*}-\phi_\mu(\cdot-x_0)\|_{H^1(\mathbf{R}_+)}<\delta.
\]
Hence the hypotheses of Proposition \ref{main_prop} are satisfied by $\widetilde\psi_{(x_0,v)}$, and the conclusion follows for $\widetilde\xi$, therefore also for $\xi$.
\end{proof}

\section{Orbital stability of ground states in bubble tower graphs}
\label{section_stability}
In this section we first recall the notion of orbital stability on a metric graph and then prove the orbital stability of the ground state orbit on bubble tower graphs. The main point is that the standard Cazenave--Lions argument does not apply directly, because on bubble tower graphs one has non-compact minimizing sequences escaping along a half-line. We therefore isolate the precise compactness hypothesis needed in the classical argument and then modify the proof accordingly.
\begin{definition}
    Consider a graph $\mathcal{G}$ and a mass $\mu >0$. Let $S_{\mu}$ be the set of ground states at mass $\mu$, and assume $S_{\mu} \neq \emptyset$. We say that $S_{\mu}$ is orbitally stable if for every $\varepsilon>0$, there exists $\delta >0$ such that for any $\Phi_0 \in H^1(\mathcal{G})$, satisfying $\inf_{u \in S_{\mu}}||u-\Phi_0||_{H^1(\mathcal{G})} < \delta$, the unique solution $t\mapsto \Phi(t) \in C(\mathbf{R}, H^1(\mathcal{G}))$ of the NLS equation (\ref{NLS_eq}) with $\Phi(0) = \Phi_0$ satisfies
    \begin{equation}
        \inf_{u \in S_{\mu}}||u-\Phi(t)||_{H^1(\mathcal{G})} < \varepsilon, \qquad \forall t\geq0.
    \end{equation}
\end{definition}
\subsection{The argument of Cazenave-Lions on non-compact graphs}
For the NLS equation \eqref{NLS_eq} on a connected non-compact graph $\mathcal{G}$, the point of the Cazenave--Lions argument for orbital stability is not simply the existence of a ground state at mass $\mu$, but the compactness of every minimizing sequence at that mass. In the present setting the relevant hypothesis is the following.

\begin{itemize}
    \item[(HP)] for some $\mu>0$, every minimizing sequence with asymptotic mass $\mu$ (see Definition~\ref{minimizzanti}) is relatively compact in $H^1(\mathcal{G})$.
\end{itemize}
This is the compactness assumption behind Case~2 on p.~552 of \cite{cazenave_lions}, suitably adapted to the present setting. In particular, (HP) implies the existence of a ground state at mass $\mu$, but the converse is false in general.
Bubble tower graphs provide exactly such a situation. They do admit a ground state at every mass $\mu>0$, but they do not satisfy (HP). Indeed, by Proposition~\ref{prop: energy} one has $\mathcal{E}_{\mathcal{G}}(\mu)=E(\phi_\mu,\mathbf{R})$. As in the proof of Proposition~\ref{energy_runaway}, one can construct a sequence $\{u_n\}\subset H^1_\mu(\mathcal{G})$ supported on a single half-line and escaping to infinity, while asymptotically reconstructing the soliton $\phi_\mu$ on the line. This is a minimizing sequence which is runaway, hence not relatively compact in $H^1(\mathcal{G})$. Therefore the classical Cazenave--Lions argument cannot be applied directly to bubble tower graphs. In the next subsection we adapt the argument and prove Proposition~\ref{main_prop_2}.

\begin{remark}
\label{remark_bubble_tower}
The same argument applies to any non-compact graph $\mathcal{G}$ for which $\mathcal{E}_{\mathcal{G}}(\mu)=E(\phi_\mu,\mathbf{R})$. In that situation there always exist runaway minimizing sequences, so the compactness assumption {\rm(HP)} fails. Consequently, even if such a graph admits ground states, the classical Cazenave--Lions method does not immediately yield orbital stability. Bubble tower graphs are therefore not isolated from this point of view; see Section~\ref{conclusions} for a further example.
\end{remark} 
\subsection{Proof of orbital stability}
\begin{proposition}\label{main_prop_2}
    Let $\mathcal{G}$ be a bubble tower graph and let $\mu>0$. Then the ground state orbit
    \[
    \mathcal{O}'_{\mu}:=\{e^{i\theta}\Phi_{\mu}:\ \theta\in\mathbf{R}\},
    \]
    where $\Phi_\mu$ is given by Proposition~\ref{ground_states}, is orbitally stable.
\end{proposition}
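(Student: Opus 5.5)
We argue by contradiction, following the scheme of the bubble-tower case in Lemma~\ref{weak_lemma} and using the functional $F(u)=\int_{\mathcal{G}}|u|\Phi_\mu\,dx$ from \eqref{F}.

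Suppose the orbit $\mathcal{O}'_\mu$ is not orbitally stable. Then there are $\varepsilon>0$, initial data $\Phi_{0,n}$ with $\inf_\theta\|\Phi_{0,n}-e^{i\theta}\Phi_\mu\|_{H^1(\mathcal{G})}\to 0$, and times $t_n\ge 0$ such that the solutions satisfy
\[
\inf_\theta\|\Phi_n(t_n)-e^{i\theta}\Phi_\mu\|_{H^1(\mathcal{G})}\ge\varepsilon .
\]
Since $\Phi_{0,n}\to\mathcal{O}'_\mu$ in $H^1$, Gagliardo--Nirenberg gives $M(\Phi_{0,n})\to\mu$ and $E(\Phi_{0,n},\mathcal{G})\to E(\Phi_\mu,\mathcal{G})=\mathcal{E}_{\mathcal{G}}(\mu)$. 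By Proposition~\ref{prop: cauchy}, for any choice of times $s_n$ the sequence $\{\Phi_n(s_n)\}$ is a minimizing sequence. Proposition~\ref{prop: minimizing_1} and Proposition~\ref{ground_states} then say that any subsequence has a further subsequence that either converges in $H^1$ to some $e^{i\theta}\Phi_\mu$ or is runaway.

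Apply this to $\{\Phi_n(t_n)\}$. Convergence to $\mathcal{O}'_\mu$ contradicts the choice of $t_n$, so $\{\Phi_n(t_n)\}$ must be runaway. We then compute the limits of $F$ along both sequences.

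\begin{itemize}
\item[(a)] At $t=0$, the $L^2$-Lipschitz bound on $F$ proved in Lemma~\ref{weak_lemma} gives $F(\Phi_{0,n})\to F(\Phi_\mu)=\mu$. Here we use that $F$ is invariant under phase and $\Phi_\mu>0$.
\item[(b)] At $t=t_n$, runaway forces $F(\Phi_n(t_n))\to 0$. Split $F$ into the integral over the compact core plus an initial segment $[0,l]$ of the escape half-line, and the integral over the tail $(l,\infty)$. On the first region the $L^2$ mass tends to zero, so Cauchy--Schwarz kills that part. On the tail, Cauchy--Schwarz gives the bound $\sqrt{\mu}\,\|\Phi_\mu\|_{L^2(l,\infty)}$, which is small for $l$ large because $\Phi_\mu$ is exponentially decaying.
\end{itemize}

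For large $n$, continuity of $t\mapsto\Phi_n(t)$ in $H^1\subset L^2$, together with the continuity of $F$ on $L^2$, lets us apply the intermediate value theorem. This gives $t_n^*\in[0,t_n]$ with $F(\Phi_n(t_n^*))=\mu/2$. The sequence $\{\Phi_n(t_n^*)\}$ is again minimizing, so along a subsequence it is either convergent to the orbit, forcing $F\to\mu$, or runaway, forcing $F\to0$ by the estimate in (b). Both contradict $F=\mu/2$, which proves stability.

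The main obstacle is justifying the dichotomy for minimizing sequences. We need that convergence in Proposition~\ref{prop: minimizing_1} holds in $H^1$ (it does, item ii), and that runaway implies $F\to0$. The latter only needs $L^2$-smallness on bounded regions plus decay of $\Phi_\mu$, both available from Proposition~\ref{concentration}. A secondary check is that a mass only asymptotically equal to $\mu$ causes no issue. This is fine because Proposition~\ref{prop: minimizing_1} is stated for asymptotic mass $\mu$.
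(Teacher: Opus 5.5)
Your proposal is correct and follows essentially the same argument as the paper. The chain is the same: contradiction, the convergence/runaway dichotomy of Proposition~\ref{prop: minimizing_1} forcing $\{\Phi_n(t_n)\}$ to be runaway, and the intermediate-value argument for $F$ at level $\mu/2$ producing a minimizing sequence that can be neither convergent nor runaway.

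The only minor variation is in showing $F\to 0$ under runaway. You use $L^2$-vanishing away from the tail of the escape half-line together with Cauchy--Schwarz, whereas the paper uses $L^\infty_{\mathrm{loc}}$ convergence and an $L^\infty$ bound against the $L^1$ tails of $\Phi_\mu$ on both half-lines. In your version, the vanishing region must contain the entire other half-line, not just the compact core; the runaway alternative in Proposition~\ref{concentration} does guarantee this.
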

\begin{proof}
We argue by contradiction. Assume that $\mathcal{O}'_\mu$ is not orbitally stable. Then there exist $\varepsilon>0$, a sequence of initial data $\{u_{0,n}\}\subset H^1(\mathcal{G})$, and a sequence of times $\{t_n\}\subset\mathbf{R}_+$ such that
\begin{equation}
    \|u_{0,n}-\Phi_\mu\|_{H^1(\mathcal{G})}\to0, \qquad n\to\infty,
\end{equation}
and, if $\xi_n\in C^0(\mathbf{R},H^1(\mathcal{G}))$ denotes the global solution to \eqref{NLS_eq} with initial datum $u_{0,n}$, then
\begin{equation}
    \inf_{u \in \mathcal{O}'_{\mu}}||\xi_n(t_n,\cdot) - u||_{H^1(\mathcal{G})} \geq \varepsilon \qquad \forall n \in \mathbf{N}.
    \label{contradiction}
\end{equation}
Since $u_{0,n}\to\Phi_\mu$ in $H^1(\mathcal{G})$, the Gagliardo--Nirenberg inequality \eqref{gagliardo-nirenberg} implies
\begin{equation}
    M(u_{0,n}) \to M(\Phi_{\mu}) = \mu \quad \text{and} \quad E(u_{0,n},\mathcal{G}) \to E(\Phi_{\mu},\mathcal{G}) = \mathcal{E}_{\mathcal{G}}(\mu),
    \label{limits}
\end{equation}
and conservation of mass and energy implies that
\begin{equation}
    M(\xi_n(t_n)) \to \mu \quad \text{and} \quad E(\xi_n(t_n),\mathcal{G}) \to \mathcal{E}_{\mathcal{G}}(\mu).
\end{equation}
Thus $\{\xi_n(t_n)\}$ is a minimizing sequence at mass $\mu$. By Proposition~\ref{prop: minimizing_1}, every minimizing sequence on a bubble tower graph is either relatively compact in $H^1(\mathcal{G})$ or runaway. We claim that the second alternative is impossible. Suppose by contradiction that $\{\xi_n(t_n)\}$ is runaway. Then, by Proposition~\ref{concentration},
\[
\xi_n(t_n)\to0 \qquad \text{in }L^\infty_{\mathrm{loc}}(\mathcal{G}).
\]
As in the proof of Proposition~\ref{main_prop}, we consider the continuous functional $F:L^2(\mathcal{G})\to\mathbf{R}_+$ given by
\begin{equation}
    F(u) = \int_{\mathcal{G}}|u| \Phi_{\mu} dx,
\end{equation}
Since $u_{0,n}\to\Phi_\mu$ in $H^1(\mathcal{G})$, we have $F(u_{0,n})\to F(\Phi_\mu)=\mu$. Fix $n_0$ so that $F(u_{0,n})\geq \mu/2$ for all $n\geq n_0$. On the other hand, the runaway assumption implies $F(\xi_n(t_n))\to0$ as $n\to\infty$. Indeed, this follows from the local $L^\infty$ convergence to zero together with the exponential decay of $\Phi_\mu$ along the two half-lines. More explicitly, if $e_1$ and $e_2$ denote the half-lines of $\mathcal{G}$, then for every $L>0$ we have
\begin{equation}
    \int_{L}^{+\infty}\bigl|(\xi_n(t_n))_{e_1}\bigr|\ (\Phi_{\mu})_{e_1} dx \leq ||\xi_n(t_n)||_{L^{\infty}(\mathcal{G})} \int_{L}^{+\infty}(\Phi_{\mu})_{e_1} dx,
    \label{part_of_F}
\end{equation}
and similarly for $e_2$.
Since $\{\xi_n(t_n)\}$ is uniformly bounded in $H^1(\mathcal{G})$, the tail integrals in \eqref{part_of_F} can be made arbitrarily small by choosing $L$ large, and this proves $F(\xi_n(t_n))\to0$. Choose $n_1\geq n_0$ so that $F(\xi_n(t_n))\leq \mu/2$ for all $n\geq n_1$. For each such $n$, by continuity of the map $t\mapsto F(\xi_n(t))$, there exists an intermediate time $t_n^*\in[0,t_n]$ such that
\[
F(\xi_n(t_n^*))=\mu/2.
\]
By conservation of mass and energy, the sequence $\{\xi_n(t_n^*)\}$ is again a minimizing sequence. Applying Proposition~\ref{prop: minimizing_1} once more, it is either relatively compact or runaway. If it is relatively compact, then up to a subsequence it converges in $H^1(\mathcal{G})$ to a ground state, hence $F(\xi_n(t_n^*))\to\mu$, a contradiction. If it is runaway, then the same argument as above gives $F(\xi_n(t_n^*))\to0$, again a contradiction. Therefore $\{\xi_n(t_n)\}$ cannot be runaway.
We conclude that $\{\xi_n(t_n)\}$ is relatively compact in $H^1(\mathcal{G})$. Since the ground states at mass $\mu$ form the orbit $\mathcal{O}_\mu'$, there exists $\theta\in\mathbf{R}$ such that, up to a subsequence,
\[
\xi_n(t_n)\to e^{i\theta}\Phi_\mu \qquad \text{in }H^1(\mathcal{G}),
\]
which contradicts \eqref{contradiction}. The proof is complete.
\end{proof}

\section{Extensions and some further problems}
\label{conclusions}

In this section we collect some further results and remarks on the scope of our findings and we also formulate a few open problems. Moreover, we report numerical simulations of the collision of a slow NLS soliton with the compact core of the simplest example of a metric graph satisfying Assumption~H.

\subsection{Generalizations and possible extensions: the line}
We want to extend our results to the relevant case of the NLS on the line with an inhomogeneity, not provided by the compact core of the metric graph, but by an external potential. 

Consider the focusing NLS equation in the presence of an external repulsive potential $V(x)$:
\begin{equation}
    i\partial_tu = -\partial_x^2 u - |u|^{p-2}u +V(x)u.
    \label{eq: NLS_potential}
\end{equation}
We treat two prototypical cases:\\ i) the potential $V$ is continuous non-negative and vanishing at infinity: $V(x)\geq0$ for all $x \in \mathbf{R}$, such that $V(x) \to 0$ as $|x| \to \infty$ and $V$ is different from the zero function;\\ ii) zero-range potential $V(x) = g\delta(x)$, with $g >0$, also called delta potential or, more properly, a point interaction.\\
In both cases it is well known that the Schr\"odinger operator $H=-\Delta+V$, suitably interpreted, is self-adjoint and that the subcritical NLS equation enjoys well posedness in $H^1(\mathbf{R})$.
Moreover the NLS equation conserves the mass $M(u) = ||u||^2_{L^2(\mathbf{R})}$ and the energy
\begin{equation}
    E_V(u) := \frac{1}{2}||u'||^{2}_{L^2(\mathbf{R})}-\frac{1}{p}||u||^{p}_{L^p(\mathbf{R})} + \frac{1}{2}\int_{\mathbf{R}}V(x)|u(x)|^2dx,
\end{equation}
or
\begin{equation}
     E_g(u) := \frac{1}{2}||u'||^{2}_{L^2(\mathbf{R})}-\frac{1}{p}||u||^{p}_{L^p(\mathbf{R})} + \frac{g}{2}|u(0)|^2,
\end{equation}
These facts are well known for the regular potential, while for the delta potential one can see for example \cite{GHW2004, ANV13, CFN17} and references therein.
For convenience, in this section we denote the free energy as
\begin{equation}
     E_0(u) = \frac{1}{2}||u'||^{2}_{L^2(\mathbf{R})}-\frac{1}{p}||u||^{p}_{L^p(\mathbf{R})},
\end{equation}
so that $E_V(u)= E_0(u)+\frac{1}{2}\int_{\mathbf{R}}V(x)|u(x)|^2dx$  for any $u \in H^1(\mathbf{R})$, and similarly for $E_g$.\\
Similarly to \eqref{eq: H1_mu}, we define for any $\mu >0$ the set $H^1_{\mu}(\mathbf{R}):= \{u \in H^1(\mathbf{R}), \ M(u) = \mu\}$, and we consider the following minimization problem:
\begin{equation}
    \inf_{u \in H_{\mu}^1(\mathbf{R})}E_V(u).
\end{equation}
Using the repulsive nature of the potential, and its decay at infinity, we obtain the following simple lemma, the proof of which we provide for completeness.
\begin{lemma}
    For any $\mu >0$, it holds
    \begin{equation}
        \inf_{u \in H_{\mu}^1(\mathbf{R})}E_V(u) = E_0(\phi_{\mu}),
        \label{eq: infimum_pot}
    \end{equation}
    where $ \phi_{\mu}$ represents the soliton on the real line with mass $\mu$. Moreover, the infimum is never achieved. An analogous result holds for the energy $E_g$.
\end{lemma}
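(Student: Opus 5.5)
The plan is to prove the two inequalities $\inf E_V \ge E_0(\phi_\mu)$ and $\inf E_V\le E_0(\phi_\mu)$ separately, and then to exclude attainment by a strictness argument. The delta case $E_g$ is handled by the same argument, with $\int V|u|^2$ replaced by $g|u(0)|^2$.

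\emph{Lower bound.} Since $V\ge 0$ (respectively $g>0$), for every $u\in H^1_\mu(\mathbf{R})$ we have
\[
E_V(u)\ge E_0(u)\ge \inf_{H^1_\mu(\mathbf{R})}E_0=E_0(\phi_\mu),
\]
where the last equality is the variational characterization of the soliton on the line recalled in \eqref{Omu}. Hence $\inf E_V\ge E_0(\phi_\mu)$, and the same holds for $E_g$.

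\emph{Upper bound.} We translate the soliton to infinity. Set $u_n=\phi_\mu(\cdot-x_n)$ with $x_n\to+\infty$. Then $M(u_n)=\mu$ and $E_0(u_n)=E_0(\phi_\mu)$, so it suffices to prove $\int V\phi_\mu(\cdot-x_n)^2\,dx\to0$. Fix $\eta>0$ and choose $R$ with $|V|\le\eta$ on $|x|>R$. Split the integral into $|x|\le R$ and $|x|>R$. The first part is bounded by $2R\,\|V\|_{L^\infty([-R,R])}\sup_{|x|\le R}\phi_\mu(x-x_n)^2$, which tends to $0$ by the exponential decay of $\phi_\mu$. The second part is at most $\eta\mu$. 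For the delta potential the correction term is simply $\frac g2\phi_\mu(-x_n)^2\to0$. This yields \eqref{eq: infimum_pot}.

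\emph{Non-attainment.} This is the step that needs care. Suppose $u\in H^1_\mu(\mathbf{R})$ satisfies $E_V(u)=E_0(\phi_\mu)$. Then
\[
E_0(\phi_\mu)\le E_0(u)\le E_V(u)=E_0(\phi_\mu),
\]
so both inequalities are equalities. The first equality makes $u$ a minimizer of $E_0$ at mass $\mu$, hence $u=e^{i\theta}\phi_\mu(\cdot-x_0)$ for some $\theta,x_0$ by \eqref{Omu}. The second equality gives $\int V|u|^2=0$. However, $|u|^2=\phi_\mu(\cdot-x_0)^2$ is strictly positive everywhere, and $V\ge0$ is continuous and not identically zero, so $V>0$ on some open interval. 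This forces $\int V|u|^2>0$, a contradiction. In the delta case the same argument gives $g|u(0)|^2=g\,\phi_\mu(-x_0)^2>0$, again a contradiction. The only delicate point is therefore invoking the uniqueness of line minimizers up to phase and translation, together with the strict positivity of $\phi_\mu$.
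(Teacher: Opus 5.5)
Your proof is correct and follows the same three steps as the paper's: the lower bound from $V\ge0$ and the minimality of the soliton, the upper bound by sending the mass to infinity where $V$ is small, and non-attainment via uniqueness of line minimizers up to phase and translation. The differences are cosmetic: you translate $\phi_\mu$ itself and use its exponential decay instead of compactly supported approximants, and you spell out the positivity $\int V\phi_\mu(\cdot-x_0)^2>0$ that the paper leaves implicit (the paper writes $E_V(\phi_\mu)$ where $E_V(\phi_\mu(\cdot-x_0))$ is meant).
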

\begin{proof}
   For any $u \in H^1_{\mu}(\mathbf{R})$, we have $E_V(u) \geq E_0(u) \geq E_0(\phi_{\mu})$. The first inequality follows from the fact that $V(x)\geq0$ for all $x \in \mathbf{R}$; the second from the minimality property of $\phi_{\mu}$. Next we consider, as in the proof of Proposition \ref{energy_runaway}, a sequence of functions $u_n \in H^1(\mathbf{R})$ with compact support and mass $M(u_n) = \mu$ for all $n \in \mathbf{N}$, and such that $u_n \to \phi_{\mu}$ in $H^1(\mathbf{R})$ as $n \to +\infty$. We have
    \begin{equation}
        E_0(u_n) \to E_0(\phi_{\mu}), \qquad \text{as} \quad n \to \infty.
    \end{equation}
    Then, we consider a sequence $\{y_{n}\} \subset \mathbf{R}$ such that the support of $u_n(\cdot - y_n)$ is contained in the interval $[n,+\infty)$ for all $n \in \mathbf{N}$. Then, we have 
    \begin{equation}
        \int_{\mathbf{R}}V(x)|u_{n}(x-y_n)|^2dx =  \int_{[n,+\infty)}V(x)|u_{n}(x-y_n)|^2dx \leq ||V||_{L^{\infty}[n,+\infty)}\mu \to 0
    \end{equation}
    as $n \to \infty$. We conclude that $E_V(u_n(\cdot-y_n))-E_0(\phi_{\mu}) \to 0$ as $n \to \infty,$ and that \eqref{eq: infimum_pot} holds.\\
    Next, suppose by contradiction that there exists $u \in H^1_{\mu}(\mathbf{R})$ such that $E_V(u) = E_0(\phi_{\mu})$. Then, since $E_0(u) \leq E_V(u)$, this implies $ E_0(u) \leq E_0(\phi_{\mu})$. Then it must be, up to phase shifts and translations, $u = \phi_{\mu}$. In this case we have $E_V(u) = E_V(\phi_{\mu}) > E_0(\phi_{\mu})$, which is in contradiction with the assumption $E_V(u) = E_0(\phi_{\mu})$. An analogous conclusion and proof holds if $V(x) = g\delta(x)$, with $g >0$.
\end{proof}
We can now state the following proposition, regarding the confinement of approximate solitons by means of the external potential. While the proof does not make use of essentially new ingredients with respect to the already treated case of metric graphs, we feel that it is useful to give it in detail, being the setting and scope a little different. \\
\begin{proposition}\label{Line} Let $\mu >0$. Let $V$ an external potential belonging to one of the following two classes: i) $V$ is a continuous non-negative function, such that $V$ is not identically zero and $V(x) \to 0$ as $|x| \to \infty$; ii) $V$ is a repulsive delta potential, i.e. $V(x) = g \delta(x)$, for $g >0$.\\ Then, for any $\varepsilon>0$ and $M>0$, there exists $\delta>0$ and $c_{\delta}>0$ such that, if $u_0 \in H^1(\mathbf{R})$ satisfies 
\begin{equation}
    ||u_0||_{H^1(\mathbf{R}_-)}+||u_0-\phi_{\mu}(\cdot-L)||_{H^1(\mathbf{R}_+)} < \delta,
\end{equation}
for $L \geq c_{\delta}$, then the unique solution $\xi(t) \in C(\mathbf{R}, H^1(\mathbf{R}))$ to \eqref{eq: NLS_potential} with $\xi(0) = u_0$ satisfies
\begin{equation}
    ||\xi(t)||_{H^1(\mathbf{R}_-)} + \inf_{\theta \in \mathbf{R}, \  \ c > M}||\xi(t) - e^{i\theta}\phi_{\mu}(\cdot-c)||_{H^1(\mathbf{R}_+)} < \varepsilon, \qquad \forall t \geq 0.
\end{equation}
\end{proposition}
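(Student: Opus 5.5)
We will mimic the two-step structure used for Proposition~\ref{main_prop}: first a weak statement in which, at each time, the solution is close to a soliton sitting far out on either $\mathbf{R}_+$ or $\mathbf{R}_-$, and then a continuity argument that pins it to $\mathbf{R}_+$.

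\emph{Weak statement, by contradiction.} Suppose there are $\varepsilon,M>0$, initial data $u_{0,n}$ and $L_n\to+\infty$ with
\[
\|u_{0,n}\|_{H^1(\mathbf{R}_-)}+\|u_{0,n}-\phi_\mu(\cdot-L_n)\|_{H^1(\mathbf{R}_+)}\to0,
\]
and times $t_n\ge0$ at which the weak conclusion fails, on both $\mathbf{R}_\pm$.

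The data form a minimizing sequence for $E_V$. Indeed $M(u_{0,n})\to\mu$ and $E_0(u_{0,n})\to E_0(\phi_\mu)$, exactly as in Lemma~\ref{weak_lemma}. The potential term also vanishes:
\begin{itemize}
\item in case i), $\int V|u_{0,n}|^2\to0$: split at $x=R$ and use $\|V\|_{L^\infty}$, $\|u_{0,n}\|_{L^2(\mathbf{R}_-\cup[0,R])}\to0$, and the smallness of $V$ on $[R,\infty)$;
\item in case ii), $|u_{0,n}(0)|\to0$ by the one-dimensional Sobolev embedding.
\end{itemize}
By the preceding lemma, $E_V(u_{0,n})\to\inf_{H^1_\mu}E_V=E_0(\phi_\mu)$. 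Conservation of mass and energy then shows that $w_n:=\xi_n(t_n)$ satisfies $M(w_n)\to\mu$ and $E_V(w_n)\to E_0(\phi_\mu)$.

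Since $V\ge0$ (resp. $g>0$), we have $E_0(w_n)\le E_V(w_n)$. Together with $E_0(w_n)\ge E_0(\phi_{M(w_n)})\to E_0(\phi_\mu)$, this gives two facts:
\begin{itemize}
\item $\{w_n\}$ is a minimizing sequence for the free problem on $\mathbf{R}$;
\item $\int V|w_n|^2\to0$ (resp. $|w_n(0)|\to0$).
\end{itemize}
By Lions' concentration--compactness on the line there are $\theta_n,x_n$ with $\|w_n-e^{i\theta_n}\phi_\mu(\cdot-x_n)\|_{H^1(\mathbf{R})}\to0$. Note that no vanishing/dichotomy analysis adapted to $V$ is needed: we bypass it entirely by passing to $E_0$.

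\emph{Step I expect to be the main obstacle: showing $|x_n|\to\infty$.} If $x_n$ had a bounded subsequence, then $w_n\to e^{i\theta}\phi_\mu(\cdot-\bar x)$ in $H^1$. In case i), $\int V\phi_\mu(\cdot-\bar x)^2>0$, because $V\not\equiv0$, $V$ is continuous and nonnegative, and $\phi_\mu>0$; this contradicts the vanishing of the potential term. In case ii), $\phi_\mu(-\bar x)>0$ contradicts $|w_n(0)|\to0$. Hence, up to a subsequence, $x_n\to+\infty$ or $x_n\to-\infty$. In either case the solution at time $t_n$ is $o(1)$ in $H^1$ on one half-line and $o(1)$-close to $e^{i\theta_n}\phi_\mu(\cdot\mp|x_n|)$ on the other, with $|x_n|>M$ eventually. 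This contradicts the assumption at $t_n$.

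So the weak lemma holds. For every $t\ge0$, $\xi(t)$ is $\varepsilon$-close either to a soliton centred at $c>M$ on $\mathbf{R}_+$, with small $H^1(\mathbf{R}_-)$ norm, or to the mirror configuration on $\mathbf{R}_-$, with small $H^1(\mathbf{R}_+)$ norm.

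\emph{Continuity argument.} This repeats the proof of Proposition~\ref{main_prop}. Take $\varepsilon<C/4$ with $C=\|\phi_\mu\|_{H^1(\mathbf{R}_+)}$ and $\delta<C/4$. Then the two alternatives are mutually exclusive, since one forces $\|\xi(t)\|_{H^1(\mathbf{R}_+)}>C-\varepsilon$ and the other forces it to be $<\varepsilon$. The alternative holding at $t=0$ is the $\mathbf{R}_+$ one. By continuity of $t\mapsto\xi(t)$ in $H^1(\mathbf{R})$, the set of times where the $\mathbf{R}_+$ alternative holds is both open and closed in $[0,\infty)$, hence it is all of $[0,\infty)$. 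This gives the claim.
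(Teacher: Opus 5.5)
Your proposal is correct and follows essentially the same route as the paper. The weak lemma is proved by contradiction: the data form a minimizing sequence for $E_V$, you pass to the free problem via $E_0\le E_V$, apply Lions' concentration--compactness, and exclude a bounded soliton centre because the potential term would not vanish in the limit. A continuity argument then pins the solution to $\mathbf{R}_+$, and your open--closed (connectedness) formulation of that step, built on the strict inequalities defining the two disjoint alternatives, is a tidier version of the paper's $T_+$ argument.
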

As in the case of graphs, it seems convenient to first focus on the following weaker statement.\\
\begin{lemma}\label{Weaker statement} Let $\mu >0$. Let $V$ an external potential as above. For any $\varepsilon>0$ and $M>0$, there exists $\delta>0$ and $c_{\delta}>0$ such that, if $u_0 \in H^1(\mathbf{R})$ satisfies 
\begin{equation}
    ||u_0||_{H^1(\mathbf{R}_-)}+||u_0-\phi_{\mu}(\cdot-L)||_{H^1(\mathbf{R}_+)} < \delta,
    \label{eq: condition_u0}
\end{equation}
for $L \geq c_{\delta}$, then the unique solution $\xi(t) \in C^0(\mathbf{R}, H^1(\mathbf{R}))$ to \eqref{eq: NLS_potential} with $\xi(0) = u_0$ satisfies, at each time $t \geq 0$, one of the following two alternatives: 
\begin{equation}
    ||\xi(t)||_{H^1(\mathbf{R}_-)} + \inf_{\theta \in \mathbf{R}, \  \ c > M}||\xi(t) - e^{i\theta}\phi_{\mu}(\cdot-c)||_{H^1(\mathbf{R}_+)} < \varepsilon
    \label{eq: weaker_1}
\end{equation}
or
\begin{equation}
    ||\xi(t)||_{H^1(\mathbf{R}_+)} + \inf_{\theta \in \mathbf{R}, \  \ c < - M}||\xi(t) - e^{i\theta}\phi_{\mu}(\cdot-c)||_{H^1(\mathbf{R}_-)} < \varepsilon
    \label{eq: weaker_2}
\end{equation}
\end{lemma}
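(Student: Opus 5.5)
We argue by contradiction, following the scheme of Lemma \ref{weak_lemma}. Suppose the statement fails for some $\varepsilon>0$ and $M>0$. Then there are initial data $u_{0,n}\in H^1(\mathbf{R})$ and shifts $c_n\to+\infty$ with $\|u_{0,n}\|_{H^1(\mathbf{R}_-)}+\|u_{0,n}-\phi_\mu(\cdot-c_n)\|_{H^1(\mathbf{R}_+)}\to0$. For the corresponding solutions $\xi_n$ there are times $t_n\ge0$ at which both \eqref{eq: weaker_1} and \eqref{eq: weaker_2} fail.

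The first step is to show that $\{u_{0,n}\}$ is minimizing for $E_V$ (respectively $E_g$) at mass $\mu$.
\begin{itemize}
\item The mass, the kinetic term and the $L^p$ term converge to those of $\phi_\mu$, exactly as in the graph case. The $L^p$ norm on $\mathbf{R}_-$ vanishes by Gagliardo--Nirenberg.
\item For the potential term, split $\int V|u_{0,n}|^2$ into $(-\infty,R]$ and $(R,\infty)$. The first piece is at most $\|V\|_\infty \|u_{0,n}\|^2_{L^2(-\infty,R)}$, which tends to $0$ because $c_n\to\infty$. The second is at most $\mu\sup_{x>R}V$, which is small for $R$ large.
\item In the delta case, $|u_{0,n}(0)|^2\le C\|u_{0,n}\|_{H^1(\mathbf{R}_-)}^2\to0$.
\end{itemize}
By the previous lemma, $E_V(u_{0,n})\to E_0(\phi_\mu)=\inf_{H^1_\mu}E_V$. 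Conservation of mass and energy then shows that $\{\xi_n(t_n)\}$ is also minimizing for $E_V$.

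The second step extracts the profile. Since $V\ge0$, we have $E_0(\xi_n(t_n))\le E_V(\xi_n(t_n))\to E_0(\phi_\mu)$. Together with $E_0\ge E_0(\phi_\mu)$ at asymptotic mass $\mu$, this gives that $\{\xi_n(t_n)\}$ is a minimizing sequence for the free problem on $\mathbf{R}$, and moreover $\int V|\xi_n(t_n)|^2\to0$ (resp. $|\xi_n(t_n)(0)|\to0$). By Lions' concentration--compactness on the line, there are $\theta_n,x_n$ with $\|\xi_n(t_n)-e^{i\theta_n}\phi_\mu(\cdot-x_n)\|_{H^1(\mathbf{R})}\to0$, up to a subsequence.

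The key point, and the expected main obstacle, is to show that $|x_n|\to\infty$. This replaces the use of absence of ground states in the graph case. Suppose instead that $x_n$ has a bounded subsequence, with $x_n\to\bar x$. Then $\int V|\xi_n(t_n)|^2\to\int V\phi_\mu(\cdot-\bar x)^2>0$, because $V\ge0$ is continuous, not identically zero, and $\phi_\mu>0$ everywhere. In the delta case the analogous limit is $g\phi_\mu(\bar x)^2>0$. Either way this contradicts the vanishing of the potential term. Hence $|x_n|\to\infty$, and passing to a subsequence we may assume $x_n\to+\infty$ or $x_n\to-\infty$.

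To conclude, take the case $x_n\to+\infty$.
\begin{itemize}
\item The exponential decay of $\phi_\mu$ gives $\|\phi_\mu(\cdot-x_n)\|_{H^1(\mathbf{R}_-)}\to0$, hence $\|\xi_n(t_n)\|_{H^1(\mathbf{R}_-)}\to0$.
\item On $\mathbf{R}_+$, $\|\xi_n(t_n)-e^{i\theta_n}\phi_\mu(\cdot-x_n)\|_{H^1(\mathbf{R}_+)}\to0$, with $x_n>M$ for large $n$.
\end{itemize}
So \eqref{eq: weaker_1} holds for large $n$, a contradiction. The case $x_n\to-\infty$ gives \eqref{eq: weaker_2} symmetrically. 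Thus no bubble-tower-type argument with the functional $F$ is needed here: non-attainment of the infimum enters only through the rigidity $\int V\phi_\mu(\cdot-\bar x)^2>0$.
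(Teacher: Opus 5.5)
Your proof is correct and follows the paper's argument step by step. The steps are: the minimizing property of the initial data, conservation of mass and energy, reduction to a minimizing sequence for the free problem via $E_V\ge E_0$ (with the mass shift $\mu_n\to\mu$ handled by continuity of $\nu\mapsto E_0(\phi_\nu)$), Lions' concentration--compactness, exclusion of bounded centers, and the two escape directions contradicting the two alternatives. The only cosmetic difference is how you exclude bounded $x_n$: you contrast $\int V|\xi_n(t_n)|^2\to0$ with $\int V\phi_\mu(\cdot-\bar x)^2>0$, whereas the paper writes this as $E_V(\phi_\mu(\cdot-y_*))=E_0(\phi_\mu)$ contradicting non-attainment of the infimum, which is the same fact.
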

\begin{proof} We proceed by contradiction. Suppose there exist $\varepsilon>0$ and $M>0$, a sequence of initial data $\{u_{0,n}\} \subset H^1(\mathbf{R})$ and a sequence of positive numbers $c_n \to +\infty$ as $n \to \infty$ satisfying
\begin{equation}
    ||u_{0,n}||_{H^1(\mathbf{R}_-)} + ||u_{0,n} - \phi_{\mu}(\cdot-c_n)||_{H^1(\mathbf{R}_+)} \to 0, \qquad \text{as} \quad n \to \infty,
\label{eq: line_u0_contr}
\end{equation}
such that, if for any $n \in \mathbf{N}$ the map $\xi_n(t) \in C^0(\mathbf{R}, H^1(\mathbf{R}))$ denotes the unique solution to \eqref{eq: NLS_potential} with initial datum $u_{0,n}$, then there exists a sequence of times $\{t_n\} \subset \mathbf{R}_+$ such that 
\begin{equation}
    ||\xi_n(t_n)||_{H^1(\mathbf{R}_-)} + \inf_{\theta \in \mathbf{R}, \  \ c > M}||\xi_n(t_n) - e^{i\theta}\phi_{\mu}(\cdot-c)||_{H^1(\mathbf{R}_+)} \geq \varepsilon
    \label{eq: contradiction_1}
\end{equation} and
\begin{equation}
    ||\xi_n(t_n)||_{H^1(\mathbf{R}_+)} + \inf_{\theta \in \mathbf{R}, \  \ c < - M}||\xi_n(t_n) - e^{i\theta}\phi_{\mu}(\cdot-c)||_{H^1(\mathbf{R}_-)} \geq  \varepsilon
    \label{eq: contradiction_2}
\end{equation}
First, we would like to show that $\{u_{0,n}\}$ is a minimizing sequence, i.e., according to \eqref{eq: infimum_pot}, that $E_V(u_{0,n}) \to E_0(\phi_{\mu})$ and $M(u_{0,n}) \to \mu$ as $n \to \infty$. By \eqref{eq: line_u0_contr} we have, for $l>0$, 
\begin{equation} ||u_{0,n}||_{L^2(-\infty,\ l)}^2 \to 0 \qquad \text{and} \qquad ||u_{0,n}||_{L^2(l,+\infty)}^2 \to \mu \qquad \text{as} \quad n \to \infty.
\label{eq: L2_conv}
\end{equation}
We conclude $M(u_{0,n}) \to \mu$ as $n \to \infty$. Using the Gagliardo-Nirenberg inequality (see \cite{adami}) on $\mathbf{R}_-$  and \eqref{eq: line_u0_contr}, we have
\begin{equation}
        ||u_{0,n}||^p_{L^p(\mathbf{R}_-)} \leq C ||u'_{0,n}||^{\frac{p}{2}-1}_{L^2(\mathbf{R}_-)}||u_{0,n}||^{\frac{p}{2}+1}_{L^2(\mathbf{R}_-)} \to 0 \qquad \text{as} \quad n \to \infty.
\end{equation}
Similarly, we conclude that
\begin{equation}
||u_{0,n}||^p_{L^p(\mathbf{R})}  = ||u_{0,n}||^p_{L^p(\mathbf{R}_-)} + ||u_{0,n}||^p_{L^p(\mathbf{R}_+)}  \to ||\phi_{\mu}||^p_{L^p(\mathbf{R})} \qquad \text{as} \quad n \to \infty.
\end{equation}
Then, again by \eqref{eq: line_u0_contr}, we have $||u_{0,n}'||_{L^2(\mathbf{R})} \to ||\phi_{\mu}'||_{L^2(\mathbf{R})}$ as $n \to \infty$. Finally, we have for any $l >0$ and for any $n$ large enough
\begin{equation}
    \begin{split}
        \int_{\mathbf{R}}V(x)|u_{0,n}(x)|^2dx &= \int_{(-\infty,\ l)}V(x)|u_{0,n}(x)|^2dx + \int_{(l,+\infty)}V(x)|u_{0,n}(x)|^2dx \\ & \leq ||V||_{L^{\infty}(\mathbf{R})}||u_{0,n}||^2_{L^2(-\infty, \ l)} + ||V||_{L^{\infty}(l, +\infty)}2\mu.
    \end{split}
    \label{eq: V_comp}
\end{equation}
By taking $l >0$ big enough we can make the second contribution in \eqref{eq: V_comp} arbitrarily small, by the decay of $V$ at infinity. Then by taking $n$ big enough we can make also the first contribution arbitrarily small, by \eqref{eq: L2_conv}. A similar computation holds for the case $V(x) = g\delta(x)$, with $g>0$. Indeed, by the Gagliardo-Nirenberg inequality, it holds that, for any $l>0$,
\begin{equation}
    ||u_{0,n}||_{L^{\infty}(-l,l)}^2 \leq C ||u_{0,n}||_{H^1(-l,l)}||u_{0,n}||_{L^2(-l,l)} \to 0, \qquad \text{as} \quad n \to \infty.
    \end{equation}
We conclude that $E_V(u_{0,n}) \to E_0(\phi_{\mu})$ and $M(u_{0,n}) \to M(\phi_{\mu})$ as $n \to \infty$. Thus, $\{u_{0,n}\}$ is a minimizing sequence for the problem \eqref{eq: infimum_pot}.\\
By conservation of energy and mass, we have
\begin{equation}
    E_V(\xi_n(t_n)) \to E_0(\phi_{\mu}) \qquad \text{and} \qquad M(\xi_n(t_n)) \to M(\phi_{\mu}), \qquad \text{as} \quad n \to \infty.
    \label{eq: evolute_minimize}
\end{equation}
We denote $\mu_n: = M(\xi_n(t_n))$ for all $n \in \mathbf{N}$. Then, we have  $E_V(\xi_n(t_n)) \geq E_0(\xi_n(t_n)) \geq E_0(\phi_{\mu_n})$ for any $n \in \mathbf{N}$, where the second inequality follows from the minimality property of $\phi_{\mu}$, for $\mu >0$. With \eqref{eq: evolute_minimize} and by the continuity of the map $\mu \to E_0(\phi_{\mu})$ (see Equation (10) in \cite{adami}), we have $E_0(\xi_n(t_n)) \to E_0(\phi_{\mu})$ as $n \to \infty$. In other words, since it holds that
\begin{equation}
    \inf_{u \in H^1_{\mu}(\mathbf{R})}E_0(u) = E_0(\phi_{\mu}),
    \label{eq: min_line}
\end{equation}
we have that $\{\xi_n(t_n)\} \subset H^1(\mathbf{R})$ is a minimizing sequence for the problem in \eqref{eq: min_line}. The same  conclusion can be obtained in the case of a delta potential $V(x) = g \delta(x)$, with $g >0$. By the concentration--compactness principle, there exist two sequences $\{\theta_n\} \subset \mathbf{R}$ and $\{y_n\} \subset \mathbf{R}$ such that
\begin{equation}
    ||\xi_n(t_n) - e^{i\theta_n}\phi_{\mu}(\cdot-y_n)||_{H^1(\mathbf{R})} \to 0 \qquad \text{as} \quad n \to \infty.
    \label{eq: xi_conv}
\end{equation}
Our next goal is to show that $\{y_n\}$ cannot be a bounded sequence. Assume by contradiction that it is bounded. Then, up to extracting a subsequence, there exist $y_* \in \mathbf{R}$ and $\theta_* \in \mathbf{R}$ such that $y_n \to y_*$ and $\theta_n \to \theta_*$ as $n \to \infty$ (modulo $2\pi$). This implies that
\begin{equation*}
||\xi_n(t_n) - e^{i\theta_*}\phi_{\mu}(\cdot-y_*)||_{H^1(\mathbf{R})} \leq ||\xi_n(t_n) - e^{i\theta_n}\phi_{\mu}(\cdot-y_n)||_{H^1(\mathbf{R})}  + ||e^{i\theta_n}\phi_{\mu}(\cdot-y_n) - e^{i\theta_*}\phi_{\mu}(\cdot-y_*)||_{H^1(\mathbf{R})}  
\end{equation*}
$\to 0$ as $ n \to \infty$. The convergence in $H^1(\mathbf{R})$ implies that $E_V(\xi_n(t_n)) \to E_V(\phi_{\mu}(\cdot-y_*))$ as $n \to \infty$. By \eqref{eq: evolute_minimize} this implies that $E_V(\phi_{\mu}(\cdot-y_*)) = E_0(\phi_{\mu})$, which is a contradiction. We conclude that $\{y_n\}$ is unbounded. The same can be concluded in the case of a short-range potential. Suppose now $y_n \to +\infty$ as $n \to \infty$. Then, we have by \eqref{eq: xi_conv} that $||\xi_n(t_n)||_{H^1(\mathbf{R}_-)} \to 0$ and $||\xi_n(t_n)-e^{i\theta}\phi_{\mu}(\cdot-y_n)||_{H^1(\mathbf{R}_+)} \to 0$ as $n \to \infty$. This contradicts \eqref{eq: contradiction_1}. Similarly, if we suppose $y_n \to -\infty$ we find a contradiction with \eqref{eq: contradiction_2}.\\
\end{proof}
Let us now prove {\bf Proposition} \ref{Line}.
\begin{proof} We make use of Lemma \ref{Weaker statement} and then argue as in the proof of Proposition \ref{main_prop_1} for the case of metric graphs. In particular, for a given $\mu >0$, set $C := ||\phi_{\mu}||_{H^1(\mathbf{R}_+)}$. Choose $\varepsilon \in (0,C/4)$ and $M>0$. Consider $\delta >0$ and $c_{\delta}>0$ as in Lemma \ref{Weaker statement}. Choose in particular $\delta < C/4$. Let $u_0 \in H^1(\mathbf{R})$ satisfy \eqref{eq: condition_u0} and let $\xi(t)$ be the corresponding solution. By Lemma \ref{Weaker statement}, for each time $t \geq 0$ we have: (a) inequality \eqref{eq: weaker_1} holds, or (b) inequality \eqref{eq: weaker_2} holds. Our goal is to show that (a) holds for every $t \geq 0$.\\
At each $t\geq 0$ only one between (a) and (b) can hold. Indeed, if (a) holds, then $||\xi(t)||_{H^1(\mathbf{R}_-)} < \varepsilon$. While, if (b) holds, there exists $c_*>M$ such that 
\begin{equation}
    ||\xi(t)||_{H^1(\mathbf{R}_-)} \geq ||\phi_{\mu}(\cdot-c_*)||_{H^1(\mathbf{R}_-)} - 2\varepsilon \geq C - C/2 > \varepsilon.
\end{equation}
At $t =0$, we have that (a) holds. Then we define $T_+:= \sup\{t \geq 0, \ \text{s.t. \ (a) \ holds}\}$. Then, by continuity at $t=0$ of the map $t \to \xi(t)$ from $\mathbf{R}$ to $H^1(\mathbf{R})$, we have $T_+>0$. Indeed, if by contradiction $T_+=0$, there exists a sequence of times $\{t_n\} \subset \mathbf{R}_+$ with $t_n \to 0^+$ as $n \to \infty$, such that (b) holds at each $t = t_n$. This implies that $||\xi(0)||_{H^1(\mathbf{R}_-)} < \delta$, while, for a certain $L \geq c_{\delta}$,
\begin{equation}
    ||\xi(t_n)||_{H^1(\mathbf{R}_-)} \geq ||\phi_{\mu}(\cdot-L)||_{H^1(\mathbf{R}_-)} - \delta \geq C - C/4 > \varepsilon,
\end{equation}
which is a contradiction. With an analogous reasoning, we can show that $T_+ = +\infty$.
\end{proof}
Analogously to the case of metric graphs, reflection of slow solitons still holds. We state the result, avoiding the repetition of the proof, which in this case does not present any relevant difference with the one of Proposition \ref{prop:slow-solitons}. We preliminarily define the asymptotic approximated soliton:
\begin{equation}
    (\psi_{(x_0,v)})(x) = \begin{cases}
        e^{-i\frac{v}{2}x}\chi(x)\phi_{\mu}(x-x_0) \qquad\text{if} \quad x\in \mathbb{R}_+\\
        0 \qquad \qquad \qquad \qquad \ \ \qquad \text{if} \quad x\in \mathbb{R}_-   
        \end{cases}
	\label{slow_soliton2}
	\end{equation}
where $x_0 \in \mathbf{R_+}$ and $v \in \mathbf{R}$. Then, $\chi \in C^{\infty}(\mathbf{R}_+)$ is a smooth cut-off such that $\chi(x) =0$ on $(0,1)$ and $\chi(x) = 1$ on $(2,+\infty)$.	
	
 \begin{proposition}
Let $\mu>0$. Let $\psi_{(x_0,v)} \in H^1(\mathbb{R})$ be an initial datum as in \eqref{slow_soliton2}, for $x_0 \in \mathbf{R}_+$ and $v \in \mathbf{R}$. Let $\xi(t)$ the unique solution to \eqref{eq: NLS_potential} with $\xi(0) = \psi_{(x_0,v)}$. For any $\varepsilon >0$ and $M>0$, there exists a minimal distance $D>0$ and a critical velocity $v_{cr}>0$ such that the following holds true: if the parameters $(x_0,v)$ satisfy $x_0 > D$ and $|v|<v_{cr}$, then the solution $\xi(t)$ satisfies
\begin{equation}
	||\xi(t)||_{H^1(\mathbf{R}_-)} + \inf_{\theta \in \mathbf{R}, \  c>M}||\xi(t) - e^{i\theta}\phi_{\mu}(\cdot-c)||_{H^1(\mathbf{R_+})} < \varepsilon, \qquad \forall t \geq 0. 
\end{equation}
\label{proposition_slow_solitons_line}
\end{proposition}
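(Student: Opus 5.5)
The plan is to reduce the statement to Proposition \ref{Line}, mirroring the proof of Proposition \ref{prop:slow-solitons}. Fix $\varepsilon>0$ and $M>0$, and let $\delta>0$ and $c_\delta>0$ be the constants given by Proposition \ref{Line} for the potential $V$ (regular or $g\delta$). The task is then to show that, after a constant gauge rotation, the datum $\psi_{(x_0,v)}$ satisfies \eqref{eq: condition_u0} with $L=x_0\geq c_\delta$.

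First, set $\widetilde\psi:=e^{i\frac{v}{2}x_0}\psi_{(x_0,v)}$ and $\widetilde\xi(t):=e^{i\frac{v}{2}x_0}\xi(t)$. Equation \eqref{eq: NLS_potential} is invariant under multiplication by a constant phase, since the potential term is linear and the nonlinearity is gauge invariant. Hence $\widetilde\xi$ is the solution with datum $\widetilde\psi$. The quantity in the conclusion is also unchanged by this rotation, because the phase is absorbed into the infimum over $\theta$ and does not affect $\|\cdot\|_{H^1(\mathbf{R}_-)}$.

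Next, $\widetilde\psi$ vanishes on $\mathbf{R}_-$ by \eqref{slow_soliton2}, so $\|\widetilde\psi\|_{H^1(\mathbf{R}_-)}=0$. On $\mathbf{R}_+$ we have $\widetilde\psi(x)=e^{-i\frac{v}{2}(x-x_0)}\chi(x)\phi_\mu(x-x_0)$. Split $\widetilde\psi-\phi_\mu(\cdot-x_0)$ into three terms, exactly as in \eqref{reflection_comp_blue}:
\begin{itemize}
\item $(\chi-1)\phi_\mu(\cdot-x_0)$,
\item $(e^{-i\frac v2(\cdot-x_0)}-1)\phi_\mu(\cdot-x_0)$,
\item their cross term.
\end{itemize}
The first and third terms are supported in $[0,2]$. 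For $|v|\le1$ their $H^1$ norms are bounded by a constant times $\|\phi_\mu(\cdot-x_0)\|_{W^{1,\infty}(0,2)}$. This tends to $0$ as $x_0\to\infty$ by the exponential decay of $\phi_\mu$ and $\phi_\mu'$. So we can choose $D\ge c_\delta$ making these two terms smaller than $\delta/2$ for $x_0>D$.

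For the middle term, translate by $y=x-x_0$ and use $|e^{-i\frac v2 y}-1|\le\frac{|v|}{2}|y|$. Together with the explicit derivative and the exponential decay of $\phi_\mu$, this gives a bound $C_\mu|v|$. Choosing $v_{cr}=\min\{1,\delta/(2C_\mu)\}$ makes this term smaller than $\delta/2$ for $|v|<v_{cr}$.

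Finally, for $x_0>D$ and $|v|<v_{cr}$ the datum $\widetilde\psi$ satisfies \eqref{eq: condition_u0} with $L=x_0\ge c_\delta$. Proposition \ref{Line} then yields the claimed bound for $\widetilde\xi$, and therefore for $\xi$.

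There is no real obstacle here. The only point needing care is that the cutoff $\chi$ ensures $\psi_{(x_0,v)}\in H^1(\mathbf{R})$ (continuity at $0$). This matters for the $\delta$-potential, whose energy involves the value $u(0)$. Beyond that, the constants $D$ and $v_{cr}$ must depend only on $\varepsilon$, $M$ and $\mu$, which holds since both are fixed through $\delta$ and $c_\delta$.
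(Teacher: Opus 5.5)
Your proposal is correct and takes the same route as the paper. The paper omits this proof and refers to the proof of Proposition \ref{prop:slow-solitons}, which consists of exactly your gauge rotation and three-term splitting, with Proposition \ref{Line} used in place of Proposition \ref{main_prop}.
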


\begin{remark}
The first analysis of cubic ($p=4$) NLS equation with a delta potential was given in \cite{GHW2004}, where an effective finite dimensional Hamiltonian system describing the dynamics of the defect-mode interaction has been introduced and numerical solutions corresponding to reflection, transmission and capture of the soliton by the defect were studied. In \cite{HMZ07} the reflection and transmission of fast solitons across a repulsive delta interaction has been initiated, exploiting the integrable character of the cubic nonlinearity, then extended to the attractive delta potential (admitting a bound state) in \cite{DatchevHolmer09}. The same problems on star graphs were studied in \cite{ACFN11}. The transmission of fast solitons across smooth potentials, possibly admitting a single bound state, is treated in the recent paper \cite{HoganMurphy24}, still for the cubic nonlinearity. The results in these papers give a control of the solution along times of the order of a certain inverse power of the velocity $v$ of the asymptotic approximate soliton. In \cite{Perelman2009} the quintic nonlinearity is considered, and the control is extended over any time.
In higher dimension we mention two relevant papers. Firstly \cite{gustafson}, where both local and nonlocal (Hartree) nonlinearities are treated with rigorous deduction of a reduced nonlinear system for the soliton parameters, still holding on finite time intervals. And finally the more recent (and more close to our main focus) \cite{NaumkinRaphael20} where the dynamics of a traveling NLS soliton escaping from a potential well in dimension $d\geq 2$ is studied, obtaining reduced equations at the leading order for the soliton center and showing a dependence of the motion of the untrapped soliton from the size of the potential tail.  

\end{remark}
\begin{remark}
As it is well known (see e.g. \cite{CGV2014} and references therein) solutions of the NLS on the line for nonlinearities $4<p<6$ that are small in suitable weighted Sobolev norms disperse and undergo scattering to free solutions. However, the states close to approximated solitons considered in our results cannot be arbitrarily small (due to the weighted norm), and there is no conflict between the two behaviors, belonging to different regions in the phase space. The same remark applies to the metric graphs case (see \cite{Aoki21}, \cite{Aoki22} for first results and examples about scattering in this setting).
\end{remark}
\subsection{Generalizations and possible extensions: non trivial metric graphs}
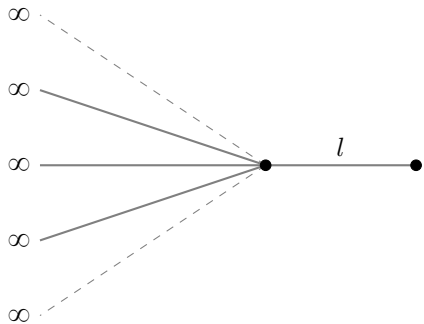
\begin{figure}[b]
\centering
\begin{tikzpicture}
\draw [gray, thick](0,0) to (-3,1);
\draw [gray, thick](0,0) to (-3,0);
\draw [gray, thick](0,0) to (-3,-1);
\draw [gray, dashed](0,0) to (-3,-2);
\draw [gray, dashed](0,0) to (-3,2);
\draw [gray, thick](0,0) to (2,0);
\draw plot [mark=*] coordinates {(0,0)};
\draw plot [mark=*] coordinates {(2,0)};
\fill (-3,1) node[left] {$\infty$};
\fill (-3,2) node[left] {$\infty$};
\fill (-3,0) node[left] {$\infty$};
\fill (-3,-1) node[left] {$\infty$};
\fill (-3,-2) node[left] {$\infty$};
\fill (1,0) node[above] {$l$};
\end{tikzpicture}
\caption{N half-lines and a pendant all emanating from the same vertex. The number of half-lines is $N \geq 3$ and the length of the pendant is $l >0$.} \label{fig:pendant}
\end{figure}
We now come back to the case of non trivial metric graphs, discussing the generality of the results here presented. First, we note that Proposition~\ref{main_prop} holds, in fact, for a wider class of graphs than those characterized by Assumption~H. Indeed, as noticed in Remark \ref{esshyps}), the proof of Proposition~\ref{main_prop} it is sufficient to assume that the following two conditions hold: 1) the connected graph $\mathcal{G}$ has at least two half-lines;  2) the infimum $\mathcal{E}_{\mathcal{G}}(\mu)$ of the energy at mass $\mu$ is not below the energy $E(\phi_{\mu},\mathbf{R})$ of the soliton on the line (see also Remark \ref{esshyps}). Of course one could consider metric graphs with non-negative delta vertices or non-negative potentials, as we did with the line, obtaining not surprising generalizations of those results, under suitable hypotheses. More interestingly, we show that there exist examples of metric graphs with Kirchhoff boundary conditions and no external potentials that satisfy conditions 1) and 2), but violate Assumption H. As we will see, when 1) and 2) hold, ground states may or may not exist.
This indicates that, while Assumption~H remains a convenient framework, the confinement or reflection behavior is not tied to topology and it is better understood as a threshold-energy mechanism.

Consider a graph $\mathcal{G}$ consisting of $N$ half-lines, with $N \ge 3$, and a pendant edge of length $l>0$, all attached to a single vertex (see Fig.~\ref{fig:pendant}). Because of the pendant edge, Assumption~H is violated. By \cite[Theorem 4.4]{adami} there exists a critical mass $\mu^*>0$ such that $\mathcal{G}$ admits ground states at mass $\mu$ if and only if $\mu\ge \mu^*$. Therefore, for any $\mu<\mu^*$ these graphs have no ground states. Thus, $\mathcal{E}_{\mathcal{G}}(\mu) = E(\phi_{\mu},\mathbf{R})$ for all $\mu <\mu^*$ and Proposition~\ref{main_prop} applies.

Next, if $\mu\ge \mu^*$, then ground states exist. Since the map $\mu\mapsto \mathcal{E}_{\mathcal{G}}(\mu)$ is continuous (see \cite{adami}, Theorem~3.1), we have
\[
\mathcal{E}_{\mathcal{G}}(\mu^*)
= \lim_{\mu\to (\mu^*)^-}\mathcal{E}_{\mathcal{G}}(\mu)
= \lim_{\mu\to (\mu^*)^-}E(\phi_{\mu},\mathbf{R})
= E(\phi_{\mu^*},\mathbf{R}).
\]
We conclude that $\mathcal{G}$ admits ground states at mass $\mu^*$ with
$\mathcal{E}_{\mathcal{G}}(\mu^*)=E(\phi_{\mu^*},\mathbf{R})$.
This example shows concretely that the confinement argument extends beyond Assumption~H, even in the presence of ground states.

We conclude with some open problems. In Section~\ref{section_stability} we discussed how the Cazenave--Lions argument for orbital stability of ground states on non-compact graphs relies on hypothesis (HP), namely the relative compactness of all minimizing sequences (with prescribed asymptotic mass). We also showed that bubble-tower graphs violate (HP) for every mass $\mu>0$. In fact, any graph $\mathcal{G}$ for which $\mathcal{E}_{\mathcal{G}}(\mu)=E(\phi_{\mu},\mathbf{R})$ holds violates (HP) at mass $\mu$, as discussed in Remark~\ref{remark_bubble_tower}. Therefore, if any of these graphs admit ground states at that mass, their orbital stability cannot be proved by the Cazenave--Lions argument. Notice that graphs of this kind exist even outside the class H, as the example in the previous paragraph shows. For such graphs, it is natural to ask whether orbital stability of the set of ground states can be proved by the same strategy as in Proposition~\ref{main_prop_2}. We believe the answer is affirmative if the set of ground states at mass $\mu^*$ consists of finitely many orbits (see \cite{DST20} for a discussion of uniqueness of ground states on this type of graphs). In this respect, we mention \cite{GG17}, where the authors study stability of ground states for the NLS equation with general nonlinearities on the line. When the set of ground states contains multiple orbits, they prove orbital stability of each orbit by a method similar to that of Proposition~\ref{main_prop_2}. We expect that their approach can be adapted to non-compact graphs with multiple ground states, but we leave this conjecture for future work.

Finally, a natural extension and refinement of Proposition~\ref{main_prop} is to show that the parameters $c$ and $\theta$ are regular (e.g.\ continuous, or even differentiable) functions of time, in the spirit of deriving modulation equations (see, e.g., \cite{bona}, or some of the paper cited in Remark 6.2, such as \cite{gustafson},\cite{NaumkinRaphael20} and reference therein). If this is possible, several interesting questions arise. For instance, one may ask whether the approximate solitons in Proposition~\ref{main_prop} drift farther and farther away from the compact core as time evolves, i.e.\ whether $c(t)\to+\infty$ as $t\to+\infty$ at some rate, or whether can stay bounded or oscillate. In this direction, we recall the work of Cavalcante and Mu\~noz \cite{CavMun23} on asymptotic stability of KdV solitons on the half-line. We partially address these questions with the aid of numerical simulations in the next section and we plan to discuss more thoroughly the problem from a rigorous perspective in a subsequent work. 

In the same spirit, one may ask whether there exist non-compact graphs $\mathcal{G}\neq\mathbf{R}$ for which slow incoming solitons exhibit a genuinely different qualitative behavior, for instance capture, transmission or escape of moving solitons. In the integrable case $p=4$, related reflection phenomena on the half-line with several boundary conditions are discussed in \cite{biondini}, and for NLS in $\mathbf R^n$ see the already cited \cite{NaumkinRaphael20}.

\subsection{Numerical simulations}
Using the QGLAB package developed by R. Goodman, G. Conte and J. Marzuola \cite{goodman} we have performed some numerical simulations in order to illustrate and to test quantitatively the results in Section \ref{SlowS}. In particular, we simulate the dynamics of a slow soliton colliding against the vertex of a star graph with three half-lines.\\
In the simulation the star graph is approximated by three segments of length $L=50$, all emanating from a single vertex. We choose the power $p = 5$ for the nonlinearity in \eqref{NLS_eq}, and we check that for other nonlinearities similar results are obtained (in particular for the integrable case $p=4$). To build the initial datum, we consider the following moving soliton on the line  \cite{sulem}
\begin{equation}
    \phi(x) = e^{-ixv_0}\Bigl(\frac{p}{2}\Bigr)^{\frac{1}{p-2}} \frac{1}{\cosh^{\frac{2}{p-2}}(\frac{p-2}{2}x)},  \qquad \text{with} \quad p = 5.
    \label{num_soliton}
\end{equation}
The simulation is started with $u_0(x)=\phi(x-L/2)$ on one edge of the graph, and $u_0(x)=\phi(x+L/2)$ on the other edges. The incoming velocity is chosen to be $v_0=-0.08$; the total time of simulation is set to $t=300$. In Figure \ref{sub_graphs} we report some snapshots of the modulus of the solution $|u(t)|^2$ at different times. The collision happens approximately at $t^*=144$; after that the soliton is completely reflected. Figure \ref{collision} displays the evolution of the slow soliton on single half-lines. We observe how the vertex repels the soliton, allowing only a minimal overlap.  In Figure \ref{mom-energy} (A) we report the ratio $p(t)/p(0)$ over time, where
\begin{equation}
    p(t) = \Im \int_{\mathcal{G}} u'\overline{u} dx
\end{equation}
is the linear momentum on $\mathcal{G}$. We observe that, before and after the collision, the momentum is approximately conserved in modulus, with a transient time around $t = 144$. On the other hand, its sign flips once the collision has happened. In Figure \ref{mom-energy} (B) we report the ratio $K(t)/K(0)$ over time, where
\begin{equation}
    K(t) = \frac{1}{2}||u'||^2_{L^2(\mathcal{G})}
\end{equation} is the kinetic energy. This quantity also has a transient time around $t=144$, but the value before and after the collision is approximately conserved, as in an elastic collision. It interesting to notice that the kinetic energy increases as the collision happens, and it reaches its maximal value at the collision time. Thus, the reflection of the soliton takes place without a classical turning point. This behavior is reminiscent of a quantum reflection \cite{Cornish08, Brand06}, which consists in the non classical reflection of slow solitons by an attractive and rapidly varying potential. We remark that, in the present case, the reflection takes place in the absence of an external potential and it is due to the vertex interaction, but the qualitative behavior we observe is analogous..\\
 Finally we checked that the total energy (\ref{energy}) is conserved with accuracy $\bigl|\frac{E(t)-E(0)}{E(0)}\bigr| < 8 \cdot 10^{-3}$ and the total mass (\ref{mass}) is conserved with accuracy $\bigl|\frac{M(t)-M(0)}{M(0)}\bigr| < 1 \cdot 10^{-3}$, over the whole simulation.\\
The results of the numerical simulations seem to indicate that the compact part of the graph has a net repulsive effect on the slow soliton (we mention in this respect the analysis of instability of half-solitons on star graphs given in \cite{KP18}). In the following, we want to investigate the spatial range of this effect. In order to do this, we consider an approximate soliton located on one half-line of the star graph, at distance $x_0>0$ from the vertex, and with zero incoming velocity. Our goal is to study how the dynamics associated to $u_0$ changes as we increase $x_0>0$. \\
We start the simulation with the following initial datum: $u_0=\phi(x-x_0)$ on one edge of the star graph and $u_0(x) = \phi(x+x_0)$ on the other two edges, where $\phi$ is given in \eqref{num_soliton}, with $v=0$. The results of the simulations are reported in Figure \ref{sub_graphs_2}.  We observe a clear repulsive effect for the case $x_0=L/10$, which pushes the soliton away from the vertex. This effect is present also for $x_0=L/9$ and $x_0=L/8$, but with milder intensity. Thus, the repulsive effect seems to diminish as we move away from the vertex. In the case $x_0=L/2$, the repulsive effect appears to be so low that the solution remains in its initial position for the whole time interval of the simulation (this static behavior is observed even if we run the simulation until $t=600$). If the value of $x_0>0$ is increased further so as to approach the free end of the edge (where Dirichlet conditions are applied), boundary effects may start to appear. We believe that these effects can be observed in Figure \ref{sub_graphs_3}, where the evolution associated to $x_0 = 0.875L$ and $x_0=0.9L$ is reported. We observe that the presence of the Dirichlet boundary has a repulsive effect as well, which pushes the soliton towards the center of the edge, and whose intensity increases as we move closer to the free end of the edge. The presence of boundary effects is indicative of the fact that the infinite length approximation, which replaces the half-lines of the graph with finite length edges, is effective provided that the amplitude $|u(t)|^2$ of the solution always remains small in the extremal regions of the compact graph. Otherwise, finite-size effects may appear. In the numerical simulations reported so far, it seems reasonable to assume that, as long as the distance between the soliton and the vertex remains smaller than half of the lentgh of the edge, boundary effect are negligible. Finally, we report in Figure \ref{sub_graphs_4} the evolution associated with $x_0=0.875L$ as before, but this time with a nonzero incoming velocity $v=-0.1$. In this case we believe boundary effect are still present, but less apparent, probably due to the fact that the solution spends a relatively short time in the vicinity of the boundary. In particular, we observe that the soliton proceeds in uniform motion towards the vertex until the collision time, at which it is reflected, in agreement with the results in Section \ref{SlowS}.


\section*{Acknowledgments.}
\noindent The authors are grateful to Roy Goodman for various comments and discussions. The second author acknowledges the support of the Next Generation EU - Prin 2022 project "Singular Interactions and Effective Models in Mathematical Physics- 2022CHELC7" and of Gruppo Nazionale di Fisica Matematica (GNFM-INdAM).


\begin{figure}
\centering
\subfloat[][\emph{$t={0}$}]
{\includegraphics[width=.40\textwidth]{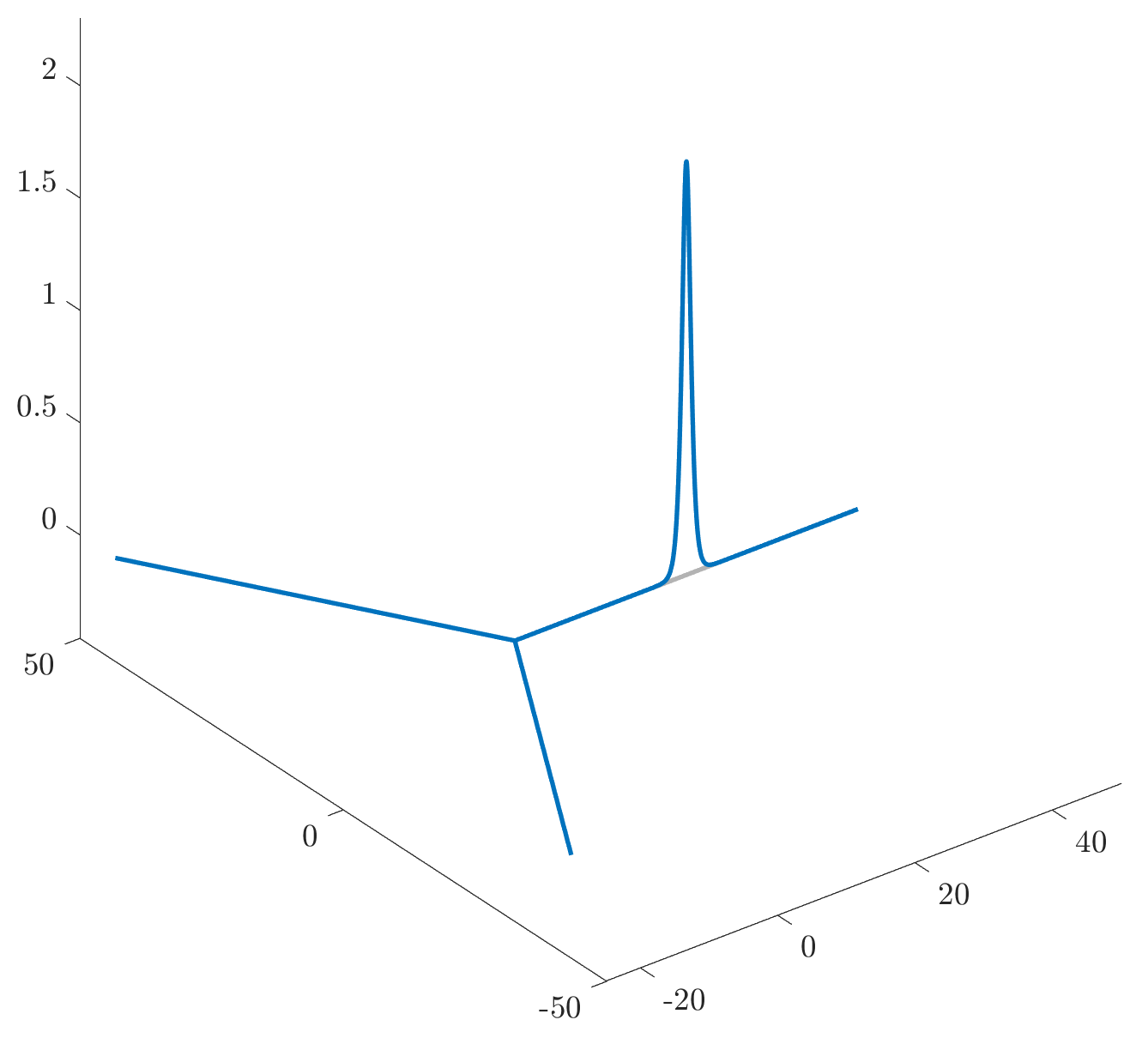}} \quad
\subfloat[][\emph{$t=100$}]
{\includegraphics[width=.40\textwidth]{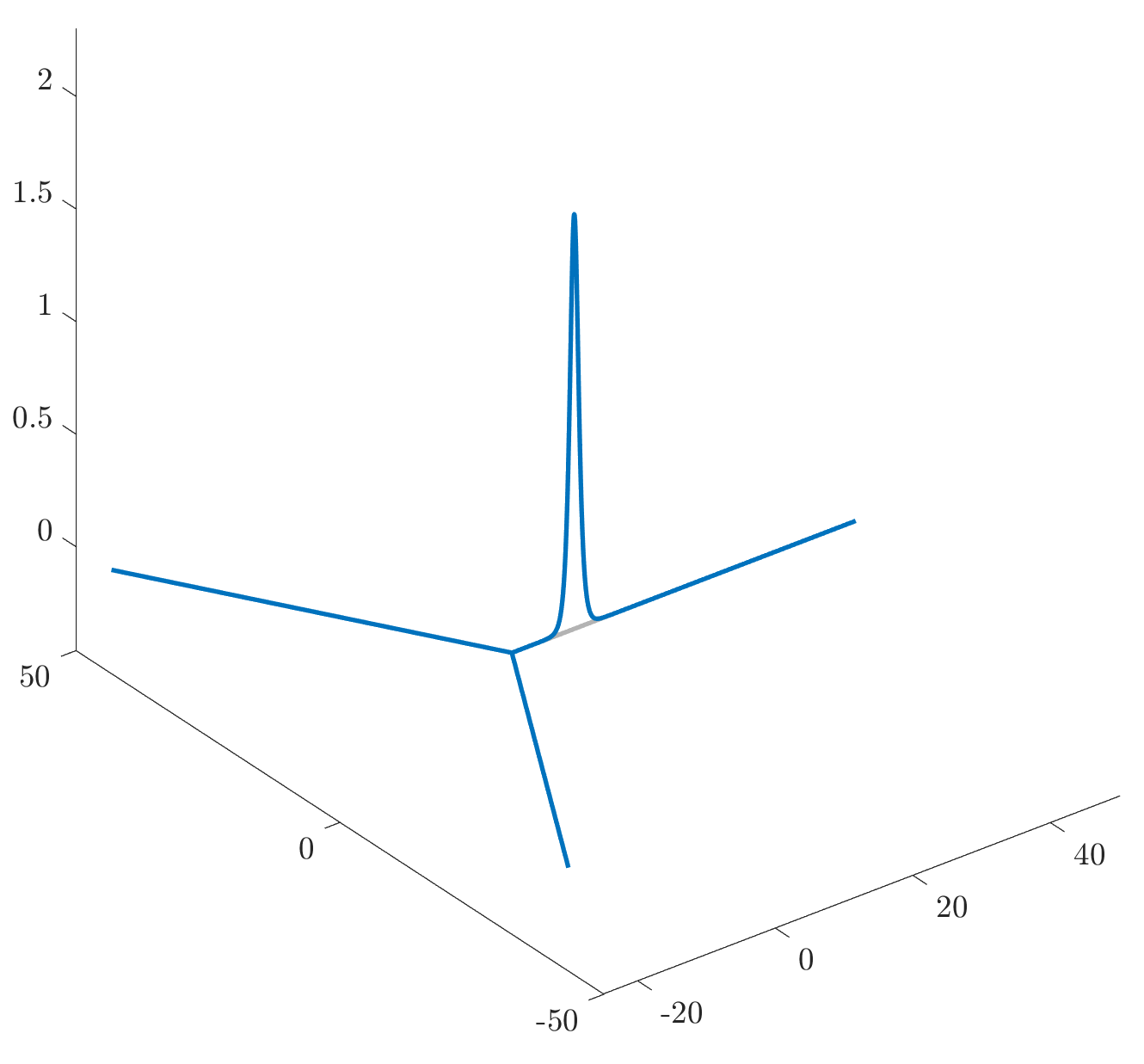}} \quad
\subfloat[][\emph{$t={144}$}]
{\includegraphics[width=.40\textwidth]{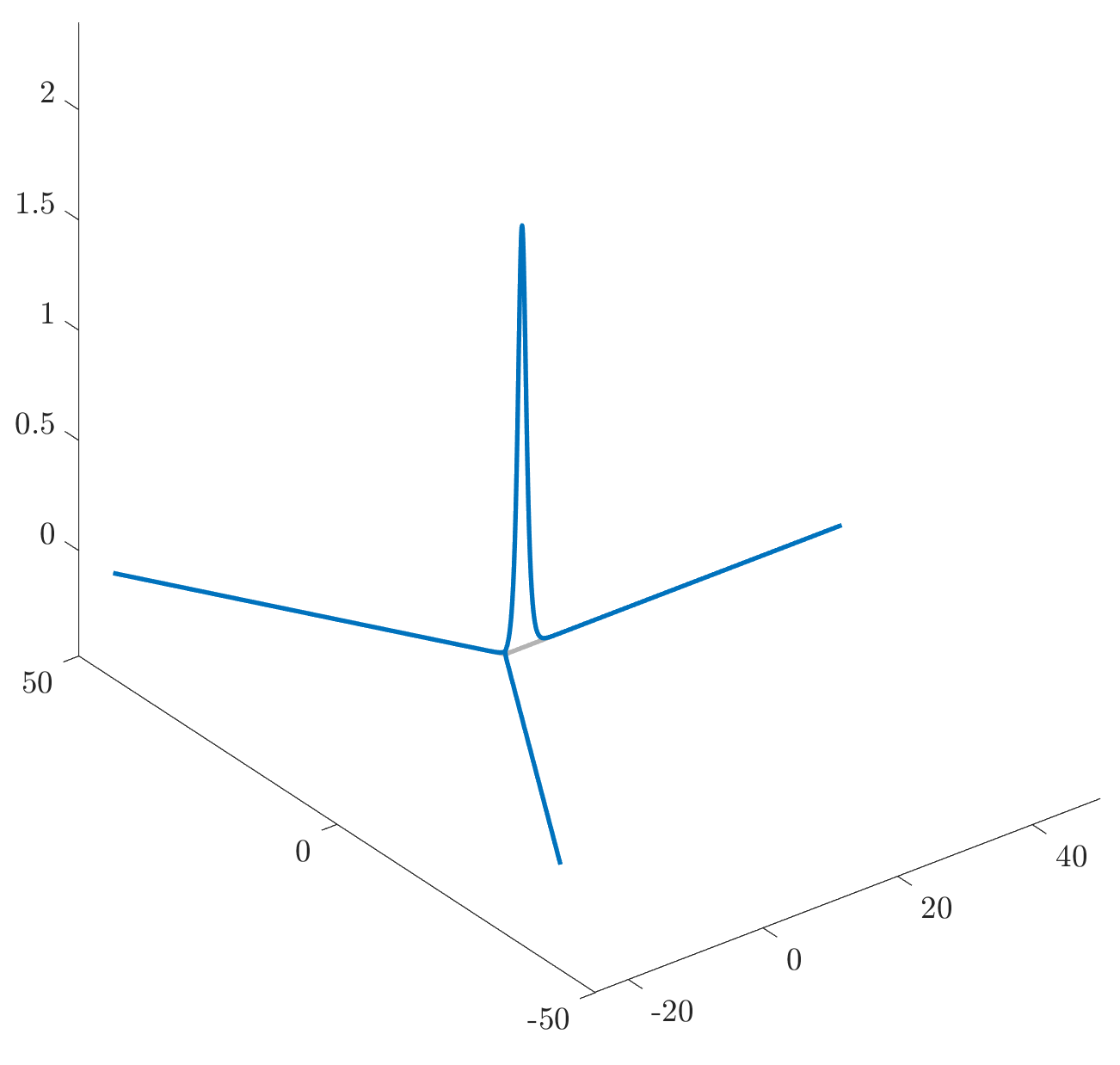}} \quad 
\subfloat[][\emph{$t={175}$}]
{\includegraphics[width=.40\textwidth]{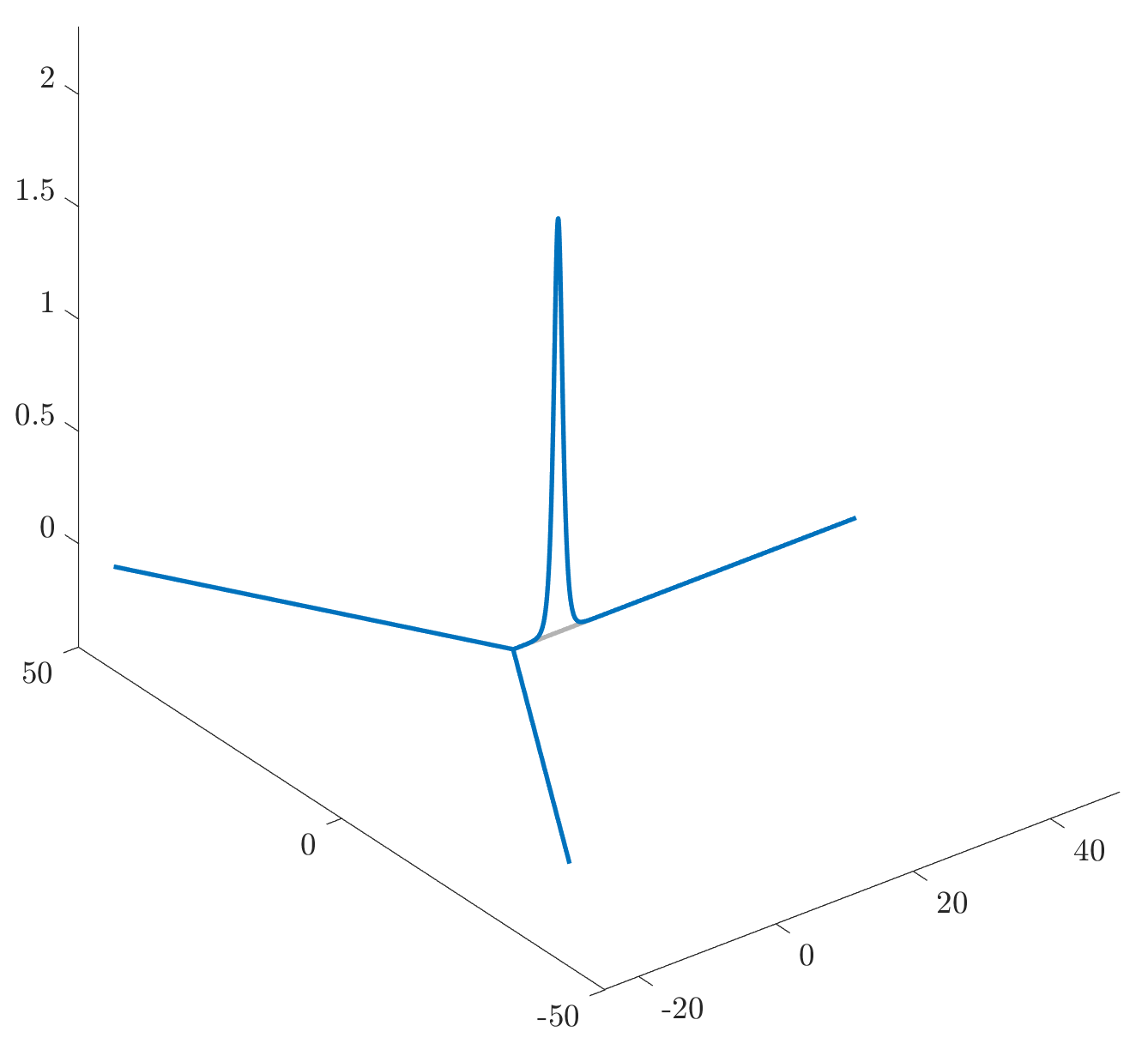}} \quad 
\subfloat[][\emph{$t={225}$}]
{\includegraphics[width=.40\textwidth]{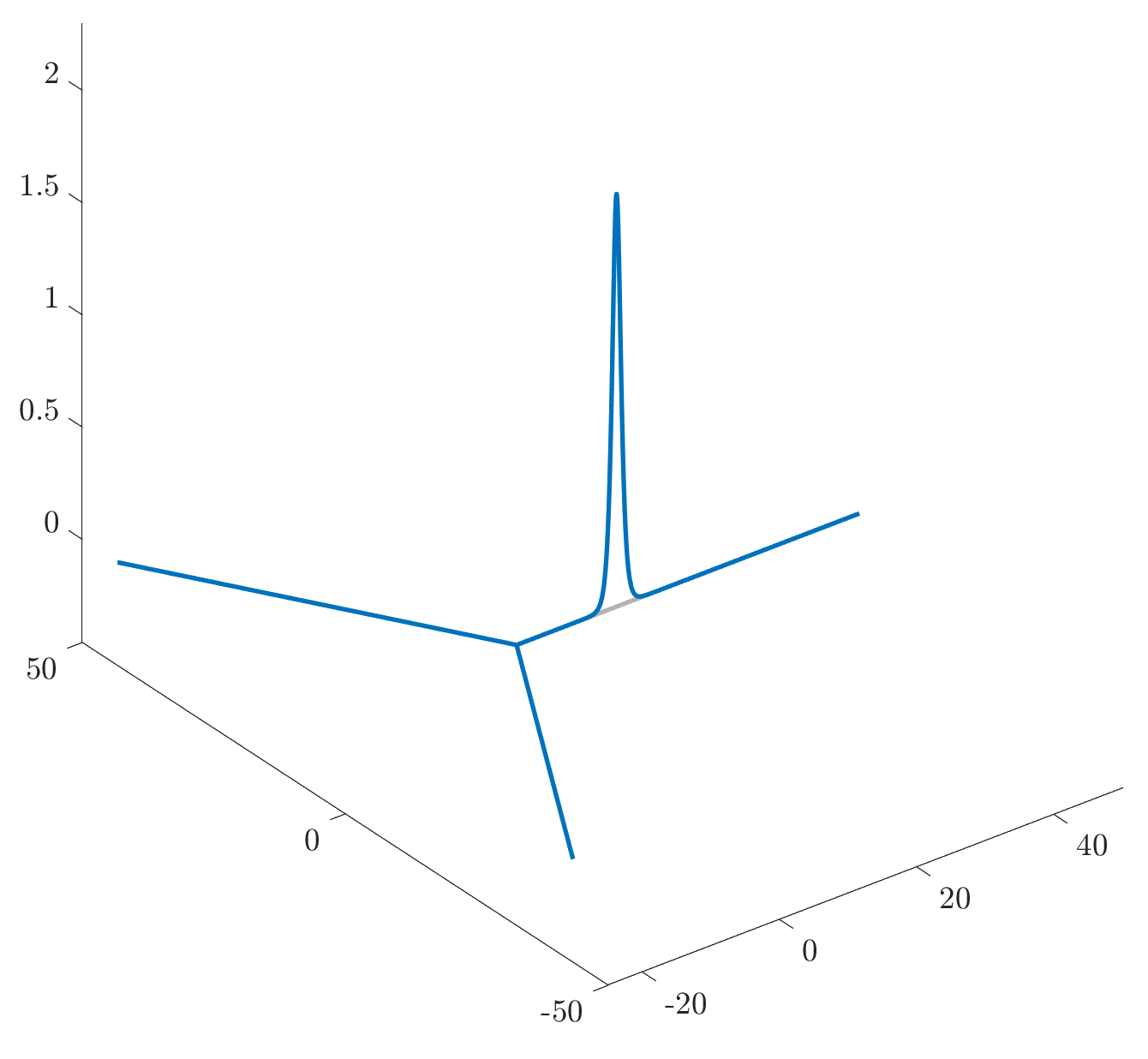}} \quad
\subfloat[][\emph{$t={300}$}]
{\includegraphics[width=.40\textwidth]{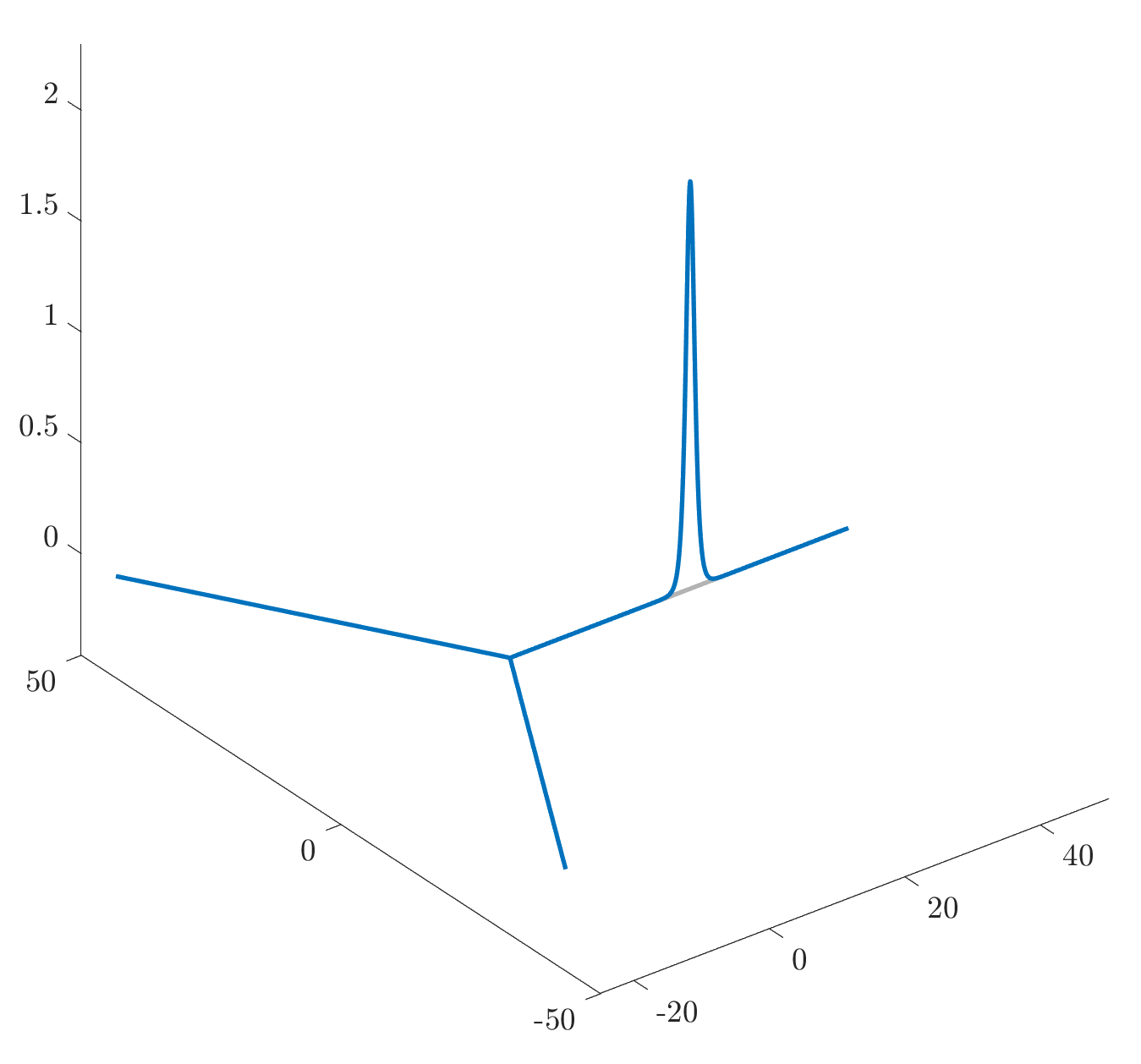}} \\
\caption{Snapshots of the amplitude $|u(t)|^2$ of the solution to the NLS equation \eqref{NLS_eq} with $p=5$ on the star graph at six different times.  The initial datum corresponds to an approximated soliton placed halfway along one edge of the graph, and with velocity $v=-0.08$ towards the vertex.  We observe the soliton approaching the vertex of the graph and hitting it at approximately $t=144$. It is then reflected completely. }
\label{sub_graphs}
\end{figure}
\begin{figure}
\centering
\subfloat[][]
{\includegraphics[width=\textwidth]{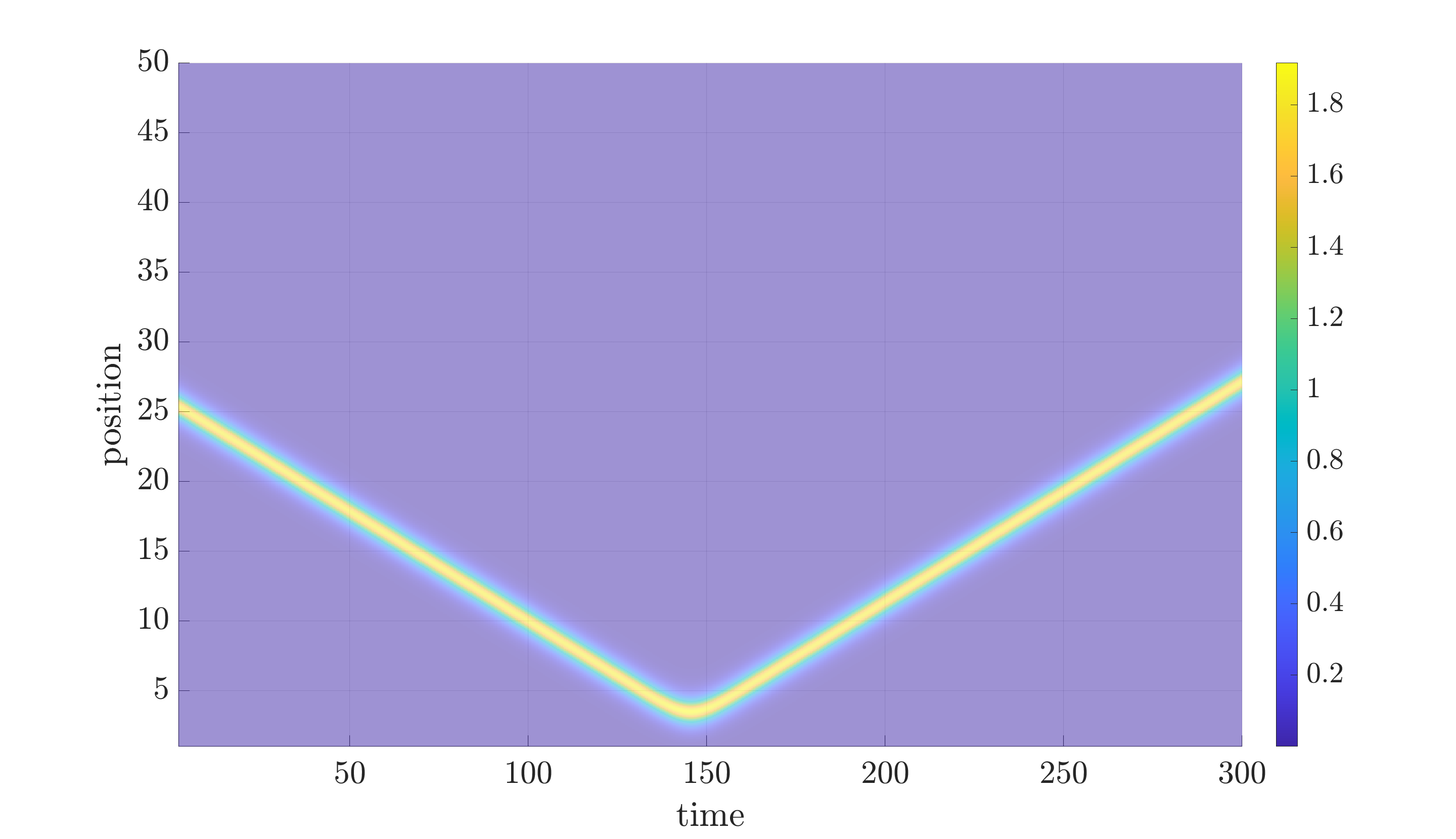}} \quad 
\subfloat[][]
{\includegraphics[width=\textwidth]{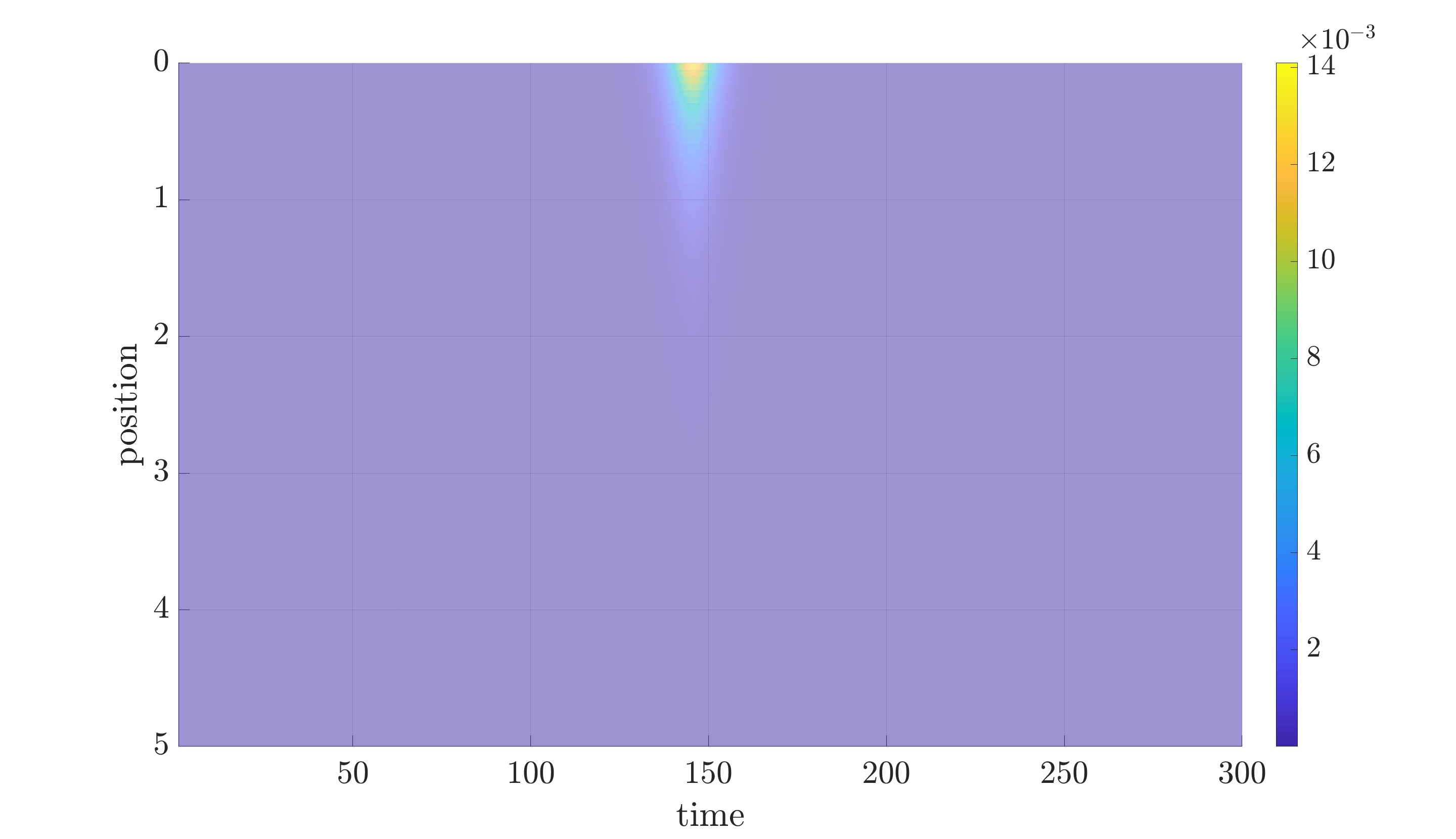}} \\
\caption{We plot the amplitude $|u(x,t)|^2$ as a function of space and time, for the same solution as in Figure \ref{sub_graphs}. In the plots, the position of the vertex of the star graph corresponds to the origin of the vertical axis. In Figure (A) we display the edge that initially contains the soliton. We observe how the soliton proceeds with uniform motion until the collision time, at which it is reflected. It then proceeds backwards, still with uniform motion. In Figure (B) we display one of the edges of the graph which are initially empty, i.e.  which initially contain only the tail of the soliton (for convenience, we display only one tenth of the total length of the edge). We observe how the soliton is unable to access this edge, and how its maximal amplitude is approximately $1.4\times10^{-2}$ at the collision time $t^* = 144$. A similar result holds for the other empty edge. }
\label{collision}
\end{figure} 
\begin{figure}
\centering
\subfloat[][]
{\includegraphics[width=\textwidth]{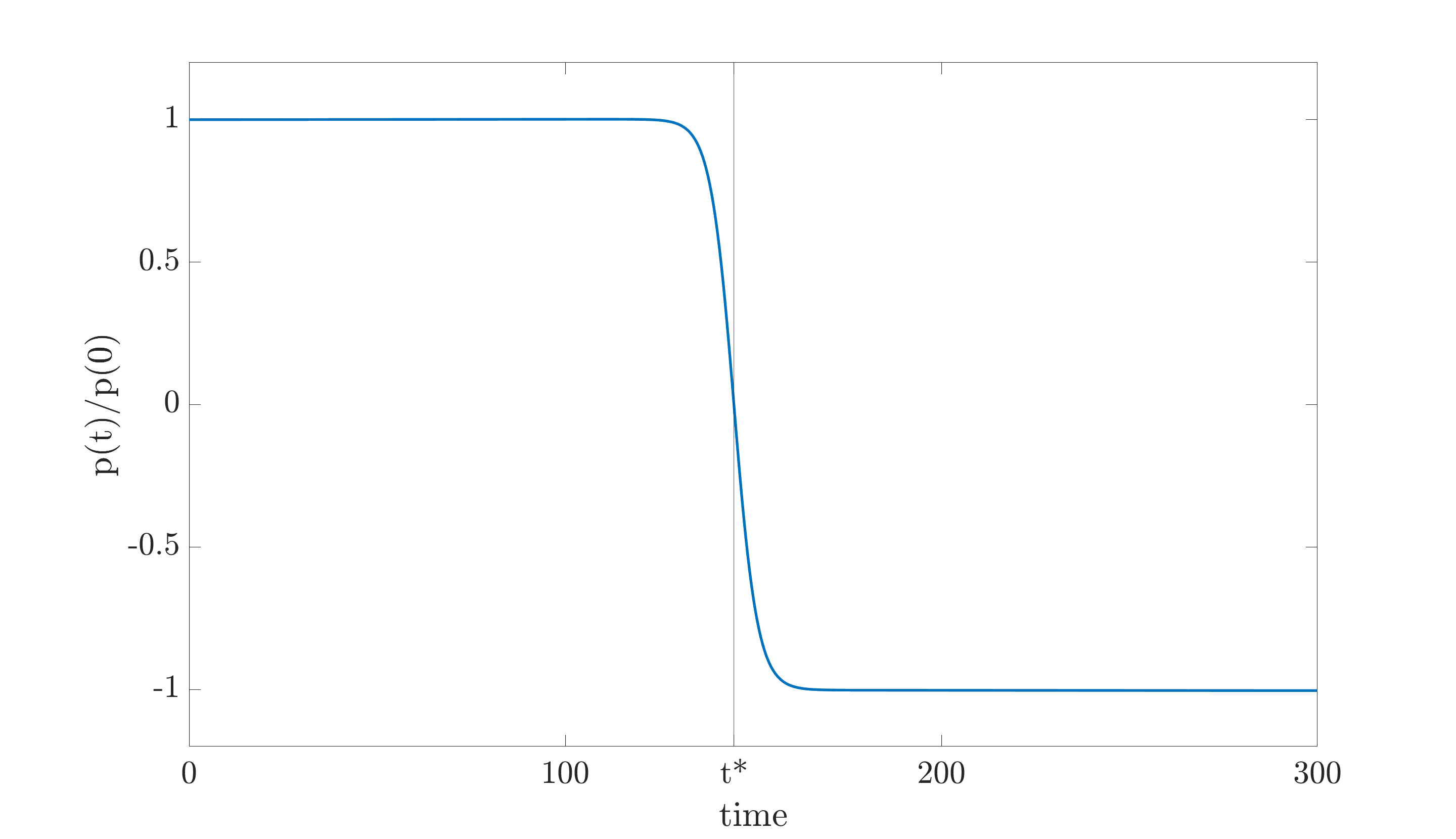}} \quad 
\subfloat[][]
{\includegraphics[width=\textwidth]{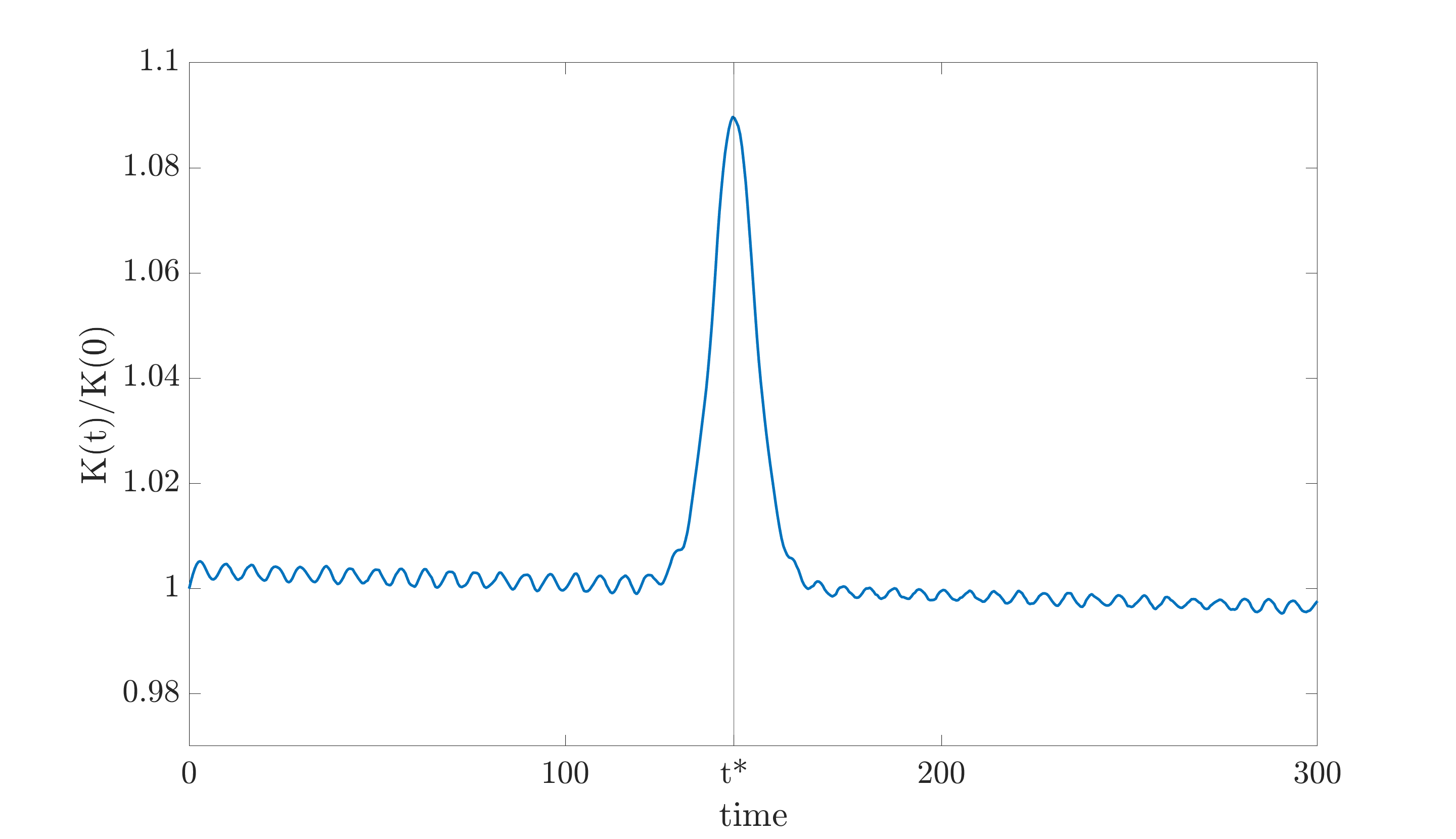}} \\
\caption{For the same solution as in Figure \ref{sub_graphs} we plot the momentum $p(t)$ divided by $p(0)$ in picture (A). In picture (B) we plot the kinetic energy $K(t)$ divided by $K(0)$. The collision time corresponds to $t^* = 144$.}
\label{mom-energy}
\end{figure} 
\begin{figure}
\centering
\subfloat[][\emph{$x_0=L/10$}]
{\includegraphics[width=.45\textwidth]{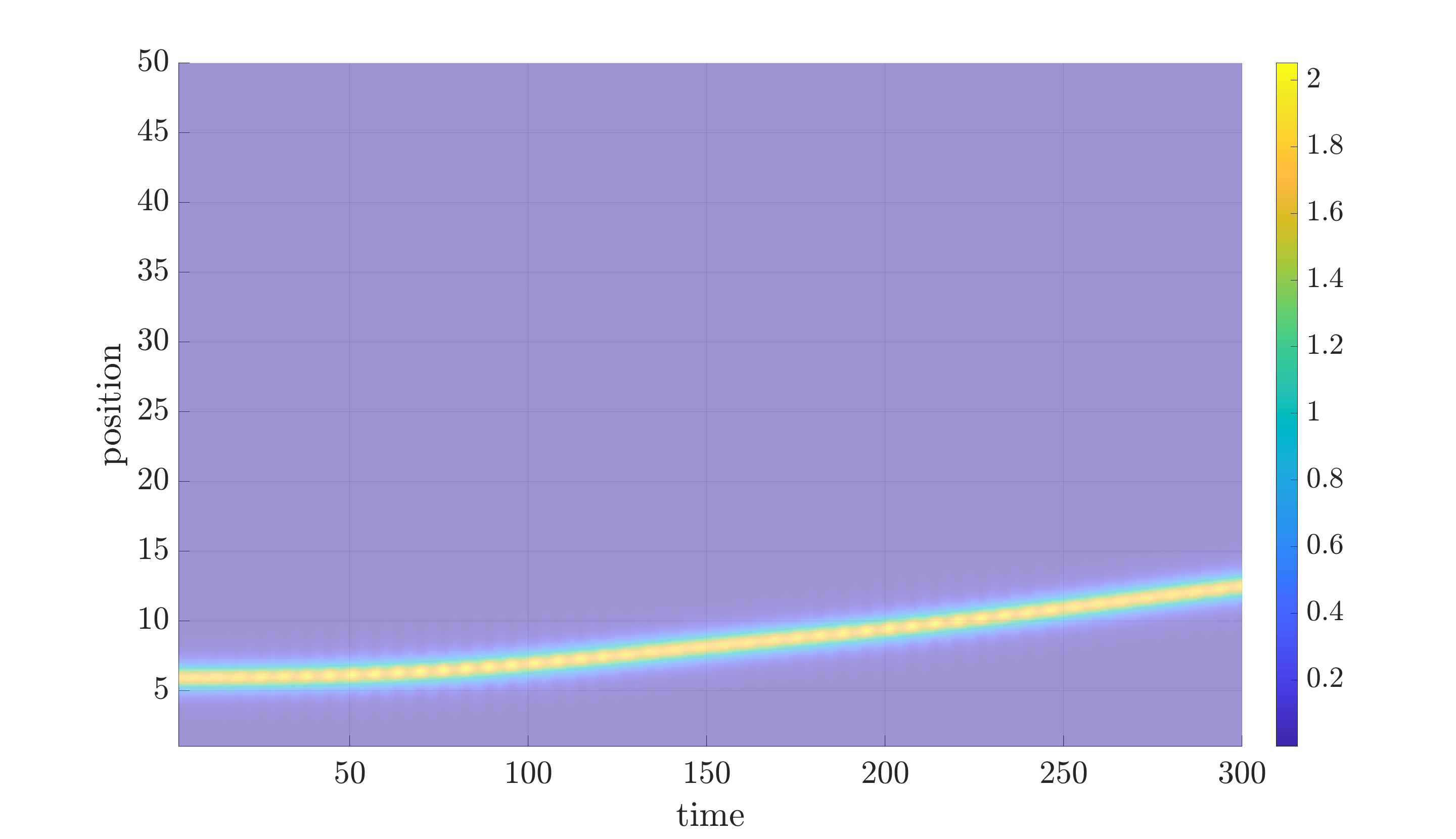}}  \quad
\subfloat[][\emph{$x_0=L/9$}]
{\includegraphics[width=.45\textwidth]{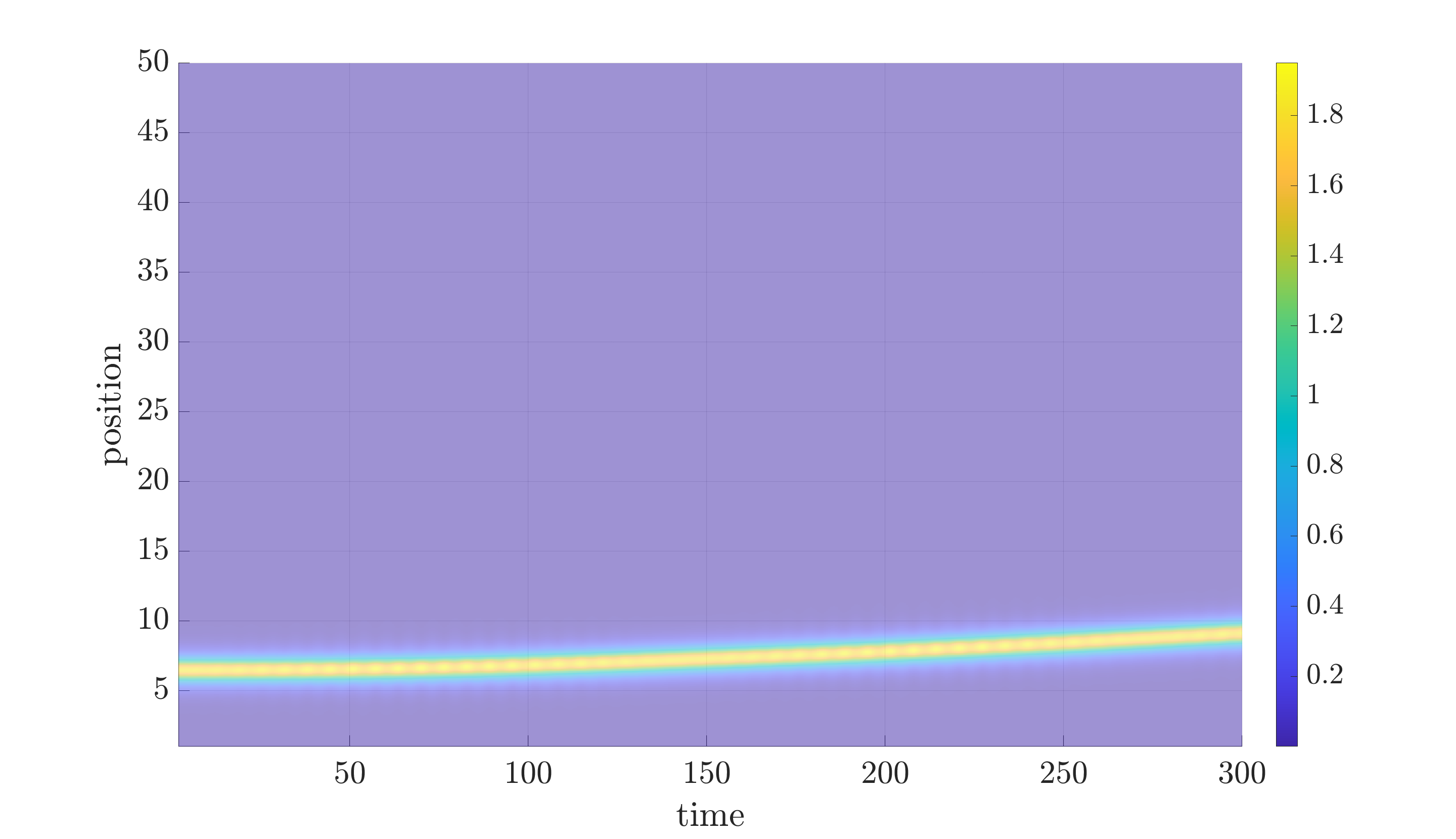}} \quad 
\subfloat[][\emph{$x_0=L/8$}]
{\includegraphics[width=.45\textwidth]{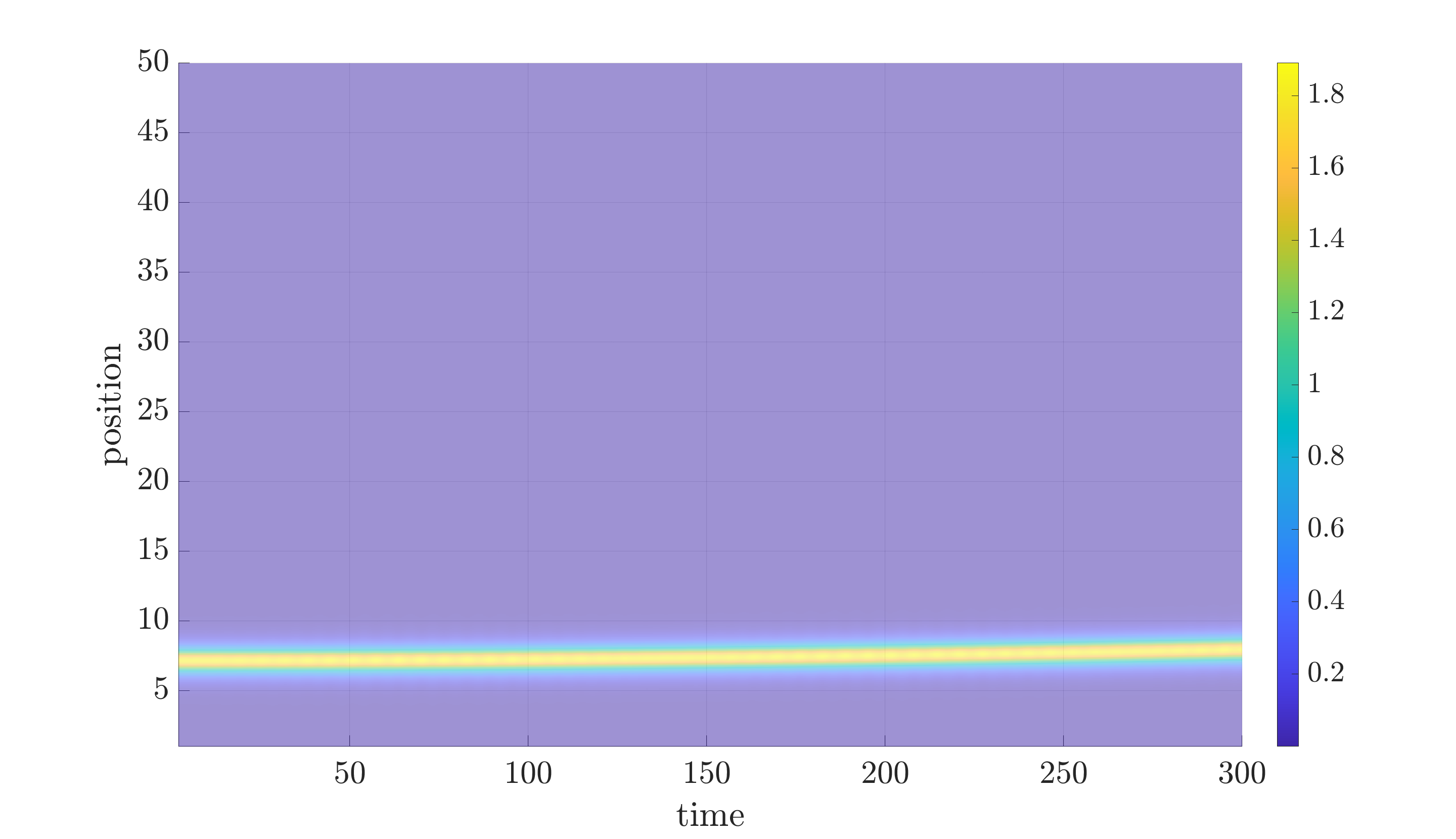}} \quad
\subfloat[][\emph{$x_0=L/2$}]
{\includegraphics[width=.45\textwidth]{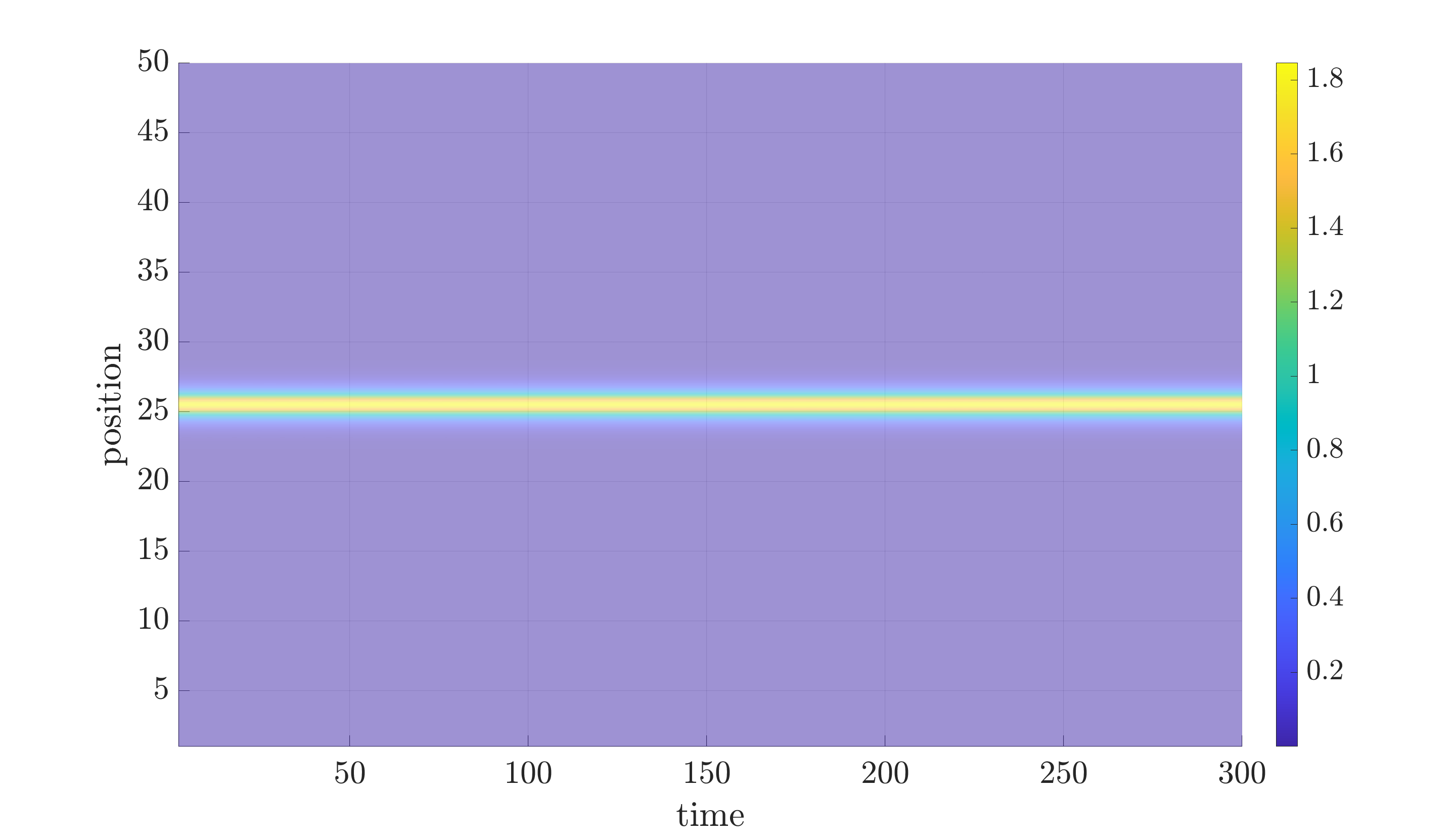}} \\
\caption{We plot  the amplitude $|u(x,t)|^2$ of the solution to the NLS equation \eqref{NLS_eq} with $p=5$, as a function of space and time. The initial datum is a soliton which is located on a single edge of the graph at distance $x_0$ from the vertex and with zero incoming velocity. Only the edge where the soliton is confined is displayed. As before, the position of the vertex corresponds to the origin of the vertical axis. The location of the initial datum is varied from $x_0=L/10$ (the closest to the vertex) to $x_0=L/2$ (the furthest from the vertex). We observe how the repulsive effect diminishes as $x_0$ increases, i.e. as we move away from the compact part of the graph. }
\label{sub_graphs_2}
\end{figure}
\begin{figure}
\centering
\subfloat[][\emph{$x_0=0.875 L$}]
{\includegraphics[width=.45\textwidth]{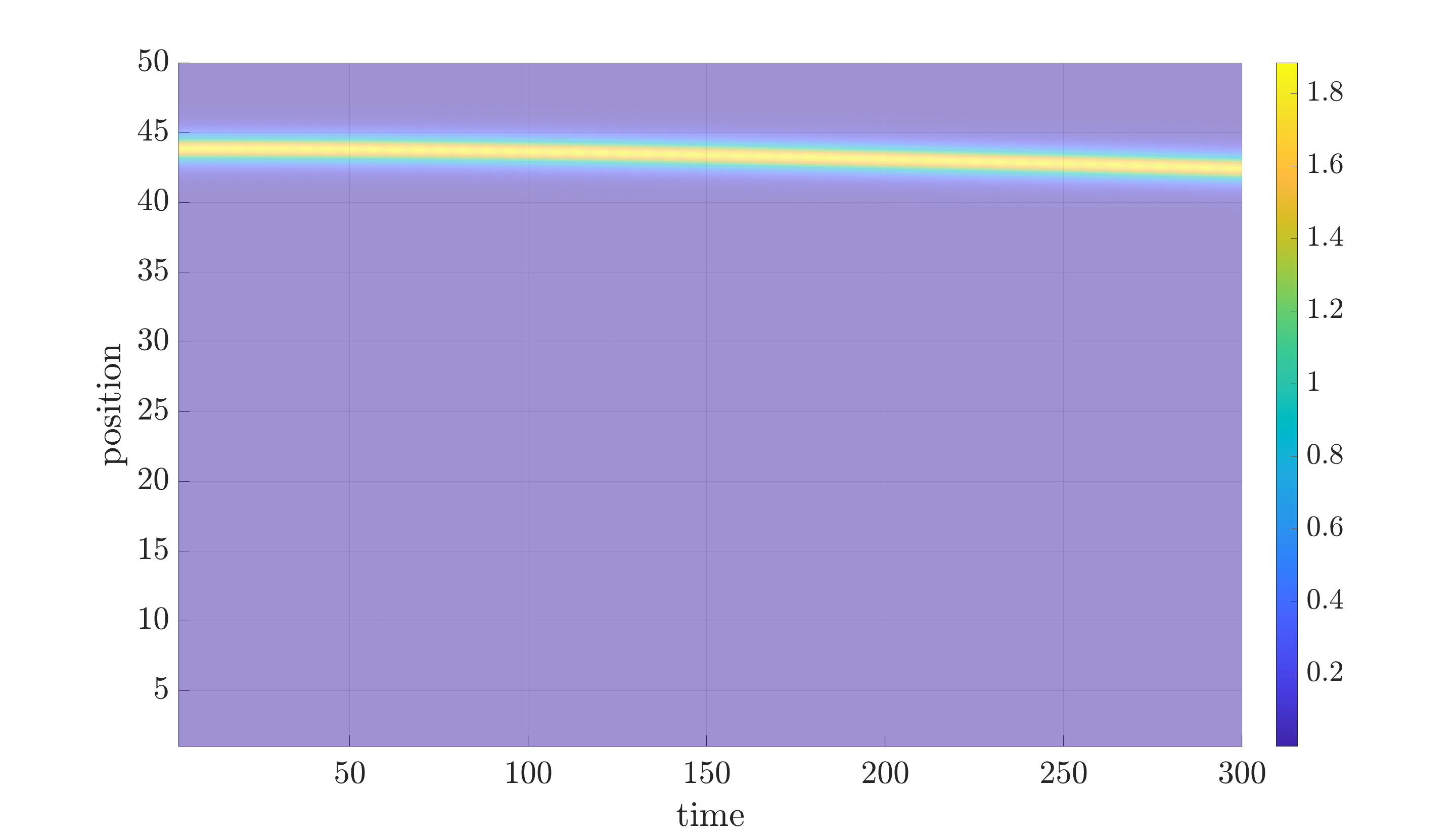}} \quad
\subfloat[][\emph{$x_0=0.9 L$}]
{\includegraphics[width=.45\textwidth]{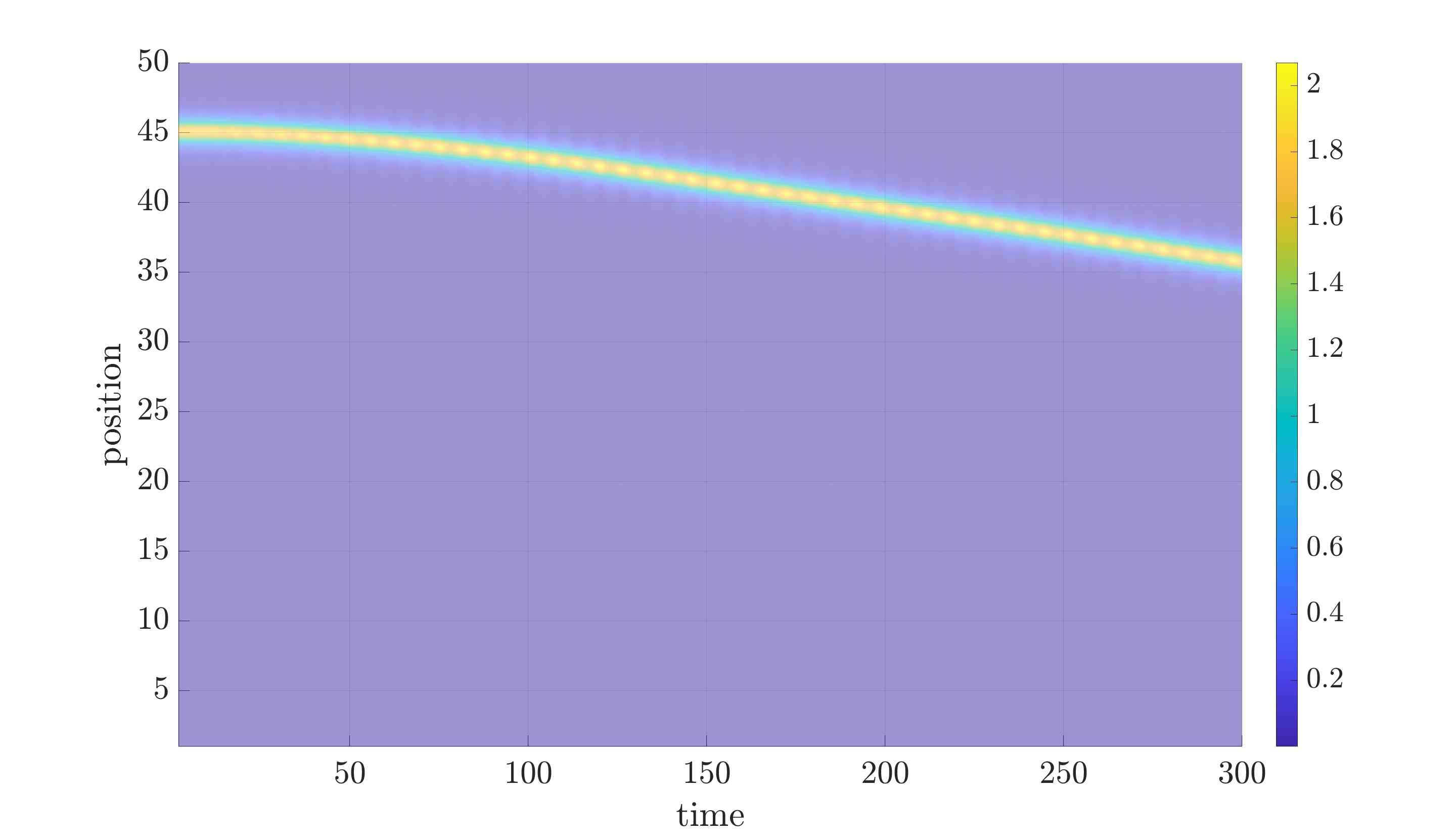}} \\
\caption{Similar plot as in Figure \ref{sub_graphs_2}. In (A) the initial datum is centered in $x_0 = 0.875L$ and in (B) it is centered in $x_0=0.9L$. In both cases we observe the presence of boundary effects, which push the soliton away from the free end of the edge. }
\label{sub_graphs_3}
\end{figure}
\begin{figure}
\centering
\subfloat[][\emph{$x_0=0.875 L$ \ with \ $v=-0.1$}]
{\includegraphics[width=\textwidth]{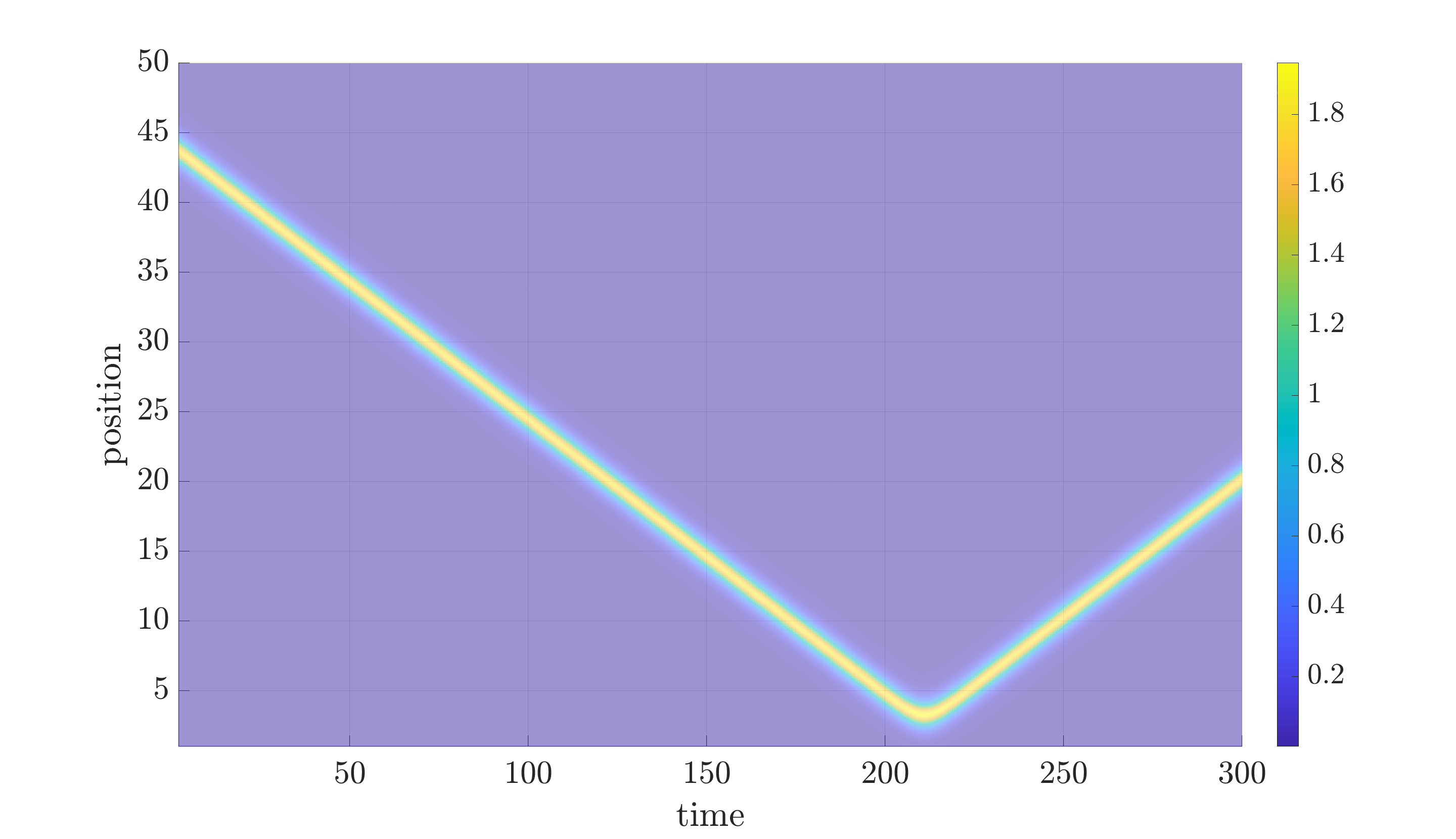}} \\
\caption{Similar plot as in Figure \ref{sub_graphs_2}. This time the initial datum corresponds to a soliton located at distance $x_0 = 0.875L$ from the vertex and with incoming velocity $v=-0.1$. We observe that the soliton proceeds in uniform motion towards the vertex of the graph. It is then reflected by the collision with the vertex. In this case, the presence of boundary effects is less apparent.}
\label{sub_graphs_4}
\end{figure}

\end{document}